\documentclass[reqno]{amsart}
\usepackage{amssymb}
\usepackage{amsmath}
\usepackage{amsthm}
\usepackage{mathtools}
\usepackage[svgnames]{xcolor}
\usepackage[pdftex]{hyperref}
\usepackage{enumitem}
\usepackage{booktabs}
\usepackage{placeins }
\usepackage{soul}

\usepackage{tikz}
\usepackage{tkz-tab}
\usepackage{tkz-fct}  
\usetikzlibrary{calc,intersections}
\definecolor{color1}{RGB}{27,158,119}
\definecolor{color2}{RGB}{217,95,2}
\definecolor{color3}{RGB}{117,112,179}
\definecolor{color4}{RGB}{231,41,138}

\graphicspath{{./figures/}}

\usepackage{vmargin}
\setmarginsrb {2cm}{2cm}{2cm}{2cm} {1cm}{1cm}{0.5cm}{0.5cm}


\hypersetup{
  pdftitle={},
  pdfauthor={Perla Kfoury, Stefan Le Coz <slecoz@math.univ-toulouse.fr>, Tai-Peng Tsai},
  pdfsubject={},
  pdfkeywords={}, 
  colorlinks=true,
  linkcolor=DarkBlue  ,          
  citecolor=DarkRed,        
  filecolor=DarkMagenta,      
  urlcolor=DarkGreen,           
}

\newtheorem{theorem}{Theorem}[section]
\newtheorem{proposition}[theorem]{Proposition}
\newtheorem{lemma}[theorem]{Lemma}

\theoremstyle{remark}

\DeclarePairedDelimiter{\norm}{\lVert}{\rVert}



\newcommand{\eps}{\varepsilon}


\newcommand{\R}{\mathbb{R}}


\renewcommand{\leq}{\leqslant}
\renewcommand{\geq}{\geqslant}

\DeclareMathAlphabet{\mathpzc}{OT1}{pzc}{m}{it}

\usepackage{color}


\begin{document}

\title[Stability of standing waves of the double power 1d NLS]{Analysis of stability and instability for standing waves of the double power one dimensional nonlinear Schr\"odinger equation}

\author[P.~Kfoury]{Perla Kfoury}

\author[S.~Le Coz]{Stefan Le Coz}
\thanks{The work of P. K. and S. L. C. is 
  partially supported by ANR-11-LABX-0040-CIMI and CNRS IEA  296038}

\author[T.-P.~Tsai]{Tai-Peng Tsai}
\thanks{The work of T.-P. T. is partially supported by the NSERC grant RGPIN-2018-04137.}

\address[Perla Kfoury \& Stefan Le Coz]{Institut de Math\'ematiques de Toulouse ; UMR5219,
  \newline\indent
  Universit\'e de Toulouse ; CNRS,
  \newline\indent
  UPS IMT, F-31062 Toulouse Cedex 9,
  \newline\indent
  France}
\email[Perla Kfoury]{perla.kfoury@math.univ-toulouse.fr}
\email[Stefan Le Coz]{slecoz@math.univ-toulouse.fr}

\address[Tai-Peng Tsai]{
Department of Mathematics,
\newline\indent
University of British Columbia,
\newline\indent
Vancouver BC
\newline\indent
Canada V6T 1Z2}
\email[Tai-Peng Tsai]{ttsai@math.ubc.ca}

\subjclass[2010]{35Q55 (35B35)}

\date{\today}
\keywords{nonlinear Schr\"odinger equation, double power nonlinearity, standing waves, stability, orbital stability}

\begin{abstract}
For the double power one dimensional nonlinear Schr\"odinger equation, we establish a complete classification of the stability or instability of standing waves with positive frequencies. In particular, we fill out the gaps left open by previous studies. Stability or instability follows from the analysis of the slope criterion of Grillakis, Shatah and Strauss. The main new ingredients in our approach are a reformulation of the slope and the explicit calculation of the slope value in the zero-frequency case. Our theoretical results are complemented with numerical experiments.
\end{abstract}

\maketitle


\section{Introduction}
\label{sec:introduction}

Consider the one dimensional nonlinear Schr\"odinger equation with double power nonlinearity
\begin{equation} \label{eq:nls}
i\partial_t u + \partial_x^2 u + a_p|u|^{p-1}u+a_q|u|^{q-1}u =0,
\end{equation}
where $u:\R_t \times \R_x \to \mathbb C$, $a_p,a_q \in\mathbb R\setminus\{0\}$ and  $1<p<q<\infty$. When $a_p<0$, $a_q>0$, we say that the nonlinearity is \emph{defocusing-focusing}, with analogous definitions for other possible signs combinations. 

Nonlinear Schr\"odinger equations appear in many areas of physics such as nonlinear optics (see e.g.~\cite{Ag07}) or Bose-Einstein condensation. The double power nonlinearity is an important example of the possible nonlinearities appearing in soliton theory (see e.g.~\cite{AkAnGr99}). Via gauge transformations, the double power nonlinearity is also connected with the derivative nonlinear Schr\"odinger equation (see e.g.~\cite{HaOz92,LeWu18,Ph21}). The double power nonlinearity is also a typical example of a nonlinearity breaking the scaling invariance of the pure power case, while still being relatively tractable, and it may be used to study phenomena in the absence of scaling symmetry  (see e.g.~\cite{LeMaRa16} for the construction of blowing-up solutions).

The Cauchy problem for \eqref{eq:nls} is well known (see \cite{Ca03} and the references therein) to be well-posed in the energy space $H^1(\mathbb R)$: for any $u_0\in H^1(\mathbb R)$, there exists a unique maximal solution $u\in C((-T_*,T^*),H^1(\R)) \cap C^1((-T_*,T^*),H^{-1}(\R))$ of \eqref{eq:nls} such that $u(t=0)=u_0$. Moreover, the energy $E$ and the mass $M$, defined by
\[
E(u)=\frac12\norm{u_x}_{L^2}^2-\frac{a_p}{p+1}\norm{u}_{L^{p+1}}^{p+1}-\frac{a_q}{q+1}\norm{u}_{L^{q+1}}^{q+1},\quad M(u)=\frac12\norm{u}_{L^2}^2,
 \] 
are conserved along the flow and the blow-up alternative holds (i.e.~if $T^*<\infty$ (resp. $T_*<\infty$), then $\lim_{t\to T^* \text{ (resp }-T_*{\text{)}}}\norm{u(t)}_{H^1}=\infty$). 

A \emph{standing
wave} is a solution of \eqref{eq:nls} of the form $u(t,x) = e^{i \omega t}\phi(x) $ for some
$\omega \in \R$ and a \emph{profile} $\phi \in C^2(\R)$, which then satisfies
\begin{equation} \label{eq:ode}
-\phi'' +\omega \phi -a_p|\phi|^{p-1}\phi -a_q|\phi|^{q-1}\phi=0.
\end{equation}
We only consider real-valued $\phi$ in this paper.
Define $\omega^*$ by
\[
\omega^*=\sup\left\{\omega\geq 0: \exists s>0 \text{ such that }\frac\omega2s^2 -\frac{a_p}{p+1}s^{p+1} -\frac{a_q}{q+1}s^{q+1}<0\right\}.
\]

It is well known (see \cite{BeLi83-1}) that  existence of non-trivial solutions of \eqref{eq:ode}
with $\lim_{|x|\to\infty}\phi(x)=0$ holds if and only if
\[
  \begin{cases}
    0\leq \omega <\omega^* & \text{when }a_p<0,a_q>0,\\
    0< \omega <\omega^* & \text{otherwise}.
  \end{cases}
\]
In that case, the solution is positive (up to phase shift), even (up to translation) and unique. We denote it by $\phi_\omega$, or simply $\phi$ when there is no ambiguity.

Solitary waves are the building blocks for the nonlinear dynamics of \eqref{eq:nls}, as it is expected that, generically, a solution of \eqref{eq:nls} will decompose into a dispersive linear part and a combination of nonlinear structures as solitary waves. This vague statement is usually referred to as the \emph{Soliton Resolution Conjecture}.

Therefore, understanding the dynamical properties of standing waves, in particular their stability, is a key step in the analysis of the dynamics of \eqref{eq:nls}.  Several stability concepts are available for standing waves. The most commonly used is \emph{orbital stability}, which is defined as follows. The standing wave $e^{i\omega t}\phi(x)$ solution of \eqref{eq:nls} is said to be \emph{orbitally stable} if the following holds. For any $\eps>0$ there exists $\delta>0$ such that if $u_0\in H^1(\R)$ verifies
\[
\norm{u_0-\phi}_{H^1}<\delta,
\]
then the associated solution $u$ of \eqref{eq:nls} exists globally and verifies 
\[
\sup_{t\in\R}\inf_{y\in\R,\theta\in\R}\norm*{u(t)-e^{i\theta}\phi(\cdot-y)}_{H^1}<\eps.
  \]
In the rest of this paper, when we talk about stability/instability, we always mean \emph{orbital} stability/instability.

  The groundwork for orbital stability studies was laid down by Berestycki and Cazenave \cite{BeCa81}, Cazenave and Lions \cite{CaLi82} and Weinstein \cite{We83,We85}. Two approaches lead to stability or instability results: the variational approach of \cite{BeCa81,CaLi82}, which exploits global variational characterizations combined with conservation laws or the virial identity, and the spectral approach of \cite{We83,We85}, which exploits spectral and coercivity properties of linearized operators to construct a suitable Lyapunov functional.  Later on, Grillakis, Shatah and Strauss \cite{GrShSt87,GrShSt90} developed an abstract theory which, under certain assumptions, boils down the stability study of a branch of standing waves $\omega\to \phi_{\omega}$  to the study of the sign of the quantity (usually called \emph{slope})
\(
\frac{\partial}{\partial \omega} M(\phi_{\omega}).
\)
Note that the theory of Grillakis, Shatah and Strauss has known recently a considerable revamping in the works of De Bièvre, Genoud and Rota-Nodari \cite{DeGeRo15,DeRo19}.

With the above mentioned techniques, the orbital stability of positive standing waves has been completely determined in the single power case (i.e.~$a_q=0$) in any dimension $d\geq1$ in \cite{BeCa81,CaLi82, We83,We85}. If $a_q=0$, positive standing waves exist if and only if $a_p>0$ and $\omega>0$. In this case, they are stable if $1<p<1+\frac4d$ (i.e.~$1<p<5$ in dimension $d=1$), and they are unstable if $1+\frac4d\leq p<1+\frac{4}{(d-2)_+}$ (i.e.~$5\leq p<\infty$ in dimension $d=1$). Scaling properties of the single power nonlinearity play an important role in the proof and ensure in particular that stability and instability are independent of the value of the frequency $\omega$. It turns out that there is no scaling invariance for double power nonlinearities, which makes the stability study more delicate. As a matter of fact, only very partial results are available so far in higher dimensions. In dimension $1$, the situation is a bit more favorable, as one might exploit the ODE structure of the profile equation \eqref{eq:ode} in the analysis.

Preliminary investigations for the stability of standing waves in dimension $1$ were conducted by Iliev and Kirchev \cite{IlKi93} in the case of a generic nonlinearity. In particular, a formula for the slope condition was obtained in \cite{IlKi93}. 
The earliest work devoted to the stability of standing waves for nonlinear Schr\"odinger equations with double power nonlinearity in dimension $1$ is the work of Ohta \cite{Oh95}. In this work, using the integral expression for the slope condition derived by Iliev and Kirchev \cite{IlKi93}, Ohta established the stability/instability of standing waves in a number of cases. Later on, Maeda \cite{Ma08} further refined the approach of Ohta and established the stability/instability in most of the situations not covered in \cite{Oh95}. However, the stability picture was still not complete, as the following case was left partially open:
\[
a_p<0,\quad a_q>0,\quad  1<p<q<5,\quad p+q\leq 6\text{ or }p\leq \frac73.
\]
In the above case, Ohta \cite{Oh95} established the stability of standing waves for $\omega$ large enough. The instability for small $\omega$ was obtained by Ohta \cite{Oh95} for $p+q> 6$, a condition which was later improved to
$(p+3)(q+3)>32$
by Fukaya and Hayashi \cite{FuHa21}.
What happens in the intermediate range of $\omega$ when 
\[
(p+3)(q+3)>32\quad \text{and}\quad \left(p+q\leq6 \text{ or } p\leq \frac73\right),
\]
was not elucidated in \cite{FuHa21, Ma08,Oh95}, nor what happens for small $\omega$ when $(p+3)(q+3)\leq32$,
(except for the notable case $p=2$, $q=3$, where explicit calculations  are possible  and show that the wave is stable for any $\omega>0$).

For convenience, we adopt the following convention. When a standing  wave is stable for any $\omega\in(0,\omega^*)$, we say that it is of type S. When there exists $\omega_1\in(0,\omega^*)$ such that the standing wave is unstable for $\omega\in(0,\omega_1]$ and stable for $\omega\in(\omega_1,\omega^*)$, we say that it is of type US.
Other types are defined in a similar manner. Note that when instability holds the endpoint $\omega_1$ is included in the instability range (thanks to the criterion of Comech and Pelinovsky \cite{CoPe03}, see \eqref{eq:comech-pelinovsky}).

Our goal in this paper is to fill out the gaps left open by the previous works \cite{FuHa21, Ma08,Oh95} and to provide a complete stability picture for the standing waves of the Schr\"odinger equations with double power nonlinearity.
Our main result is the following.

\begin{theorem}
  \label{thm:main}
  Let $(\phi_{\omega})_{\omega\in(0,\omega^*)}$ be the family of standing waves of \eqref{eq:nls}. The following gives the stability type of the family of standing waves.
  \begin{enumerate}
  \item Assume that $a_p>0$ and $a_q>0$.
    \begin{enumerate}
    \item If $q\leq 5$, then it is of type $S$.
    \item If $p\geq 5$, then it is of type $U$.
    \item If $p<5<q$, then it is of type $SU$.   
    \end{enumerate}
  \item Assume that $a_p>0$ and $a_q<0$.
    \begin{enumerate}
    \item If $p\leq 5$, then it is of type $S$.
    \item If $p> 5$, then it is of type $US$.
    \end{enumerate}
  \item Assume that $a_p<0$ and $a_q>0$.
       \begin{enumerate}
    \item If $q\leq 7-2p$, then it is of type $S$.   
    \item If $7-2p< q< 5$, then it is of type $US$.
       \item If $q\geq 5$, then it is of type $U$.
    \end{enumerate}
  \end{enumerate}
In particular, in the cases 1(c), 2(b) and 3(b) with stability change, the standing wave $\phi_{\omega_c}$ at the critical frequency $\omega_c$ is unstable.
\end{theorem}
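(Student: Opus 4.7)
\emph{Reduction to the slope criterion.} My starting point would be the Grillakis-Shatah-Strauss framework as revisited by De Bièvre, Genoud and Rota-Nodari \cite{DeGeRo15,DeRo19}. Since each $\phi_\omega$ is a positive even ground state, the linearized operator around $\phi_\omega$ has the usual spectral structure (one simple negative eigenvalue, kernel spanned by $\phi_\omega'$, positive essential spectrum), and orbital stability is equivalent to $\partial_\omega M(\phi_\omega)>0$ while strong (hence orbital) instability is equivalent to $\partial_\omega M(\phi_\omega)<0$. The critical frequency where the slope vanishes is then covered by the criterion of Comech and Pelinovsky \cite{CoPe03}. The theorem thus reduces to determining the sign of the slope on $(0,\omega^*)$, or on $[0,\omega^*)$ in case 3.

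\emph{Reformulation of the slope.} Multiplying \eqref{eq:ode} by $\phi_\omega'$ and integrating yields the first integral $(\phi_\omega')^2 = F_\omega(\phi_\omega)$ with $F_\omega(s)=\omega s^2-\tfrac{2a_p}{p+1}s^{p+1}-\tfrac{2a_q}{q+1}s^{q+1}$. Using evenness and monotonicity of $\phi_\omega$, the change of variable $s=\phi_\omega(x)$ together with $s_\omega=\phi_\omega(0)$ gives
\[
M(\phi_\omega)=\int_0^{s_\omega}\frac{s^2}{\sqrt{F_\omega(s)}}\,ds.
\]
Differentiating in $\omega$ under the integral sign and absorbing the boundary singularity at $s_\omega$ (a simple zero of $F_\omega$) via a Pohozaev-type integration by parts produces an alternative integral expression for $\partial_\omega M(\phi_\omega)$ whose sign is much easier to read off than in the Iliev-Kirchev form \cite{IlKi93}. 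This is the reformulation announced in the abstract and the engine of the subsequent analysis.

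\emph{Boundary signs and the zero-frequency formula.} I would then evaluate the sign of the slope at the endpoints of the $\omega$-interval. As $\omega\to\omega^*$, a rescaling of $\phi_\omega$ near its maximum reduces to single-power asymptotics already used in \cite{Ma08,Oh95}; in cases 1 and 2 the behaviour as $\omega\to 0^+$ is classical, since one nonlinearity dominates after scaling. The truly new ingredient is case 3 ($a_p<0<a_q$), where $\omega=0$ is admissible: $F_0(s)=-\tfrac{2a_p}{p+1}s^{p+1}-\tfrac{2a_q}{q+1}s^{q+1}$ factors as a difference of two pure powers, so the integrals defining $M(\phi_0)$ and the reformulated slope can be computed in closed form via Beta/Gamma functions. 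The outcome is that $\partial_\omega M(\phi_\omega)\big|_{\omega=0}$ carries the sign of $7-2p-q$; this produces exactly the threshold separating 3(a) from 3(b)/3(c), and by continuity forces instability in a neighborhood of $\omega=0$ whenever $7-2p<q$, closing the small-$\omega$ gap left in \cite{FuHa21,Ma08,Oh95}.

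\emph{From boundary signs to the stability type, and the main obstacle.} The last task is to promote these endpoint signs to a global sign pattern on the whole parameter interval. In the monotone cases (1a, 1b, 2a, 3a, 3c) a scaling or monotonicity argument applied to the reformulated integrand forces $\partial_\omega M(\phi_\omega)$ to have a constant sign, so the type follows at once. The cases with a stability change (1c, 2b and above all 3b) require showing that the slope vanishes exactly once. I would attempt this either by showing that, after an appropriate rescaling, the reformulated slope is monotone in $\omega$, or by deriving a differential inequality combining the Pohozaev-type identity with the computations of \cite{Ma08} that rules out a second sign change. The single-zero statement in case 3(b) is, in my view, the main obstacle: it is precisely the obstruction that prevented \cite{FuHa21,Ma08,Oh95} from closing the gap, and it seems to genuinely require the new slope formula and the explicit value at $\omega=0$ to work in tandem.
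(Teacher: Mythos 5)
Your overall architecture matches the paper's: reduction to the sign of $\partial_\omega M(\phi_\omega)$ with the Comech--Pelinovsky criterion at a degenerate frequency, the first-integral substitution $s=\phi_\omega(x)$ giving $M(\phi_\omega)=\int_0^{\phi_0}s^2 F_\omega(s)^{-1/2}ds$, endpoint asymptotics, and — crucially — the closed-form Beta/Gamma evaluation at $\omega=0$ in the defocusing--focusing case yielding the sign of $7-2p-q$. That last identification is exactly the paper's Lemma \ref{lemJ0sign} and is the right new endpoint ingredient. (One small technical divergence: the paper avoids your ``Pohozaev-type integration by parts at the moving endpoint'' by first rescaling $t=\phi_0 s$ onto the fixed interval $[0,1]$ and eliminating $\omega$ via \eqref{eq:omega_in_function_of_phi_0}, after which differentiation under the integral sign is painless and yields the clean form \eqref{eq:F(phi_0)}.)

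The genuine gap is the step you yourself flag as the main obstacle: proving that the slope changes sign at most once in cases 1(c), 2(b) and 3(b). You offer two possible strategies (``show the rescaled slope is monotone'' or ``derive a differential inequality'') but carry out neither, and without this step the theorem is not proved — endpoint signs alone cannot exclude types such as SUS or USU. The paper's resolution is a specific and nontrivial device: write $J=C(\phi_0)\phi_0^{-\gamma}F_\gamma(\phi_0)$ with $F_\gamma(\phi_0)=\int_0^1 I_\gamma\,s\,ds$, compute $\partial_{\phi_0}I_\gamma$ pointwise in $s$ (Lemma \ref{lem:derivative_of_I}), and choose $\gamma$ case by case ($\gamma=\frac{p+1}2$, $\frac{q+1}2$, or $p-q+3$) so that the numerator becomes either a single term of definite sign or a quadratic in $\Phi_q/\Phi_p$ whose discriminant factors as $-8(5-p)(2p-3q+5)(p-q)^2<0$, forcing positivity. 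This gives monotonicity of the \emph{integrand} for every $s$, hence of $F_\gamma$, on the whole interval (cases 2(b), 3(a), 3(b)) or on a half-line $(0,\phi_{0,2})$ which must then be matched against an explicit half-line $(\phi_{0,1},\infty)$ on which $F<0$, with the overlap $\phi_{0,1}<\phi_{0,2}$ verified algebraically (case 1(c)). None of this is foreseeable from your sketch, and it is precisely where the previous literature stalled; your proposal correctly identifies the obstruction but does not overcome it. A secondary imprecision: at $\omega=0$ the slope is finite only for $p<7/3$; for $p\geq 7/3$ it diverges to $-\infty$, which is consistent with the sign of $7-2p-q$ but requires the separate argument of Lemma \ref{lemJ0=infty}.
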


This theorem implies in particular that stability change occurs at most once, which is conjectured in \cite[p.~265]{Ma08}, and is in contrast to NLS with triple power nonlinearity considered in \cite{LiTsZw21}.

In Theorem \ref{thm:main}, cases (1), (2) and (3)(c) were already covered in \cite{Ma08,Oh95}. For the sake of completeness, and as the proofs are not very long, we will also cover them in our work. Cases (3)(a) and (b) were only partially solved. We provide a definitive result for these cases. Our approach relies on several ingredients. First of all, we express the slope condition in a concise, while easily tractable integral, factoring out terms which are in any case positive. Instead of working with the parameter $\omega$, we manipulate the slope condition  with the parameter $\phi_0=\phi_\omega(0)$ (which is in a  bijective relation with $\omega$). We are left with an integral expression $F(\phi_0)$ (see \eqref{eq:F(phi_0)}), of which we need to determine the sign. A refactorisation allows us to introduce an auxiliary parameter $\gamma$, and differentiation with respect to $\phi_0$ gives us an expression which we can prove to have sign, provided we have suitably chosen the parameter $\gamma$. This gives the information that  $F(\phi_0)$ changes sign at most once. The sign for large $\omega$ (or equivalently large $\phi_0$) had already been established in \cite{Oh95}. On the other hand, the sign for $\omega$ close to $0$ had not been computed before. Here, an astute rewriting of the slope in terms of Beta functions allows us to determine the sign for $\omega$ close to $0$.

Observe that our results are not covering the zero-frequency case $\omega=0$. Stability or instability of the corresponding (algebraic) standing waves (when existing) can be conjectured to be the same as the one for small $\omega>0$ (which is consistent with the results obtained by Fukaya and Hayashi \cite{FuHa21}).

In the case $a_p<0$, $a_q>0$ and $1<p<q<5$, we complement our theoretical results with numerical experiments. We first represent the critical surface $\{\omega_c(p,q)\}$ at which the stability change occurs and discuss the different possible shapes of the surface depending on the ratio $a_p/a_q$. We then simulate the dynamics of \eqref{eq:nls} around a standing wave with the Crank-Nicolson scheme with relaxation of Besse \cite{Be04}. Three types of behaviors are observed depending on the type of initial data : stability, growth followed by oscillations, and scattering. 

To end this introduction, we point out that many works are  devoted to standing waves of  the double power nonlinear Schr\"odinger in higher dimension (for which our approach does not apply), and just give a small sample of the existing literature. The cubic quintic case in higher dimension was investigated in \cite{CaKlSp20}. Stability of standing waves in higher dimension for generic nonlinearities was considered in \cite{Fu03}. Strong instability was studied in 
\cite{OhYa15}. Stability results for algebraic standing waves were obtained in \cite{FuHa21}.
Uniqueness and non-degeneracy was considered in \cite{LeRo20}. Existence or non-existence of minimizers of the energy at fixed mass was obtained in \cite{BeFoGe20}.
Let us also mention in dimension $1$ the work \cite{GeMaWe16}, which is devoted to the stability of standing waves for cubic-quintic nonlinearities in the presence of a $\delta$ potential (see \cite{AnHe19,AnHePl19} for further developments).

This paper is organized as follows. We start by some preliminaries in Section \ref{sec:preliminaries}, recalling in particular the properties of the standing wave profiles and the stability criterion. In Section \ref{sec:slope}, we reformulate the slope condition for stability, using the profile equation. In Section \ref{sec:endpoints}, we analyze the limit of the slope at the endpoints of the interval of admissible frequencies $\omega$ and in particular determine the sign of the slope at the endpoints. The sign of the slope on the full interval of admissible frequencies is recovered in Section \ref{sec:sign}, which shows Theorem \ref{thm:main}. Finally, numerical experiments are presented in Section \ref{sec:numerics}.

After the first version of this paper was posted to arXiv, Professor Hayashi kindly informed us he had an independent similar result and posted it as \cite{Ha21}. His Theorem 1.3 is similar to our Theorem 1.1 although it does not include the case $1<p<9/5$.

\section{Preliminaries}
\label{sec:preliminaries}

\subsection{The profile equation}

We start by some analysis around the ordinary differential equation \eqref{eq:ode} and its solutions $(\phi_\omega)$.
Apart in a few specific cases (e.g.~when $q=2p-1$, see e.g.~\cite{LiTsZw21}), there does not exist an explicit formula for the full standing waves profile.
Note that $\omega^*=\infty$ when $a_q>0$, $0<\omega^*<\infty$ when $a_p>0$ and $a_q<0$, and $\omega^*=-\infty$ (i.e.~there is no solution of \eqref{eq:ode} in $H^1(\R)$) when $a_p,a_q<0$. All along this paper, we assume that $0<\omega<\omega^*$ (excluding in particular the possibility that $a_p,a_q<0$). Under this assumption, there exists $\phi_0>0$ (depending implicitly on $\omega$) such that
\[
\phi_0 =\inf \{ \phi>0: \frac{\omega}{2}\phi^2-\frac{a_p}{p+1}|\phi|^{p+1} -\frac{a_q}{q+1}|\phi|^{q+1}=0\},
\]
and we have
\[
\phi_\omega(0)=\phi_0.
\]
Observe that $\omega$ may be expressed in terms of $\phi_0$ as follows
\begin{equation}
  \label{eq:omega_in_function_of_phi_0}
\omega=\frac{2a_p}{p+1}\phi_0^{p-1} +\frac{2a_q}{q+1}\phi_0^{q-1}.
\end{equation}
Moreover, as $\omega<\omega^*$, we have
\[
  \omega -a_p\phi_0^{p-1} -a_q\phi_0^{q-1}<0.
\]
This implies in particular $\phi_0$  is a $C^1$-function of $\omega$.
Moreover, we always have
  \begin{equation}
    \label{eq:dphidomega}
\frac{\partial \phi_0}{\partial\omega}=\left(\frac{2a_p(p-1)}{p+1}\phi_0^{p-2} +\frac{2a_q(q-1)}{q+1}\phi_0^{q-2}\right)^{-1}>0.
\end{equation}
As a consequence, the following result holds.

\begin{lemma}
  The function $\omega\to \phi_0$ is a strictly increasing bijection from $(0,\omega^*)$ to $(\phi_*,\phi^*)$ where
  \begin{equation}
    \label{eq:formula_of_phi*}
    \phi_*=
    \begin{cases}
      \left( -\frac{a_p}{a_q}{\frac {q+1}{p+1}}\right)^{\frac1{q-p}} &\text{ if }a_p<0,\\
      0 &\text{ if }a_p>0,
    \end{cases}
    \quad
    \phi^*=
    \begin{cases}
      \infty &\text{ if }a_q>0,\\
      \left( -\frac{a_p}{a_q}\frac{p-1}{q-1}{\frac {q+1}{p+1}}\right)^{\frac1{q-p}} &\text{ if }a_q<0.
    \end{cases}
  \end{equation}
\end{lemma}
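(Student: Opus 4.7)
The plan is to extract everything from the identity \eqref{eq:omega_in_function_of_phi_0}, rewritten as $\omega = f(\phi_0)$ with $f(s):=\tfrac{2a_p}{p+1}s^{p-1}+\tfrac{2a_q}{q+1}s^{q-1}$. Strict monotonicity of $\omega\mapsto\phi_0(\omega)$ is already in hand via the positive derivative \eqref{eq:dphidomega}. Since the map is also $C^1$, hence continuous, its image on $(0,\omega^*)$ is an open interval $(\phi_*,\phi^*)$, and the task reduces to computing the two endpoint limits $\phi_*=\lim_{\omega\to 0^+}\phi_0(\omega)$ and $\phi^*=\lim_{\omega\to\omega^*}\phi_0(\omega)$.

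The first useful observation is the reformulation of $\omega^*$. From $V_\omega(s):=\tfrac{\omega}{2}s^2-\tfrac{a_p}{p+1}s^{p+1}-\tfrac{a_q}{q+1}s^{q+1}$ one sees that, for $s>0$, $V_\omega(s)<0$ if and only if $\omega<f(s)$, hence
\[
\omega^*=\sup_{s>0} f(s).
\]
Thus the qualitative behavior of $f$ dictates the picture. I would carry out a quick case analysis: if $a_p>0$ then $f(0^+)=0^+$, while if $a_p<0$ (forcing $a_q>0$) the function $f$ vanishes at a unique positive $s=\phi_*$, obtained by solving $f(\phi_*)=0$ as $\phi_*^{q-p}=-\tfrac{a_p(q+1)}{a_q(p+1)}$; similarly, if $a_q>0$ then $f$ is unbounded and $\omega^*=\infty$, while if $a_q<0$ the function $f$ has a unique positive maximum $\phi^*$, determined by $f'(\phi^*)=0$, which gives exactly $\phi^{*\,q-p}=-\tfrac{a_p(p-1)(q+1)}{a_q(q-1)(p+1)}$.

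Next I would identify which root of $f(\phi)=\omega$ corresponds to $\phi_0$. The inequality $\omega-a_p\phi_0^{p-1}-a_q\phi_0^{q-1}<0$ recalled in the excerpt is precisely the statement $f'(\phi_0)>0$ (after multiplying by $\phi_0>0$ and comparing with $f'$), so $\phi_0$ always lies on the increasing branch of $f$, i.e.\ between the positive zero of $f$ (or $0$, if $a_p>0$) and the positive maximizer of $f$ (or $+\infty$, if $a_q>0$). Since $\omega\mapsto\phi_0$ is strictly increasing and continuous and maps onto the connected set of such solutions, $\phi_0(\omega)$ sweeps exactly this interval; taking $\omega\to 0^+$ and $\omega\to\omega^*$ gives the endpoints $\phi_*$ and $\phi^*$ computed above, matching \eqref{eq:formula_of_phi*}.

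The main (small) obstacle is this branch-identification step: the equation $f(\phi_0)=\omega$ can have two positive roots (when $a_p$ and $a_q$ have opposite signs), and one must rule out the larger root as being the physical $\phi_0$. The inequality $\omega<a_p\phi_0^{p-1}+a_q\phi_0^{q-1}$ does exactly this job, so the argument is essentially routine once that comparison is unpacked; the remaining computations are only the algebra already indicated.
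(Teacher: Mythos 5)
Your argument is correct and is essentially the route the paper intends (the lemma is stated there as a direct consequence of the preceding discussion): strict monotonicity comes from the positivity of $\partial\phi_0/\partial\omega$ in \eqref{eq:dphidomega}, and the endpoints are read off from $\omega=\tfrac{2a_p}{p+1}\phi_0^{p-1}+\tfrac{2a_q}{q+1}\phi_0^{q-1}$ by locating the positive zero of that expression (for $\phi_*$ when $a_p<0$) and its critical point (for $\phi^*$ when $a_q<0$). Your additional observations --- that $\omega^*=\sup_{s>0}f(s)$ and that the inequality $\omega-a_p\phi_0^{p-1}-a_q\phi_0^{q-1}<0$ is exactly $f'(\phi_0)>0$, pinning $\phi_0$ to the increasing branch --- correctly make explicit what the paper leaves implicit.
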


\subsection{The stability criterion}
\label{sec:stab-crit}

As we already mentioned, stability criteria have been derived in the general case in \cite{GrShSt87,IlKi93}. For the double power nonlinearity, the stability of the standing wave is determined by a slope condition (the spectral condition of  \cite{GrShSt87} being always verified in this case when $\omega>0$).
The standing wave $e^{i\omega t}\phi_\omega(x)$ will be stable if
\[
  \frac{\partial}{\partial \omega}M(\phi_\omega)>0,
\]
and it will be unstable if 
\[
    \frac{\partial}{\partial \omega}M(\phi_\omega)<0.
\]
When $  \partial_\omega M(\phi_\omega)=0$, the stability can be decided by looking at the second derivative, as was established by Comech and Pelinovsky \cite{CoPe03}: If  $  \partial_\omega M(\phi_\omega)=0$ and
\begin{equation}
 \label{eq:comech-pelinovsky}
    \frac{\partial^2}{\partial \omega^2}M(\phi_\omega)\neq 0,
\end{equation}
  then the standing  wave $e^{i\omega t}\phi_\omega(x)$ is unstable.

  \section{Reformulation of the slope}
\label{sec:slope}

For notational convenience, we introduce the function $J$ defined by
\[
J(\omega,p,q)=  \frac{\partial}{\partial \omega}M(\phi_\omega).
\]
Hence the sign of $J$ determines the stability of the corresponding standing wave. 


The main idea in this section is to express $J$ in terms of $\phi_0$ instead of $\omega$. Before doing that, we introduce some convenient notation. 
Let $\Phi_p$ and $\Phi_q$ be defined by
\begin{equation} \label{eq:phi_p,phi_q}
\Phi_p=\frac{2a_p}{p+1}\phi_0^{p+1}(1-s^{p-1}), \quad \Phi_q=\frac{2a_q}{q+1}\phi_0^{q+1}(1-s^{q-1}),
\end{equation}
where $a_p,a_q \neq 0,$ $1<p<q<\infty$ and $0<s<1.$
\begin{lemma}
  \label{lem:J=CF}
The function $J$ may be expressed in terms of $\phi_0$ as follows
\[
J(\omega,p,q)=C(\phi_0)F(\phi_0),
\]
where 
\begin{equation} \label{eq:F(phi_0)}
F(\phi_0)=\int_0^1 \frac{(5-p)\Phi_p+(5-q)\Phi_q}{(\Phi_p+\Phi_q)^{\frac{3}{2}}}s ds,
\end{equation}
and $C(\phi_0)$ is positive and explicitly known (see \eqref{newC}).
\end{lemma}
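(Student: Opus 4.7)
The plan is to switch from $\omega$ to the parameter $\phi_0$ and apply the chain rule, exploiting that $d\phi_0/d\omega>0$ by \eqref{eq:dphidomega}. Multiplying the profile equation \eqref{eq:ode} by $\phi'$ and integrating, using $\phi,\phi'\to 0$ at infinity, yields the first integral
\[
(\phi')^2 = \omega\phi^2 - \frac{2a_p}{p+1}\phi^{p+1} - \frac{2a_q}{q+1}\phi^{q+1}.
\]
Evaluating at $x=0$ recovers \eqref{eq:omega_in_function_of_phi_0}; eliminating $\omega$ from the first integral with this relation, and substituting $s=\phi/\phi_0$, leads to the compact identity $(\phi')^2=s^2(\Phi_p+\Phi_q)$, with $\Phi_p,\Phi_q$ as in \eqref{eq:phi_p,phi_q}.

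Since $\phi_\omega$ is even, positive, and strictly decreasing on $(0,\infty)$ (so $\phi'<0$ there), I would then use $s$ as a new variable of integration via $dx = -\phi_0\,ds/(s\sqrt{\Phi_p+\Phi_q})$ to obtain
\[
M(\phi_\omega) = \int_0^\infty \phi^2\,dx = \phi_0^3 \int_0^1 \frac{s}{\sqrt{\Phi_p+\Phi_q}}\,ds.
\]
A direct calculation from \eqref{eq:phi_p,phi_q} gives $\partial_{\phi_0}\Phi_p = (p+1)\Phi_p/\phi_0$ and $\partial_{\phi_0}\Phi_q = (q+1)\Phi_q/\phi_0$. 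Differentiating under the integral sign then produces
\[
\partial_{\phi_0} M = 3\phi_0^2 \int_0^1 \frac{s(\Phi_p+\Phi_q)}{(\Phi_p+\Phi_q)^{3/2}}\,ds - \frac{\phi_0^2}{2}\int_0^1 \frac{s\bigl((p+1)\Phi_p+(q+1)\Phi_q\bigr)}{(\Phi_p+\Phi_q)^{3/2}}\,ds,
\]
and the coefficients combine as $6-(p+1)=5-p$ and $6-(q+1)=5-q$, so that $\partial_{\phi_0} M = \tfrac{\phi_0^2}{2}F(\phi_0)$. The chain rule together with \eqref{eq:dphidomega} finally yields $J=C(\phi_0)F(\phi_0)$ with
\[
C(\phi_0) = \frac{\phi_0^2}{2}\left(\frac{2a_p(p-1)}{p+1}\phi_0^{p-2} + \frac{2a_q(q-1)}{q+1}\phi_0^{q-2}\right)^{-1},
\]
which is positive precisely because the parenthesized quantity is positive (this is exactly \eqref{eq:dphidomega}).

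The only real technical step is to justify the change of variable and differentiation under the integral near $s=1$, where $\Phi_p+\Phi_q$ vanishes. A Taylor expansion at $s=1$ gives $\Phi_p+\Phi_q \sim c(\phi_0)(1-s)$ with $c(\phi_0)>0$ (this positivity is a restatement of $\omega<\omega^*$), so $s/\sqrt{\Phi_p+\Phi_q}$ behaves like $(1-s)^{-1/2}$, hence is integrable; the two separate integrals arising from Leibniz's rule individually behave like $(1-s)^{-3/2}\cdot(1-s)=(1-s)^{-1/2}$ at the endpoint, so both converge absolutely and the interchange of $\partial_{\phi_0}$ with the integral is legitimate. The remainder of the argument is purely algebraic.
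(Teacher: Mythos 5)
Your proposal is correct and follows essentially the same route as the paper: derive the first integral $(\phi')^2=\omega\phi^2-\frac{2a_p}{p+1}\phi^{p+1}-\frac{2a_q}{q+1}\phi^{q+1}$, change variables to $s=\phi/\phi_0$, eliminate $\omega$ via \eqref{eq:omega_in_function_of_phi_0} to write $M(\phi)=\phi_0^3\int_0^1 s(\Phi_p+\Phi_q)^{-1/2}ds$, and differentiate using $\partial_{\phi_0}\Phi_p=(p+1)\Phi_p/\phi_0$ (the paper differentiates directly in $\omega$, which is the same computation up to the chain-rule factor $\partial_\omega\phi_0$, and your $C(\phi_0)$ agrees with \eqref{newC}). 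Your extra verification of integrability near $s=1$ and of the legitimacy of differentiating under the integral sign is a welcome addition that the paper leaves implicit.
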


\begin{proof}
We multiply the equation \eqref{eq:ode}  of the profile by $\phi_x$ and we integrate to obtain
\[
-\frac{1}{2} |\phi_x|^2+\frac{\omega}{2} |\phi|^2 -\frac{a_p}{p+1}|\phi|^{p+1}-\frac{a_q}{q+1}|\phi|^{q+1}=c.
\]
When $x \rightarrow \infty $, we know that $\phi(x) \rightarrow 0$ and $\phi'(x) \rightarrow 0$.
Therefore $c=0$, and
\begin{equation} \label{eq:firstintegral}
-\frac{1}{2} |\phi_x|^2+\frac{\omega}{2} |\phi|^2 -\frac{a_p}{p+1}|\phi|^{p+1}-\frac{a_q}{q+1}|\phi|^{q+1}=0.
\end{equation}
For $x>0$, as $\phi$ is decreasing, from \eqref{eq:firstintegral} we have 
\[
\phi_x=-\sqrt{\omega\phi^2-\frac{2a_p}{p+1}\phi^{p+1}-\frac{2a_q}{q+1}\phi^{q+1}}.
\]
Still for $x>0$, let $t=\phi(x)$, then
\[
dx=\frac{dt}{\phi_x}=-\frac{dt}{\sqrt{\omega\phi^2-\frac{2a_p}{p+1}\phi^{p+1}-\frac{2a_q}{q+1}\phi^{q+1}}}.
\]
Therefore we may perform the following change of variable:
  \begin{equation*}
    M(\phi)=\frac{1}{2} \int_{\R} |\phi(x)|^2 dx=\int_0^{\infty} |\phi(x)|^2 dx =\int_0^{\phi_0}\frac{t^2}{\sqrt{\omega t^2-\frac{2a_p}{p+1}t^{p+1}-\frac{2a_q}{q+1}t^{q+1}}}dt.
  \end{equation*}
Changing again variable by setting $t=\phi_0 s$, we have
\[
    M(\phi)=\int_0^1\frac{\phi_0^3 s^2}{s\sqrt{\omega\phi_0^2-\frac{2a_p}{p+1}\phi_0^{p+1}s^{p-1}-\frac{2a_q}{q+1}\phi_0^{q+1}s^{q-1}}}ds.
\]
Replacing $\omega$ by its value \eqref{eq:omega_in_function_of_phi_0} in terms of $\phi_0$, we have
\[
    M(\phi)=\int_0^1\frac{\phi_0^3 s}{\sqrt{\frac{2a_p}{p+1}\phi_0^{p+1}+\frac{2a_q}{q+1}\phi_0^{q+1}-\frac{2a_p}{p+1}\phi_0^{p+1}s^{p-1}-\frac{2a_q}{q+1}\phi_0^{q+1}s^{q-1}}}ds,
\]
which, using the notation \eqref{eq:phi_p,phi_q} for $\Phi_p$ and $\Phi_q$, gives 
\[
M(\phi)=\int_0^1\frac{\phi_0^3 s}{\sqrt{\Phi_p+\Phi_q}}ds.
\]
Differentiating with respect to $\omega$, we have
\[
\partial_{\omega}\Phi_p=(p+1)\Phi_p \phi_0^{-1} \partial_{\omega}\phi_0,\quad \partial_{\omega}\Phi_q=(q+1)\Phi_q \phi_0^{-1 }\partial_{\omega}\phi_0.
\]
Therefore we obtain
\begin{align*}
J(\omega,p,q)=\partial_\omega M(\phi)&=\int_0^1 \frac{3\phi_0^2\partial_\omega\phi_0 s (\Phi_p+\Phi_q) -\frac12 \phi_0^3 s \left( (p+1)\Phi_p \phi_0^{-1}+(q+1)\Phi_q\phi_0^{-1})\right) \partial_\omega\phi_0 }{(\Phi_p+\Phi_q)^{\frac{3}{2}}}ds,\\
&= \frac{\partial_\omega\phi_0}{2}\phi_0^2 \int_0^1 \frac{6(\Phi_p+\Phi_q)-\left( (p+1)\Phi_p+(q+1)\Phi_q)\right)}{(\Phi_p+\Phi_q)^{\frac{3}{2}}}s ds,\\
&= \frac{\partial_\omega\phi_0}{2}\phi_0^2 \int_0^1 \frac{(5-p)\Phi_p+(5-q)\Phi_q}{(\Phi_p+\Phi_q)^{\frac{3}{2}}}s ds,\\
&=C(\phi_0)F(\phi_0),
\end{align*}
where $F(\phi_0)$ is defined in \eqref{eq:F(phi_0)} and 
\begin{equation} \label{newC}
C(\phi_0)= \frac{\partial_\omega\phi_0}{2}\phi_0^2.
\end{equation}
This concludes the proof.
\end{proof} 




We will now analyze the variations of $J(\omega,p,q)$ in terms of $\phi_0$. For future convenience (the reason for such a choice will appear clearly later), we introduce an auxiliary parameter $\gamma$ in the following way
 \[
J(\omega,p,q)=C(\phi_0)\phi_0^{-\gamma} F_{\gamma}(\phi_0),
\]
where 
\[
F_{\gamma}(\phi_0)=\int_0^1\phi_0^\gamma\left( \frac{(5-p)\Phi_p+(5-q)\Phi_q }{\left(   \Phi_p+\Phi_q
       \right)^{\frac32}}\right )sds.
\]
Denote the integrand of $F_{\gamma}$ by
  \begin{equation}
    \label{eq:def-I-gamma}
I_{\gamma}(\phi_0)=\phi_0^\gamma\left( \frac{(5-p)\Phi_p+(5-q)\Phi_q }{\left(   \Phi_p+\Phi_q \right)^{\frac32}}\right ).
\end{equation}
Observe that there is a implicit dependency in $s$. In the following lemma we differentiate $I_{\gamma}(\phi_0)$ with respect to $\phi_0$.

\begin{lemma} \label{lem:derivative_of_I}
For any $0<s<1$, the following holds:
\[
\frac{\partial I_{\gamma}}{\partial_{\phi_0}}= \frac12\phi_0^{\gamma-1}
 \left( \frac{\left((5-p)(2\gamma-(p+1))\Phi_p+(5-q)(2\gamma-(q+1))\Phi_q\right)(\Phi_p+\Phi_q)-3(q-p)^2\Phi_p\Phi_q
       }{\left(   \Phi_p+\Phi_q
         \right)^{\frac52}}\right ).
\]
\end{lemma}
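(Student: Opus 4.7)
The plan is to carry out the differentiation directly from the definitions and then reorganize the resulting expression via a single algebraic identity.

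First, I would treat $s\in(0,1)$ as a fixed parameter and observe from \eqref{eq:phi_p,phi_q} that
\[
\partial_{\phi_0}\Phi_p = \frac{p+1}{\phi_0}\Phi_p, \qquad \partial_{\phi_0}\Phi_q = \frac{q+1}{\phi_0}\Phi_q.
\]
Writing $N=(5-p)\Phi_p+(5-q)\Phi_q$ and $D=\Phi_p+\Phi_q$, so that $I_\gamma = \phi_0^\gamma N D^{-3/2}$, the product and chain rules yield
\[
\partial_{\phi_0}I_\gamma = \phi_0^{\gamma-1}D^{-5/2}\Bigl[\gamma N D + D\bigl((5-p)(p+1)\Phi_p+(5-q)(q+1)\Phi_q\bigr) - \tfrac{3}{2}N\bigl((p+1)\Phi_p+(q+1)\Phi_q\bigr)\Bigr].
\]
This is routine; the only point to keep track of is the $s$-dependence hiding in $\Phi_p,\Phi_q$, which does not enter the differentiation.

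Next, I would massage the bracketed expression to match the target form. The claimed numerator is
\[
\tfrac{1}{2}\bigl[(5-p)(2\gamma-(p+1))\Phi_p+(5-q)(2\gamma-(q+1))\Phi_q\bigr]D - \tfrac{3}{2}(q-p)^2\Phi_p\Phi_q,
\]
and splitting the factor $2\gamma-(p+1)$ gives $\gamma N D - \tfrac{1}{2}\bigl((5-p)(p+1)\Phi_p+(5-q)(q+1)\Phi_q\bigr)D - \tfrac{3}{2}(q-p)^2\Phi_p\Phi_q$. Comparing with the output of the direct differentiation reduces everything to the single identity
\[
\bigl((5-p)(p+1)\Phi_p+(5-q)(q+1)\Phi_q\bigr)D - \bigl((5-p)\Phi_p+(5-q)\Phi_q\bigr)\bigl((p+1)\Phi_p+(q+1)\Phi_q\bigr) = -(q-p)^2\Phi_p\Phi_q.
\]
I would verify this by expansion: the $\Phi_p^2$ and $\Phi_q^2$ contributions cancel, and the $\Phi_p\Phi_q$ coefficient collapses to $\bigl((5-p)-(5-q)\bigr)\bigl((p+1)-(q+1)\bigr) = (q-p)(p-q) = -(q-p)^2$.

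Overall the proof is a direct computation with essentially no obstacle; the only mildly subtle point is recognizing that the right reorganization involves introducing the combinations $2\gamma-(p+1)$ and $2\gamma-(q+1)$ so that the $\gamma$-dependent part factors out cleanly. This choice of grouping is what makes the eventual sign analysis of $\partial_{\phi_0}I_\gamma$ tractable for a well-chosen $\gamma$, and it motivates the specific form of the lemma's statement.
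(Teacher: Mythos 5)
Your proposal is correct and follows essentially the same route as the paper: both compute $\partial_{\phi_0}\Phi_p=(p+1)\Phi_p/\phi_0$, $\partial_{\phi_0}\Phi_q=(q+1)\Phi_q/\phi_0$, apply the product/quotient rules, and reduce the result to the cross-term identity whose $\Phi_p\Phi_q$ coefficient collapses to $-(q-p)^2$. The only difference is organizational (you differentiate $\phi_0^\gamma N D^{-3/2}$ in one step and isolate a single identity, whereas the paper first differentiates $ND^{-3/2}$ and then folds in the $\gamma$ term), which does not change the substance.
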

\begin{proof}
We start by differentiating the term in parenthesis in $I_{\gamma}(\phi_0)$. We have
\[
\partial_{\phi_0}\Phi_p=(p+1)\Phi_p \phi_0^{-1},\quad \partial_{\phi_0}\Phi_q=(q+1)\Phi_q \phi_0^{-1 }.
\]
Therefore, we have
\begin{multline*}
\partial_{\phi_0}\left( \frac{(5-p)\Phi_p+(5-q)\Phi_q }{\left(   \Phi_p+\Phi_q \right)^{\frac32}}\right ), \\ =\phi_0^{-1}\left( \frac{\left((5-p)(p+1)\Phi_p+(5-q)(q+1)\Phi_q\right) \left(\Phi_p+\Phi_q \right)-\frac32 \left((5-p)\Phi_p+(5-q)\Phi_q\right)\left( (p+1)\Phi_p+(q+1)\Phi_q \right) }{\left(   \Phi_p+\Phi_q \right)^{\frac52}}\right ), \\
=\frac12 \phi_0^{-1}\left( \frac{-(5-p)(p+1)\Phi_p^2-(5-q)(q+1)\Phi_q^2 +((5-p)(2p-3q-1)+(5-q)(2q-3p-1))\Phi_p\Phi_q}{\left(   \Phi_p+\Phi_q \right)^{\frac52}}\right ),\\
=\frac12\phi_0^{-1}\left( \frac{-\left((5-p)(p+1)\Phi_p+(5-q)(q+1)\Phi_q\right)(\Phi_p+\Phi_q)-3(q-p)^2\Phi_p\Phi_q }{\left(   \Phi_p+\Phi_q \right)^{\frac52}}\right ).
\end{multline*}
Before going on, observe that we may rewrite the term in parentheses in $I_{\gamma}(\phi_0)$ as
\[
\frac{(5-p)\Phi_p+(5-q)\Phi_q }{\left(\Phi_p+\Phi_q \right)^{\frac32}}=
 \frac{2\left((5-p)\Phi_p+(5-q)\Phi_q \right) \left(   \Phi_p+\Phi_q \right)}{ 2\left(   \Phi_p+\Phi_q \right)^{\frac52}}.
\]
Finally, the full derivative of $I_{\gamma}(\phi_0)$ is given by
\begin{multline*}
\frac{\partial I_{\gamma}}{\partial \phi_0} =\partial_{\phi_0}\left(\phi_0^\gamma\left( \frac{(5-p)\Phi_p+(5-q)\Phi_q }{\left(   \Phi_p+\Phi_q
       \right)^{\frac32}}\right)\right),\\
=\frac12\phi_0^{\gamma-1}
 \left( \frac{\left((5-p)(2\gamma-(p+1))\Phi_p+(5-q)(2\gamma-(q+1))\Phi_q\right)(\Phi_p+\Phi_q)-3(p-q)^2\Phi_p\Phi_q
       }{\left(   \Phi_p+\Phi_q
         \right)^{\frac52}}\right ).\\
\end{multline*}
This concludes the proof.
\end{proof} 

For future reference, we establish here the following technical lemma which we will use at several occasions.
\begin{lemma}\label{lem:f_bijective}
The function $s \rightarrow \frac{1-s^{q-1}}{1-s^{p-1}}$ is an increasing bijection from $(0,1)$ to $(1,\frac{q-1}{p-1})$.
\end{lemma}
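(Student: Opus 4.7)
The plan is to prove three things about $f(s):=\frac{1-s^{q-1}}{1-s^{p-1}}$: that it extends continuously to the closed interval with the correct boundary values, that it is strictly increasing on $(0,1)$, and then to conclude by the intermediate value theorem that it is a bijection onto $(1,\frac{q-1}{p-1})$.

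First, I would compute the boundary limits. Since $s^{p-1}\to 0$ and $s^{q-1}\to 0$ as $s\to 0^+$, we immediately get $\lim_{s\to 0^+}f(s)=1$. At the other end, $f$ is a $0/0$ indeterminate form, and a single application of L'H\^opital yields
\[
\lim_{s\to 1^-}f(s)=\lim_{s\to 1^-}\frac{-(q-1)s^{q-2}}{-(p-1)s^{p-2}}=\frac{q-1}{p-1}.
\]

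The heart of the argument is the strict monotonicity, and here I would use a change of variable to avoid heavy computation. Set $r=s^{p-1}$ and $\lambda:=\frac{q-1}{p-1}>1$. Then $r$ is a strictly increasing $C^1$-bijection of $(0,1)$ onto itself, and $s^{q-1}=r^{\lambda}$, so
\[
f(s)=g(r),\qquad g(r):=\frac{1-r^{\lambda}}{1-r}.
\]
Hence it suffices to show $g$ is strictly increasing on $(0,1)$. A direct differentiation gives $g'(r)=N(r)/(1-r)^2$ with
\[
N(r)=1+(\lambda-1)r^{\lambda}-\lambda r^{\lambda-1}.
\]
One checks $N(1)=0$ and
\[
N'(r)=\lambda(\lambda-1)r^{\lambda-2}(r-1)<0\quad\text{for }r\in(0,1),
\]
using $\lambda>1$. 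Therefore $N(r)>N(1)=0$ on $(0,1)$, so $g'(r)>0$ and $g$, hence $f$, is strictly increasing.

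Combining the two: $f$ is continuous and strictly increasing on $(0,1)$ with boundary limits $1$ and $\frac{q-1}{p-1}$, so by the intermediate value theorem it is a bijection from $(0,1)$ onto $(1,\frac{q-1}{p-1})$. I don't anticipate a real obstacle here; the only mild subtlety is that $p$ and $q$ are not assumed to be integers, which is precisely why one prefers either the L'H\^opital argument at the endpoint or the reduction to the single-parameter function $g(r)$, rather than a geometric-series rewriting.
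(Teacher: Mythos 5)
Your proof is correct and follows essentially the same route as the paper's: both establish the boundary values by direct evaluation at $0^+$ and L'H\^opital at $1^-$, and both prove strict monotonicity by showing that the numerator of the derivative vanishes at the right endpoint and has negative derivative on the open interval. The only difference is your substitution $r=s^{p-1}$, which collapses the paper's two-parameter auxiliary function $l(s)=(q-p)s^{q-1}+p-1-(q-1)s^{q-p}$ into the one-parameter $N(r)=1+(\lambda-1)r^{\lambda}-\lambda r^{\lambda-1}$ with $\lambda=\frac{q-1}{p-1}$ --- a tidy simplification of the bookkeeping, not a change of method.
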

\begin{proof}
Let $h(s)=\frac{1-s^{q-1}}{1-s^{p-1}}$.
We have 
\[
h'(s) = \frac{s^{p-2}}{(1-s^{p-1})^2}l(s),
\]
where
\[
l(s) = (q-p) s^{q-1} + p-1 - (q-1)s^{q-p}.
\]
Note that $l(1)=0$ and for $0<s<1$,
\[
l'(s) = (q-p)(q-1)( s^{q-2} - s^{q-p-1})<0.
\]
Hence $l'(s)<0$ and $l(s)>0$ for $0<s<1$. We conclude that $h'(s)>0$ for $0<s<1$.
As a consequence, $h$ is increasing on the interval $(0,1)$. Moreover, we have $h(0)=1$ and, by L'Hospital's rule,
\[
\lim_{s \to 1} h(s) = \frac{q-1}{p-1}.
\]
This concludes the proof.
\end{proof}

\section{The slope at the endpoints}
\label{sec:endpoints}

Our goal in this section is to investigate what happens for $J(\omega,p,q)$ when $\omega$ is close to $0$ and $\omega^*$. 

\subsection{The zero frequency case}

In this section, we determine the limit of $J(\omega,p,q)$ when $\omega$ tends to zero.
Let $J_0$ be defined by 
\[
J_0(p,q)=\lim_{\omega \to 0} J(\omega,p,q).
\]

We first consider the case where $a_p>0$.


\begin{proposition}
  \label{prop:sign_J_0_a_p_positive}
  Let $a_p>0$. The following holds.
  \begin{enumerate}
  \item If $1<p<\frac{7}{3}$, then $J_0(p,q)=0^+$.
  \item If $p=\frac{7}{3}$, then $0<J_0(p,q)<\infty$.
  \item If $\frac{7}{3}<p<5$, then $J_0(p,q)=\infty$.
  \item If $p=5$, then three cases have to be distinguished.
    \begin{enumerate}
    \item If $q<9$, then $J_0(p,q)=-\operatorname{sign}(a_q)\infty$.
    \item If $q=9$, then $0<-\operatorname{sign}(a_q) J_0(p,q)<\infty$.
    \item If $q>9$, then $J_0(p,q)=0^{-\operatorname{sign}(a_q)}$.
    \end{enumerate}
  \item If $5<p$, then $J_0(p,q)=-\infty$.
  \end{enumerate}
\end{proposition}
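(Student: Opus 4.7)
The plan is to start from the factorization $J = C(\phi_0)F(\phi_0)$ of Lemma \ref{lem:J=CF} and to extract the leading power of $\phi_0$ in each factor as $\phi_0 \to 0^+$, which is what $\omega \to 0$ translates to when $a_p > 0$ (since then $\phi_* = 0$). From \eqref{eq:dphidomega}, the $\phi_0^{p-2}$ term dominates as $\phi_0\to 0$, giving
\[
\partial_\omega\phi_0 \sim \tfrac{p+1}{2a_p(p-1)}\phi_0^{2-p},
\qquad
C(\phi_0)\sim \tfrac{p+1}{4a_p(p-1)}\phi_0^{4-p},
\]
with a strictly positive leading coefficient. The core of the argument is the asymptotic of $F$.

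Setting $A_p=\tfrac{2a_p}{p+1}>0$, $A_q=\tfrac{2a_q}{q+1}$, and $R(\phi_0,s)=\tfrac{A_q}{A_p}\phi_0^{q-p}(1-s^{q-1})$, one can factor $A_p\phi_0^{p+1}(1-s^{p-1})$ out of the integrand of $F$ to obtain
\[
F(\phi_0)=A_p^{-1/2}\phi_0^{-(p+1)/2}\int_0^1 \frac{(5-p)(1-s^{p-1})+(5-q)R(\phi_0,s)}{\bigl[(1-s^{p-1})+R(\phi_0,s)\bigr]^{3/2}}\,s\,ds.
\]
By Lemma \ref{lem:f_bijective}, $(1-s^{q-1})/(1-s^{p-1})\leq (q-1)/(p-1)$ on $(0,1)$, whence $|R|/(1-s^{p-1})\leq \tfrac{|A_q|}{A_p}\tfrac{q-1}{p-1}\phi_0^{q-p}\to 0$ uniformly in $s$. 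This yields the uniform lower bound $(1-s^{p-1})+R\geq \tfrac12(1-s^{p-1})$ for small $\phi_0$ and an integrable majorant, enabling dominated convergence.

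When $p\neq 5$ the pointwise limit integrand is $(5-p)s(1-s^{p-1})^{-1/2}$, and the change of variable $u=s^{p-1}$ identifies the limit integral with a Beta value, giving $F(\phi_0)\sim (5-p)A_p^{-1/2}I_p\,\phi_0^{-(p+1)/2}$ with $I_p=\tfrac{1}{p-1}\mathrm{B}\!\left(\tfrac{2}{p-1},\tfrac12\right)\in(0,\infty)$. When $p=5$ the leading coefficient vanishes; pulling the factor $\phi_0^{q-p}$ explicitly out of $R$ and applying dominated convergence once more produces $F(\phi_0)\sim (5-q)\tfrac{A_q}{A_p^{3/2}}\,J_{5,q}\,\phi_0^{q-p-(p+1)/2}$, where $J_{5,q}=\int_0^1 s(1-s^{q-1})(1-s^{p-1})^{-3/2}\,ds\in(0,\infty)$; this integral is finite because, near $s=1$, the factor $(1-s^{q-1})\sim(q-1)(1-s)$ offsets two of the three half-powers of $(1-s)$ in the denominator.

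Combining the two asymptotics gives $J\sim K\,\phi_0^{(7-3p)/2}$ when $p\neq 5$ with $\operatorname{sign} K=\operatorname{sign}(5-p)$, while $J\sim K'\,\phi_0^{q-9}$ when $p=5$ with $\operatorname{sign} K'=\operatorname{sign}((5-q)a_q)=-\operatorname{sign}(a_q)$ (because $q>p=5$). Reading off the signs of the exponent of $\phi_0$ and of the leading coefficient in each range of $p$ (and the sub-ranges of $q$ when $p=5$) yields the five assertions. The hard part will be the uniform-in-$s$ control of $R/(1-s^{p-1})$ near $s=1$: both numerator and denominator of this ratio vanish there, so a pointwise bound on $R$ alone would not suffice; the explicit universal bound $(q-1)/(p-1)$ provided by Lemma \ref{lem:f_bijective} is precisely what makes the dominated-convergence argument go through, uniformly in the sign of $a_q$.
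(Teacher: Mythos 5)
Your proposal is correct and follows essentially the same route as the paper: factor $J=C(\phi_0)F(\phi_0)$, extract the leading powers $C(\phi_0)\sim\frac{p+1}{4a_p(p-1)}\phi_0^{4-p}$ and $F(\phi_0)\sim(5-p)\,\mathrm{const}\cdot\phi_0^{-(p+1)/2}$ (with the separate expansion $F(\phi_0)\sim(5-q)a_q\,\mathrm{const}\cdot\phi_0^{q-8}$ when $p=5$), and read off the sign and exponent of $J\sim K\phi_0^{(7-3p)/2}$, resp.\ $K'\phi_0^{q-9}$. The only difference is that you make explicit the dominated-convergence justification of the $o(1)$ terms via the uniform bound from Lemma \ref{lem:f_bijective}, a detail the paper leaves implicit.
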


\begin{proof}
  When $a_p>0$, we have
  \[
\lim_{\omega\to0}\phi_0=\phi_*=0.
\]
Recall that we have shown in Lemma \ref{lem:J=CF} that $J$ may be written as $J(\omega,p,q)=C(\phi_0)F(\phi_0)$. We have (recalling the definition \eqref{newC} of $C$ and the expression \eqref{eq:dphidomega} of $\partial_\omega\phi_0$)
  \begin{multline*}
    C(\phi_0)=\frac12\partial_\omega\phi_0\phi_0^2=\frac14\left(\frac{a_p(p-1)}{p+1}\phi_0^{p-4} +\frac{a_q(q-1)}{q+1}\phi_0^{q-4}\right)^{-1}\\=\phi_0^{4-p}\frac14\left(\frac{a_p(p-1)}{p+1} +\frac{a_q(q-1)}{q+1}\phi_0^{q-p}\right)^{-1}=\phi_0^{4-p}\left(\frac {p+1}{4a_p(p-1)} +o(1)\right).
  \end{multline*}
The function $F$ (defined in \eqref{eq:F(phi_0)}) can be written, substituting $\Phi_p$ and $\Phi_q$ by their expressions \eqref{eq:phi_p,phi_q}, as
\[
  F(\phi_0)=\int_0^1\frac{\frac{2a_p(5-p)}{p+1}(1-s^{p-1})\phi_0^{p+1}+\frac{2a_q(5-q)}{q+1}(1-s^{q-1})\phi_0^{q+1}}{
\left(\frac{2a_p}{p+1}(1-s^{p-1})\phi_0^{p+1}+\frac{2a_q}{q+1}(1-s^{q-1})\phi_0^{q+1}\right)^{\frac32}
  }sds.
\]
As we are interested in the limit $\phi_0\to 0$, we factor out the terms in $\phi_0^{p+1}$ to get
  \begin{multline*}
    F(\phi_0)=\phi_0^{-\frac{p+1}2}\int_0^1\frac{\frac{2a_p(5-p)}{p+1}(1-s^{p-1})+\frac{2a_q(5-q)}{q+1}(1-s^{q-1})\phi_0^{q-p}}{ \left(\frac{2a_p}{p+1}(1-s^{p-1})+\frac{2a_q}{q+1}(1-s^{q-1})\phi_0^{q-p}\right)^{\frac32} }sds
    \\
   = (5-p)\phi_0^{-\frac{p+1}2}\left(\int_0^1 \left(\frac{2a_p}{p+1}(1-s^{p-1})\right)^{-\frac12} sds+o(1)\right).
 \end{multline*}
 In the particular case $p=5$, we instead write
 \[
F(\phi_0)=a_q(5-q) \phi_0^{q-8}\left(\int_0^1 \frac{\frac{2}{q+1} (1-s^{q-1})}{\left(\frac{a_p}{3}(1-s^{4})\right)^{\frac32} } sds+o(1)\right).
   \]
In summary, when $\phi_0\to0$ (i.e.~$\omega\to0$), we have established that there exists $C=C(p,q)>0$ such that when $p\neq 5$ we have 
\[
J(\omega,p,q)=(5-p)\phi_0^{\frac{7-3p}{2}}C(1+o(1)),
\]
and when $p=5$ we have
\[
J(\omega,p,q)=(5-q)a_q\phi_0^{q-9}C(1+o(1)).
\]
This gives the desired result. 
\end{proof}


We now discuss the case $a_p<0$ and $a_q>0$.

\begin{proposition} \label{propJ0}
Let $a_p<0$ and $a_q>0$. 
\begin{enumerate}
   \item Assume that $p<\frac{7}{3}$. Then $J_0(p,q)\in \mathbb R$ and the following holds. 
    \begin{enumerate}
        \item If $2p+q<7$, then $J_0(p,q)>0$.
        \item  If $2p+q=7$, then $J_0(p,q)=0$.
        \item If $2p+q>7$, then $J_0(p,q)<0$.
    \end{enumerate}
  \item Assume that $p\geq \frac{7}{3}$. Then   $J_0(p,q)=-\infty$.
\end{enumerate}
\end{proposition}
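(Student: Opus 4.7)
The plan is to reduce the study of $J_0(p,q)$ to that of $F(\phi_*)$, exploit the algebraic simplification occurring at $\phi_0=\phi_*$, and use Beta function identities. Since $a_p<0$ and $a_q>0$, formula \eqref{eq:formula_of_phi*} gives $\phi_*>0$, and the $\omega=0$ condition in \eqref{eq:omega_in_function_of_phi_0} yields $A := -\frac{2a_p}{p+1}\phi_*^{p+1} = \frac{2a_q}{q+1}\phi_*^{q+1} > 0$. Formula \eqref{eq:dphidomega} stays finite and strictly positive at $\phi_0=\phi_*$, so $C(\phi_*)>0$ in Lemma~\ref{lem:J=CF} and $J_0 = C(\phi_*)F(\phi_*)$. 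Substituting $A$ yields the factorizations $\Phi_p+\Phi_q = A s^{p-1}(1-s^{q-p})$ and $(5-p)\Phi_p+(5-q)\Phi_q = A\bigl[(p-q)+(5-p)s^{p-1}-(5-q)s^{q-1}\bigr]$, hence
\[
F(\phi_*) = A^{-1/2}\int_0^1 \frac{(p-q)+(5-p)s^{p-1}-(5-q)s^{q-1}}{s^{(3p-5)/2}(1-s^{q-p})^{3/2}}\,ds.
\]

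At $s=1$ the numerator vanishes linearly, making the integrand $O((1-s)^{-1/2})$, which is integrable. Near $s=0$ it behaves like $(p-q)s^{(5-3p)/2}/A^{1/2}$, with $p-q<0$; the integrability threshold is $p=\tfrac73$. For $p\geq\tfrac73$ the integral diverges to $-\infty$, so $J_0=-\infty$ and case (2) is proved.

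For case (1), that is $p<\tfrac73$, I make the change of variable $u=s^{q-p}$, reducing to
\[
(q-p)A^{1/2}F(\phi_*) = \int_0^1 \frac{(p-q)u^{\alpha}+(5-p)u^{\alpha+\beta_1}-(5-q)u^{\alpha+\beta_2}}{(1-u)^{3/2}}\,du,
\]
with $\alpha=\tfrac{7-p-2q}{2(q-p)}$, $\beta_1=\tfrac{p-1}{q-p}$, $\beta_2=\tfrac{q-1}{q-p}$. Each monomial integral $\int_0^1 u^a(1-u)^{-3/2}\,du$ diverges at $u=1$ but their linear combination converges because the numerator vanishes there. I would justify the Beta function representation by analytic continuation: insert $(1-u)^{\epsilon-3/2}$, evaluate each piece as a genuine Beta integral for $\epsilon>\tfrac12$, and let $\epsilon\to 0^+$ using the identity $B(x,-\tfrac12)=(1-2x)B(x,\tfrac12)$. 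After collecting coefficients one obtains
\[
(q-p)^2 A^{1/2}F(\phi_*) = -(q-p)(2p+q-7)X_1 - (5-p)(5-q)X_2 + (5-q)(5-2p+q)X_3,
\]
where $X_1=B\bigl(\tfrac{7-3p}{2(q-p)},\tfrac12\bigr)$, $X_2=B\bigl(\tfrac{5-p}{2(q-p)},\tfrac12\bigr)$, $X_3=B\bigl(\tfrac{5-3p+2q}{2(q-p)},\tfrac12\bigr)$.

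The decisive point, and the main obstacle since nothing up to this stage suggests the calculation will simplify, is that the arguments of $X_2$ and $X_3$ differ by exactly $1$. The Beta recursion $B(x+1,\tfrac12)=\tfrac{x}{x+1/2}B(x,\tfrac12)$ applied with $x=\tfrac{5-p}{2(q-p)}$ yields $X_3=\tfrac{5-p}{5-2p+q}X_2$, and the $X_2,X_3$ contributions cancel identically. One is left with
\[
(q-p)A^{1/2}F(\phi_*) = (7-2p-q)X_1.
\]
Since $X_1>0$ when $p<\tfrac73$ (its first argument is positive) and $C(\phi_*), A, q-p>0$, we conclude $\operatorname{sign}(J_0)=\operatorname{sign}(7-2p-q)$, which gives the trichotomy (a)--(c).
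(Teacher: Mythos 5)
Your proof is correct and follows the same global strategy as the paper: pass to the limit $\phi_0\to\phi_*$, use the relation $-\frac{2a_p}{p+1}\phi_*^{p+1}=\frac{2a_q}{q+1}\phi_*^{q+1}$ to collapse $F(\phi_*)$ to a single integral, substitute $u=s^{q-p}$, and watch everything reduce to one Beta function with coefficient $7-2p-q$; the threshold $p=\frac73$ in part (2) is read off from the $s^{(5-3p)/2}$ behaviour at $s=0$ exactly as in the paper. Where you genuinely differ is in the treatment of the non-integrable singularity at $u=1$. The paper first regroups the numerator as $-(q-p)(1-s^{p-1})+(5-q)(s^{p-1}-s^{q-1})$, so that $F(\phi_*)$ splits into two \emph{convergent} integrals, one of them the auxiliary function $H(x,y)=\int_0^1 t^{x-1}(1-t^y)(1-t)^{-3/2}\,dt$, which is then evaluated by integration by parts; the cancellation occurs between the $B(x_1+y_1,\frac12)$ term produced by $H$ and the standalone $B(x_2,\frac12)$ term, via $x_1+y_1=x_2$. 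You instead keep the three raw monomials, each giving a divergent integral, and assign them values by analytic continuation in the exponent of $(1-u)$, using $B(x,-\frac12)=(1-2x)B(x,\frac12)$; your cancellation then happens between $X_2$ and $X_3$ through the recursion $B(x+1,\frac12)=\frac{x}{x+1/2}B(x,\frac12)$. The two computations are the same identity in different groupings, and your coefficients check out. The one step you should spell out is the passage $\epsilon\to0^+$: each regularized piece $B(a_i+1,\epsilon-\frac12)$ has a simple pole at $\epsilon=\frac12$ and is not a convergent integral for $\epsilon\le\frac12$, so you must note that the residues cancel because the coefficients sum to $(p-q)+(5-p)-(5-q)=0$ (equivalently, the numerator vanishes at $u=1$), after which the combination is analytic for $\epsilon>-\frac12$ and agrees with the convergent integral by the identity theorem together with dominated convergence near $u=1$. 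The paper's integration by parts buys you a proof with no continuation argument; your version buys a more mechanical, term-by-term computation once the regularization is justified.
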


We start with some preliminaries. To establish the first part of Proposition \ref{propJ0}, we will calculate $J_0$ in terms of the Beta function. Recall that the Beta function, also called Euler integral of the first kind, is a special function closely related to the Gamma function. It is defined for $x>0$ and $y>0$ by the integral
\begin{equation} \label{betafunction}
B(x,y)=\int_0^1 t^{x-1}(1-t)^{y-1}dt.
\end{equation}
The relation between the Beta function and the Gamma function is given by (see e.g \cite{AbSt64})
\begin{equation*}
B(x,y)=\frac{\Gamma(x)\Gamma(y)}{\Gamma(x+y)}.
\end{equation*}
We introduce the function $H$ defined for $x>0$ and $y>0$ by 
\begin{equation} \label{Hintegrale}
H(x,y)= \int_0^1 \frac{t^{x-1}(1-t^y)}{{(1-t)}^{\frac{3}{2}}}dt.
\end{equation}
The relation between $H$ and $B$ is given in the following lemma.

\begin{lemma} 
For $x>0$ and $y>0$, we have
\begin{equation} \label{Hformula}
H(x,y)=-(2x-1)B\left(x,\frac{1}{2}\right) + (2x+2y-1)B\left(x+y, \frac{1}{2}\right).
\end{equation}
\end{lemma}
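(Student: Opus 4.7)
The plan is to prove \eqref{Hformula} by a single integration by parts that exploits the identity $(1-t)^{-3/2}=2\frac{d}{dt}(1-t)^{-1/2}$. Applying this directly to $H(x,y)$ runs into trouble at $t=0$ when $x$ is small, because $t^{x-1}$ is singular there. To sidestep this, I would instead integrate the exact derivative identity
\[
\frac{d}{dt}\!\left[\frac{t^x(1-t^y)}{\sqrt{1-t}}\right] = \frac{xt^{x-1}(1-t^y)-yt^{x+y-1}}{\sqrt{1-t}} + \frac12\cdot\frac{t^x(1-t^y)}{(1-t)^{3/2}}
\]
over $(0,1)$. For $x,y>0$ the bracketed function vanishes at both endpoints: at $t=0$ because the factor $t^x$ tends to $0$, and at $t=1$ because $1-t^y\sim y(1-t)$ beats $(1-t)^{-1/2}$.

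The integral of the left-hand side is therefore $0$. Recognising the three resulting integrals as $\int_0^1 t^{x-1}(1-t^y)(1-t)^{-1/2}\,dt = B(x,1/2)-B(x+y,1/2)$, $\int_0^1 t^{x+y-1}(1-t)^{-1/2}\,dt = B(x+y,1/2)$, and $\int_0^1 t^{x}(1-t^y)(1-t)^{-3/2}\,dt = H(x+1,y)$, one obtains
\[
H(x+1,y) = -2xB(x,1/2) + 2(x+y)B(x+y,1/2),
\]
valid for all $x,y>0$. Reindexing $x\mapsto x-1$ (for the moment restricting to $x>1$) and using the Beta recurrence, which for $b=1/2$ takes the form $2(a-1)B(a-1,1/2)=(2a-1)B(a,1/2)$ as an immediate consequence of $\Gamma(a)=(a-1)\Gamma(a-1)$, rearranges this into the stated formula \eqref{Hformula}.

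It then remains to extend the identity to $0<x\leq 1$. Here I would invoke analytic continuation in $x$: both sides of \eqref{Hformula} are analytic functions of $x$ on $\{\operatorname{Re}(x)>0\}$ (differentiation under the integral defining $H$ is justified by the integrability of $t^{x-1}|\log t|(1-t^y)(1-t)^{-3/2}$, which still behaves like $t^{x-1}|\log t|$ near $t=0$), they coincide on $\{x>1\}$, and therefore agree on the whole half-plane by the identity theorem. The one point requiring care throughout the argument is precisely the boundary analysis at $t=0$; the deliberate choice of $t^x$ rather than $t^{x-1}$ in the auxiliary function is exactly what makes the boundary term vanish without any restriction beyond $x>0$.
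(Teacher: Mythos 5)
Your proof is correct. It is, at bottom, the same integration by parts as in the paper: your auxiliary function $t^x(1-t^y)(1-t)^{-1/2}$ is exactly $\tfrac12 u(t)v(t)$ for the paper's choices $u(t)=t^{x-1}(1-t^y)$ and $v(t)=2t(1-t)^{-1/2}$, and your observation that it vanishes at both endpoints is the paper's cancellation of the boundary term. The difference is in the bookkeeping. The paper splits $(1-t)^{-3/2}=v'(t)-(1-t)^{-1/2}$ so that the remaining integrals reassemble into $H(x,y)$ itself, and the identity drops out directly for every $x>0$ in one pass. Your organization instead produces the relation
\[
H(x+1,y)=-2x\,B\left(x,\tfrac12\right)+2(x+y)\,B\left(x+y,\tfrac12\right),
\]
which forces two extra steps: the reindexing $x\mapsto x-1$ via the Gamma recurrence $2(a-1)B\left(a-1,\tfrac12\right)=(2a-1)B\left(a,\tfrac12\right)$ (legitimate only for $x>1$, since $B\left(x-1,\tfrac12\right)$ diverges otherwise), and then analytic continuation in $x$ to reach $0<x\leq 1$. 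Both of these steps are carried out correctly --- you rightly flag that the continuation is needed and justify analyticity of both sides on $\{\Re(x)>0\}$ --- so the argument is complete; it simply buys the same conclusion at the cost of some additional machinery that the paper's choice of antiderivative avoids. Note also that the range $0<x\leq 1$ is not academic here: the application in Lemma \ref{lemJ0sign} uses $x_1=\frac{7-3p}{2(q-p)}$, which can be arbitrarily small, so your continuation step is genuinely load-bearing rather than a formality.
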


\begin{proof}
Let 
\[
u(t)=t^{x-1}(1-t^y).
\]
Rewrite 
\[
\frac{1}{(1-t)^{\frac{3}{2}}}= v'(t)-\frac{1}{(1-t)^{\frac{1}{2}}},
\]
where 
\[
v(t)=\frac{2}{(1-t)^\frac{1}{2}}-2(1-t)^{\frac{1}{2}}=\frac{2t}{(1-t)^\frac{1}{2}}.
\]
We have
\begin{align*}
H(x,y)=&\int_0^1u(t)\left(v'(t)-\frac{1}{(1-t)^{\frac{1}{2}}}\right)dt,\\
=& u(1_{-})v(1_{-})-u(0_{+})v(0_{+})-\int_0^1u'(t)v(t)dt -\int_0^1 \frac{u(t)}{(1-t)^{\frac{1}{2}}}dt,\\
=& 0 -\int_0^1 \frac{2tu'(t)+u(t)}{(1-t)^{\frac{1}{2}}}dt.
\end{align*}
Above we have used $u(1)=0$ of order 1 to cancel the singularity of $v(1_-)$ of order $\frac{1}{2}$, and $v(0)=0$ with order $1$ to cancel the singularity of $u(0_+)$ of order $x-1$. 
Note that
\[
2tu'(t)+u(t)=(2x-1)t^{x-1}-(2x+2y-1)t^{x+y-1}.
\]
Therefore, using the definition of $B$ given in \eqref{betafunction} with $y=\frac{1}{2}$, we have
\[
H(x,y)=-(2x-1)B\left(x,\frac{1}{2}\right) + (2x+2y-1)B\left(x+y, \frac{1}{2}\right).
\]
This concludes the proof.
\end{proof}

The value $J_0(p,q)$ may be expressed using $B$ as follows.

\begin{lemma} \label{lemJ0sign}
Let $a_p<0$ and $a_q>0$. Assume that $1 < p < 7/3$. Then
\[
J_0(p,q) = (7-2p-q)C_0 B\left(\frac{7-3p}{2(q-p)}, \frac{
1}{2}\right),
\]
where $C_0$ is a positive constant explicitly known (given by \eqref{C0formula}).
\end{lemma}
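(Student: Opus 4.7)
The plan is to compute the limit of $J(\omega,p,q)=C(\phi_0)F(\phi_0)$ as $\omega\to 0^+$, using the representation from Lemma \ref{lem:J=CF}. Unlike the case $a_p>0$, when $a_p<0$ the value $\phi_0$ tends to $\phi_*>0$ given in \eqref{eq:formula_of_phi*}, so the prefactor $C(\phi_*)$ is finite, and from \eqref{newC} and \eqref{eq:dphidomega} a direct computation shows that $C(\phi_*)>0$. Thus the sign and the value of $J_0$ are governed entirely by $F(\phi_*)$, which is where the Beta function should enter.

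The first algebraic step is to exploit the relation $\frac{2a_p}{p+1}\phi_*^{p+1}+\frac{2a_q}{q+1}\phi_*^{q+1}=0$ coming from $\omega=0$. Setting $A:=\frac{2a_p}{p+1}\phi_*^{p+1}<0$, this rewrites $\Phi_q$ as $-A(1-s^{q-1})$ and produces the clean factorisations $\Phi_p+\Phi_q=|A|\,s^{p-1}(1-s^{q-p})$ and $(5-p)\Phi_p+(5-q)\Phi_q = A\bigl[(5-p)(1-s^{p-1})-(5-q)(1-s^{q-1})\bigr]$.

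Next I substitute $t=s^{q-p}$ to turn $(1-s^{q-p})^{3/2}$ into $(1-t)^{3/2}$, matching the denominator in the definition \eqref{Hintegrale} of $H$. After computing the Jacobian and collecting exponents, $F(\phi_*)$ becomes a constant multiple of $(5-p)H(\alpha,\frac{p-1}{q-p})-(5-q)H(\alpha,\frac{q-1}{q-p})$, where $\alpha:=\frac{7-3p}{2(q-p)}$. The hypothesis $p<7/3$ is precisely what guarantees $\alpha>0$, which in turn ensures convergence of the integral at $t=0$.

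The final and most delicate step is the reduction to a single Beta function. Inserting formula \eqref{Hformula} yields six Beta contributions. The two $B(\alpha,1/2)$ terms combine cleanly with a coefficient proportional to $(7-2p-q)$, producing the expected structure. The remaining four contributions involve $B\bigl(\frac{5-p}{2(q-p)},1/2\bigr)$ and $B\bigl(\frac{5-p}{2(q-p)}+1,1/2\bigr)$: the key observation is that their arguments differ by exactly $1$, so the recurrence $B(x+1,1/2)=\frac{x}{x+1/2}B(x,1/2)$ lets one reduce them to a common Beta value, after which an elementary algebraic computation shows that the four contributions cancel exactly. This cancellation is the main obstacle, as it is the only mechanism by which the answer can end up proportional to a single Beta function rather than a hard-to-sign combination. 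Once it is verified, collecting the positive factors from $C(\phi_*)$, from $|A|^{-1/2}$, and from the Jacobian of the substitution into a constant $C_0>0$ yields the announced formula.
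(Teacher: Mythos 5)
Your argument is correct and follows essentially the paper's route: the same limit computation for $C(\phi_*)$ and $F(\phi_*)$ using the relation $\frac{2a_p}{p+1}\phi_*^{p+1}+\frac{2a_q}{q+1}\phi_*^{q+1}=0$, the same substitution $t=s^{q-p}$, the same key identity \eqref{Hformula}, and the same final collapse to a single Beta function. The only genuine divergence is in how the cancellation is organised. The paper first rewrites the numerator as $-(q-p)(1-s^{p-1})+(5-q)(s^{p-1}-s^{q-1})$, so that $F(\phi_*)$ becomes $C_*^{-1/2}\bigl(-H(x_1,y_1)+\frac{5-q}{q-p}B(x_2,\frac12)\bigr)$ and a single application of \eqref{Hformula} cancels the $B(x_2,\frac12)$ terms on the spot, because $x_1+y_1=x_2$ and $2x_1+2y_1-1=\frac{5-q}{q-p}$. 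You instead keep two $H$-integrals, $(5-p)H(\alpha,\frac{p-1}{q-p})-(5-q)H(\alpha,\frac{q-1}{q-p})$ with $\alpha=\frac{7-3p}{2(q-p)}$, and close with the recurrence $B(x+1,\frac12)=\frac{x}{x+1/2}B(x,\frac12)$; I checked that with $x=\frac{5-p}{2(q-p)}$, hence $x+\frac12=\frac{5-2p+q}{2(q-p)}$, the shifted contributions $(5-p)B(x,\frac12)-(5-2p+q)B(x+1,\frac12)$ do vanish identically, and the surviving terms give $-(2\alpha-1)(q-p)B(\alpha,\frac12)=-(7-2p-q)B(\alpha,\frac12)$, so your mechanism works. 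Two bookkeeping points for a full write-up: there are four Beta contributions, not six (two with argument $\alpha$ and two shifted), and the constant multiplying your pair of $H$'s is negative (it is $A/(|A|^{3/2}(q-p))$ with $A<0$), so the sign of $7-2p-q$ in the final formula emerges only after combining this with the minus sign above --- both approaches buy the same result, the paper's pre-factorisation merely avoids the recurrence.
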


The first part of Proposition \ref{propJ0} is a direct consequence of Lemma \ref{lemJ0sign}.

\begin{proof}[Proof of Lemma \ref{lemJ0sign}]
  Let $1<p<7/3$. 
   Recall that $J(\omega,p,q)=C(\phi_0)F(\phi_0)$, with $C(\phi_0)>0$ and $F$ given by \eqref{eq:F(phi_0)}. 
   Observe that, using the value of $\phi_*$ given in \eqref{eq:formula_of_phi*}, we may introduce the constant
\[
C_*=\frac{2a_q}{q+1}\phi_*^{q+1}=-\frac{2a_p}{(p+1)}\phi_*^{p+1}.
\]
  Using the definition \eqref{newC} of $C(\phi_0)$ and the expression \eqref{eq:dphidomega} of $\partial_\omega\phi_0$, we have
  \[
    \begin{aligned}
      \lim_{\phi_0\to \phi_*}C(\phi_0)&=C(\phi_*)=\frac{\phi_*^5}{2C_*(q-p)}>0,\\
      \lim_{\phi_0\to \phi_*}\Phi_p&=\frac{2a_p}{(p+1)}\phi_*^{p+1}(1-s^{p-1})=-C_*(1-s^{p-1}),\\
      \lim_{\phi_0\to \phi_*}\Phi_q&=\frac{2a_q}{(q+1)}\phi_*^{q+1}(1-s^{q-1})=C_*(1-s^{q-1}).
    \end{aligned}
  \]
  As a consequence, we get
\[
  \begin{aligned}
    \lim_{\phi_0\to \phi_*}(\Phi_p+\Phi_q)&=
    C_*(s^{p-1}-s^{q-1}),\\
      \lim_{\phi_0\to \phi_*}((5-p)\Phi_p+(5-q)\Phi_q)&=
      C_*\left(-(5-p)(1-s^{p-1})+
      (5-q)(1-s^{q-1})\right).
  \end{aligned}
\]
As a consequence,
\begin{equation}
  \label{eq:F_of_phi_*}
\begin{multlined}    F(\phi_*)=C_*^{-\frac12}\int_0^1\frac{-(5-p)(1-s^{p-1})+(5-q)(1-s^{q-1})}{\left(s^{p-1}-s^{q-1}\right)^{\frac32}}sds
    \\= C_*^{-\frac12}\int_0^1\frac{-(q-p)(1-s^{p-1})+(5-q)(s^{p-1}-s^{q-1})}{\left(1-s^{q-p}\right)^{\frac32}} s^{\frac{5-3p}{2}}ds
    \\= C_*^{-\frac12}\left(-(q-p)\int_0^1\frac{(1-s^{p-1})s^{\frac{5-3p}{2}}}{\left(1-s^{q-p}\right)^{\frac32}} ds
      +(5-q)\int_0^1s^{\frac{3-p}{2}}\left(1-s^{q-p}\right)^{-\frac12}ds\right).
  \end{multlined}
\end{equation}
Changing variable $t=s^{q-p}$, we obtain
\[
  F(\phi_*)=   C_*^{-\frac12}\left(-\int_0^1\frac{(1-t^{\frac{p-1}{q-p}})t^{\frac{7-p-2q}{2(q-p)}}}{\left(1-t\right)^{\frac32}} ds
      +\frac{5-q}{q-p}\int_0^1t^{\frac{5+p-2q}{2(q-p)}}\left(1-t\right)^{-\frac12}ds\right).
\]
We now use $B$ and $H$ to express the above quantity. Setting
      \begin{align*}
        (x_1,y_1)&=\left(\frac{7-p-2q}{2(q-p)}+1,\frac{p-1}{q-p}\right)=\left(\frac{7-3p}{2(q-p)}, \frac{p-1}{q-p}\right),\\ 
        (x_2,y_2)&=\left(\frac{5+p-2q}{2(q-p)}+1, \frac{1}{2}\right)=\left(\frac{5-p}{2(q-p)}, \frac{1}{2}\right),
      \end{align*}
      we get
\[
   F(\phi_*)=   C_*^{-\frac12}\left(-H(x_1,y_1)
      +\frac{5-q}{q-p}B(x_2,y_2)\right).
   \]
   Observe that we have assumed $p<\frac{7}{3}$, $p<q$, which ensures that $x_1,x_2,y_1,y_2$ are positive. This a posteriori justifies the fact that $J_0(p,q)$ is finite. The formula \eqref{Hformula} allows us to express $H(x_1,y_1)$ in the following way (using $y_2=1/2$):
   \[
H(x_1,y_1)=-(2x_1-1)B\left(x_1,y_2\right)+(2x_1+2y_1-1)B\left(x_1+y_1,y_2\right).
\]
It turns out that
\[
-(2x_1-1)=-\frac{7-2p-q}{q-p},\quad (2x_1+2y_1-1)=\frac{5-q}{q-p},\quad x_1+y_1=\frac{5-p}{2(q-p)}=x_2.
\]
As a consequence, there is a simplification in the expression of $F(\phi_*)$, which becomes
\[
   F(\phi_*)=   C_*^{-\frac12}\frac{7-2p-q}{q-p} B\left(x_1,y_2\right).
 \]
 Setting
   \begin{equation}
     \label{C0formula}
C_0=\frac{ C_*^{-\frac12}}{q-p}C(\phi_*)>0
\end{equation}
gives the desired result. 
\end{proof}

\begin{lemma} \label{lemJ0=infty}
Assume that  $a_p<0$ and $a_q>0$. For $p \geq 7/3$ and $1<p<q$, we have 
\[
\lim_{\omega \to 0} J(\omega,p,q)=-\infty.
\]
\end{lemma}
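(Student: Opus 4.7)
Since $J(\omega,p,q)=C(\phi_0)F(\phi_0)$ with $C(\phi_0)\to C(\phi_*)=\phi_*^5/(2C_*(q-p))>0$ as $\phi_0\to\phi_*^+$ (computed in the proof of Lemma \ref{lemJ0sign}), the claim reduces to showing $F(\phi_0)\to-\infty$. The natural approach is to analyze the pointwise limit of the integrand
\[
\mathcal{I}(s,\phi_0)=s\cdot\frac{(5-p)\Phi_p+(5-q)\Phi_q}{(\Phi_p+\Phi_q)^{3/2}}.
\]
For each fixed $s\in(0,1)$ one has $\mathcal{I}(s,\phi_0)\to\mathcal{I}(s,\phi_*)$, the integrand of \eqref{eq:F_of_phi_*}. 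A short expansion near $s=0$ gives $\mathcal{I}(s,\phi_*)\sim C_*^{-1/2}(p-q)\,s^{(5-3p)/2}$. Since $p\geq 7/3$, the exponent satisfies $(5-3p)/2\leq -1$, so the limit integrand is negative and non-integrable at $s=0$; near $s=1$ it behaves like $(1-s)^{-1/2}$ and is integrable there.

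I would split $F(\phi_0)=\int_0^{s_0}\mathcal{I}(s,\phi_0)\,ds+\int_{s_0}^1\mathcal{I}(s,\phi_0)\,ds$ for a small fixed $s_0>0$. On $[s_0,1]$, the denominator $\Phi_p+\Phi_q$ stays bounded below uniformly in $\phi_0$ close to $\phi_*$ (using that the limit function $C_*(s^{p-1}-s^{q-1})$ vanishes on $[s_0,1]$ only at $s=1$ and does so to first order, matching the order of vanishing of the numerator), so dominated convergence yields $\int_{s_0}^1\mathcal{I}(s,\phi_0)\,ds\to\int_{s_0}^1\mathcal{I}(s,\phi_*)\,ds$, which is finite. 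For the first piece, I would verify that, for $s_0$ small enough and $\phi_0$ in a sufficiently small right neighborhood of $\phi_*$, the numerator $(5-p)\Phi_p+(5-q)\Phi_q$ is bounded above by a strictly negative constant; this follows by continuity since at $(s,\phi_0)=(0,\phi_*)$ it equals $C_*(p-q)<0$. Thus $-\mathcal{I}(s,\phi_0)\geq 0$ on $[0,s_0]\times[\phi_*,\phi_*+\eta]$, and Fatou's lemma gives
\[
\liminf_{\phi_0\to\phi_*^+}\int_0^{s_0}\bigl(-\mathcal{I}(s,\phi_0)\bigr)\,ds\geq\int_0^{s_0}\bigl(-\mathcal{I}(s,\phi_*)\bigr)\,ds=+\infty,
\]
so $\int_0^{s_0}\mathcal{I}(s,\phi_0)\,ds\to-\infty$, concluding the argument.

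The main obstacle is handling of the singular piece through Fatou's lemma; here the key technical point is to secure uniformity of the sign of the numerator in a neighborhood of $(s,\phi_0)=(0,\phi_*)$, which requires a careful choice of $s_0$ and $\eta$ independent of $\phi_0$. Apart from that, the dominated convergence step on $[s_0,1]$ is routine, requiring only an explicit integrable upper bound on $|\mathcal{I}(s,\phi_0)|$ uniform in $\phi_0$ near $\phi_*$, which can be obtained by making explicit the $(1-s)^{-1/2}$ behavior near $s=1$.
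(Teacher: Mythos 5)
Your proposal is correct and follows essentially the same route as the paper: both reduce to the expression \eqref{eq:F_of_phi_*} and observe that the limiting integrand behaves like $(p-q)s^{(5-3p)/2}$ near $s=0$, which is negative and non-integrable precisely when $p\geq\frac73$, while $C(\phi_0)\to C(\phi_*)>0$. The only difference is that the paper simply reads off $F(\phi_*)=-\infty$ from the limiting integral, whereas you additionally justify the passage to the limit $\lim_{\phi_0\to\phi_*}F(\phi_0)=-\infty$ by splitting at $s_0$ and invoking Fatou and dominated convergence --- a welcome extra degree of rigor, not a different method.
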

The second part of Proposition \ref{propJ0} is a direct consequence of Lemma \ref{lemJ0=infty}.

\begin{proof}
Coming back to the expression \eqref{eq:F_of_phi_*} of $F(\phi_*)$ in the proof of Lemma \ref{lemJ0sign},   we observe that if $\frac{5-3p}{2}\leq -1$, i.e.~$p\geq \frac73$, then $ F(\phi_*)=-\infty$, and, since $\lim_{\phi_0\to\phi_*}C(\phi_0)=C(\phi_*)>0$, we also have $J_0(p,q)=-\infty$ when $p\geq \frac73$.
\end{proof}

\subsection{The large frequency case}

In this section, we determine the limit of $J(\omega,p,q)$ when $\omega$ tends to $\omega^*$.
Let $J^*$ be defined by 
\[
J^*(p,q)=\lim_{\omega \to \omega^*} J(\omega,p,q).
\]

We first consider the case where $a_q>0$.

\begin{proposition}
  \label{prop:sign_J^*_a_q_positive}
  Let $a_q>0$. The following holds.
  \begin{enumerate}
  \item If $1<q<\frac{7}{3}$, then $J^*(p,q)=0^+$.
  \item If $q=\frac{7}{3}$, then $0<J^*(p,q)<\infty$.
  \item If $\frac{7}{3}<q<5$, then $J^*(p,q)=\infty$.
  \item If $q=5$, then $J^*(p,q)=0^{\operatorname{sign}(a_p)}$.
  \item If $5<q$, then $J^*(p,q)=-\infty$.
  \end{enumerate}
\end{proposition}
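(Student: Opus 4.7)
My plan is to mirror the proof of Proposition \ref{prop:sign_J_0_a_p_positive}, swapping the roles of $(p,a_p,\phi_*)$ and $(q,a_q,\phi^*)$. Under $a_q>0$ we have $\omega^*=\phi^*=+\infty$, so by \eqref{eq:dphidomega} the limit $\omega\to\omega^*$ is equivalent to $\phi_0\to+\infty$, and I aim at computing the leading-order behaviour of $J(\omega,p,q)=C(\phi_0)F(\phi_0)$ in that regime.

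The first step is the asymptotic of $C(\phi_0)$: from \eqref{newC} and \eqref{eq:dphidomega}, using $p<q$ to factor $\phi_0^{q-2}$ out of the denominator of $\partial_\omega\phi_0$, one obtains $C(\phi_0)=\phi_0^{4-q}\bigl(\frac{q+1}{4a_q(q-1)}+o(1)\bigr)$. For the asymptotic of $F(\phi_0)$, I would factor $\phi_0^{q+1}$ out of both the numerator of the integrand in \eqref{eq:F(phi_0)} and (to the power $3/2$) out of the denominator, so that the remaining $\phi_0$-dependency enters only through $\phi_0^{p-q}\to 0$. A dominated convergence argument then gives, when $q\ne 5$,
\[
F(\phi_0)=(5-q)\phi_0^{-(q+1)/2}\left(\int_0^1\left(\tfrac{2a_q}{q+1}(1-s^{q-1})\right)^{-1/2}s\,ds+o(1)\right),
\]
whence $J\sim(5-q)K\phi_0^{(7-3q)/2}$ with $K>0$; reading off the signs of $5-q$ and of the exponent $(7-3q)/2$ then gives the four cases (1), (2), (3), (5) of the proposition. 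The borderline case $q=5$ requires a finer expansion: the leading coefficient $5-q$ vanishes identically, so the subleading $(5-p)\Phi_p$ term in the numerator must be kept while factoring $\Phi_q$ alone out of the denominator, producing $F(\phi_0)\sim a_p(5-p)K''\phi_0^{p-8}$ and hence $J\sim a_p(5-p)K'\phi_0^{p-9}$ with $K',K''>0$; since $p<5$ and $p-9<0$, the limit is $0^{\operatorname{sign}(a_p)}$, proving case (4).

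The main technical obstacle is to justify the interchange of limit and integral, particularly near $s=1$ where the integrand has a $(1-s)^{-1/2}$ integrable singularity. In the case $q\ne 5$, the denominator of the integrand (after factoring) is bounded below by a positive multiple of $(1-s)$ uniformly for $\phi_0$ large, because $\phi_0^{p-q}\to 0$ while the $q$-contribution $\tfrac{2a_q}{q+1}(1-s^{q-1})$ satisfies $\geq c(1-s)$; the bounds on $s$-dependent ratios one needs to invoke here (e.g.\ $(1-s^{p-1})/(1-s^{q-1})$) are precisely what Lemma \ref{lem:f_bijective} supplies. The case $q=5$ is handled analogously, using that $(1-s^{p-1})/(1-s^4)^{3/2}\sim C(1-s)^{-1/2}$ near $s=1$ and is therefore still integrable.
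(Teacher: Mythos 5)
Your proposal is correct and follows essentially the same route as the paper: both compute the asymptotics $C(\phi_0)=\phi_0^{4-q}\bigl(\frac{q+1}{4a_q(q-1)}+o(1)\bigr)$ and $F(\phi_0)=(5-q)\phi_0^{-(q+1)/2}(K+o(1))$ for $q\neq5$ (with the separate subleading expansion $F(\phi_0)\sim a_p(5-p)K''\phi_0^{p-8}$ when $q=5$) as $\phi_0\to\infty$, and read off the sign and the exponent $\frac{7-3q}{2}$ (resp. $p-9$). The only difference is that you spell out the dominated-convergence justification near $s=1$, which the paper leaves implicit by referring to the proof of Proposition \ref{prop:sign_J_0_a_p_positive}.
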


\begin{proof}
  Since $a_q>0$, we have $\omega^*=\infty$ and therefore $\phi^*=\infty$.
  Following similar arguments as in the proof of Proposition \ref{prop:sign_J_0_a_p_positive}, as $\phi_0\to\infty$, for $q\neq 5$, we have
  \begin{align*}
    C(\phi_0)&=\phi_0^{4-q}\left(\frac {q+1}{4a_q(q-1)} +o(1)\right),\\
    F(\phi_0)&=   (5-q)\phi_0^{-\frac{q+1}2}\left(\int_0^1 \left(\frac{2a_q}{q+1}(1-s^{q-1})\right)^{-\frac12} sds+o(1)\right).
  \end{align*}
As a consequence, for $q\neq 5$, when $\phi_0\to\infty$ (i.e.~$\omega\to\infty$), there exists $C=C(a_q,q)>0$ such that
\[
J(\omega,p,q)=(5-q)\phi_0^{\frac{7-3q}{2}}C(1+o(1)).
\]
In the particular case $q=5$, we instead write
   \begin{align*}
F(\phi_0)&=a_p(5-p) \phi_0^{p-8}\left(\int_0^1 \frac{\frac{2}{p+1} (1-s^{p-1})}{\left(\frac{a_q}{3}(1-s^{4})\right)^{\frac32} } sds+o(1)\right).
      \end{align*}
   and therefore we get
\[
J(\omega,p,q)=a_p(5-p)\phi_0^{p-9}C(1+o(1)).
\]
The two estimates on $J$ lead to the desired result.
\end{proof}

Then we consider the case where $a_q<0$ (and thus $a_p>0$ to ensure existence of standing waves).

\begin{proposition}
  \label{prop:sign_J^*_a_q_negative}
  Let $a_p>0$, $a_q<0$ and $5\leq p$. Then
  \[
J^*(p,q)=\infty.
    \]
\end{proposition}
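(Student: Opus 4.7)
The plan is to prove $J^*(p,q)=+\infty$ by showing that both factors in $J=C(\phi_0)F(\phi_0)$ of Lemma \ref{lem:J=CF} diverge to $+\infty$ as $\phi_0\to\phi^*$. Since $a_q<0$, the value $\phi^*$ given in \eqref{eq:formula_of_phi*} is precisely the positive root of
\[
\frac{2a_p(p-1)}{p+1}+\frac{2a_q(q-1)}{q+1}\phi_0^{q-p}=0,
\]
which is exactly the bracket appearing in the denominator of \eqref{eq:dphidomega}. Hence $\partial_\omega\phi_0\to+\infty$, and by \eqref{newC}, $C(\phi_0)=\tfrac12\phi_0^2\partial_\omega\phi_0\to+\infty$.

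For $F(\phi_0)$, the idea is to pass to the pointwise limit of the integrand and invoke Fatou's lemma. Using the defining relation of $\phi^*$ to eliminate $(\phi^*)^{q-p}$, and introducing $h(s)=(1-s^{q-1})/(1-s^{p-1})$, one can factor
\begin{align*}
(\Phi_p+\Phi_q)\big|_{\phi_0=\phi^*}&=K(s)\bigl[(q-1)-(p-1)h(s)\bigr], \\
((5-p)\Phi_p+(5-q)\Phi_q)\big|_{\phi_0=\phi^*}&=K(s)\bigl[(q-5)(p-1)h(s)-(p-5)(q-1)\bigr],
\end{align*}
where $K(s)=\frac{2a_p(\phi^*)^{p+1}}{(q-1)(p+1)}(1-s^{p-1})>0$ on $(0,1)$. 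Lemma \ref{lem:f_bijective} yields $1<h(s)<\tfrac{q-1}{p-1}$ on $(0,1)$, so the first bracket is positive. For the second, the hypothesis $p\geq 5$ combined with $h(s)>1$ gives
\[
(q-5)(p-1)h(s)-(p-5)(q-1)>(q-5)(p-1)-(p-5)(q-1)=4(q-p)>0,
\]
so the limiting integrand of $F$ is strictly positive on $(0,1)$.

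Next I would analyze the behavior at $s=1$. The expansion $h(s)=\tfrac{q-1}{p-1}\bigl[1-\tfrac{q-p}{2}(1-s)+O((1-s)^2)\bigr]$ gives $(q-1)-(p-1)h(s)\sim\tfrac{(q-1)(q-p)}{2}(1-s)$, while the numerator bracket has the strictly positive finite limit $(q-1)(q-p)$ at $s=1$. Combined with $K(s)\sim\tfrac{2a_p(p-1)(\phi^*)^{p+1}}{(q-1)(p+1)}(1-s)$, this gives $(\Phi_p+\Phi_q)|_{\phi_0=\phi^*}\sim c_1(1-s)^2$ and $((5-p)\Phi_p+(5-q)\Phi_q)|_{\phi_0=\phi^*}\sim c_2(1-s)$ with $c_1,c_2>0$. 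Hence the limiting integrand of $F$ behaves as a positive multiple of $(1-s)^{-2}$ at $s=1$, and is non-integrable on $(0,1)$.

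Finally, for any fixed $s_0\in(0,1)$, continuity of the integrand in $\phi_0$ together with the positivity established at $\phi^*$ (uniformly on compact subsets of $(s_0,1)$, and near $s=1$ via the asymptotic analysis above) ensures that the integrand is non-negative on $[s_0,1)$ for $\phi_0$ sufficiently close to $\phi^*$. Fatou's lemma on $[s_0,1)$ then gives
\[
\liminf_{\phi_0\to\phi^*}\int_{s_0}^1 (\cdots)\,ds\geq \int_{s_0}^1(\cdots)\big|_{\phi_0=\phi^*}\,ds=+\infty,
\]
while the integral on $[0,s_0]$ converges by continuity to a finite value. Therefore $F(\phi_0)\to+\infty$, and combining with $C(\phi_0)\to+\infty$ yields $J\to+\infty$. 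The main delicate point is the strict positivity of the limiting integrand on all of $(0,1)$, which crucially uses the hypothesis $p\geq 5$ through the inequality $4(q-p)>0$.
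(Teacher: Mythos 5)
Your proof is correct and takes essentially the same route as the paper: $C(\phi_0)\to\infty$ because $\partial_\omega\phi_0$ blows up at $\phi^*$, and the integrand of $F$ is pointwise positive at $\phi_0=\phi^*$ by exactly the same application of Lemma \ref{lem:f_bijective} (you normalize by $1-s^{p-1}$ and use $h$, the paper normalizes by $1-s^{q-1}$ and uses $1/h$, but the inequality is identical). Your additional material — the $(1-s)^{-2}$ singularity showing $F(\phi^*)=+\infty$ and the Fatou argument — goes beyond the paper, which simply asserts $F(\phi^*)>0$ and concludes; this supplies the justification for the limit passage that the paper leaves implicit, and is welcome extra rigor rather than a different method.
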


Proposition \ref{prop:sign_J^*_a_q_negative} does not cover the whole possible range of $p$ and $q$. As it was not necessary in our analysis, we  did not try to cover the remaining cases. 

\begin{proof}[Proof of Proposition \ref{prop:sign_J^*_a_q_negative}]
  By construction, $\omega_*=\omega(\phi_*)$ is the value of $\omega$ at which $\partial_{\phi_0}\omega(\phi^*)=0$. As a consequence, we have
  \[
    \lim_{\phi_0\to\phi^*}\frac{\partial\phi_0}{\partial\omega}=\infty,
  \]
  which, given the value \eqref{newC} of $C(\phi_0)$, readily implies
  \[
    \lim_{\phi_0\to\phi^*}C(\phi_0)=\infty.
    \]
   Using the expressions of \eqref{eq:phi_p,phi_q} of $\Phi_p$ and $\Phi_q$ and the expression \eqref{eq:F(phi_0)} of $F$ we have
    \[
F(\phi^*)=\int_0^s\frac{2a_p\frac{(5-p)(\phi^*)^{p+1}(1-s^{p-1})}{p+1}+2a_q\frac{(5-q)(\phi^*)^{q+1}(1-s^{q-1})}{q+1}}{\left(2a_p\frac{(\phi^*)^{p+1}(1-s^{p-1})}{p+1}+2a_q\frac{(\phi^*)^{q+1}(1-s^{q-1})}{q+1}\right)^{\frac32}}sds.
\]
If $p=5$, then we have $F(\phi_*)>0$ and the conclusion follows. From now on, assume that $p> 5$.
Recalling the value of $\phi^*$ given in \eqref{eq:formula_of_phi*}, we infer that 
\begin{multline*}
  2a_p\frac{(5-p)(\phi^*)^{p+1}(1-s^{p-1})}{p+1}+2a_q\frac{(5-q)(\phi^*)^{q+1}(1-s^{q-1})}{q+1}
  \\
  =2a_q (\phi^*)^{p+1}(1-s^{q-1})\frac{(5-q)}{q+1}\left(\frac{a_p(5-p)(q+1)}{a_q(5-q)(p+1)}\frac{(1-s^{p-1})}{(1-s^{q-1})}+(\phi^*)^{q-p}\right)
  \\
  =2a_p (\phi^*)^{p+1}(1-s^{q-1})\frac{(5-q)}{q+1}\left(\frac{(5-p)(q+1)}{(5-q)(p+1)}\frac{(1-s^{p-1})}{(1-s^{q-1})}-\frac{p-1}{q-1}{\frac {q+1}{p+1}}\right)
  \\
    =2a_p (\phi^*)^{p+1}(1-s^{q-1})\frac{(5-q)}{p+1}\left(\frac{(5-p)}{(5-q)}\frac{(1-s^{p-1})}{(1-s^{q-1})}-\frac{p-1}{q-1}\right)>0,
\end{multline*}
where we have used in particular Lemma \ref{lem:f_bijective} for the last inequality. This implies that $F(\phi^*)>0$ which, since $J(\omega,p,q)=C(\phi_0)F(\phi_0)$, finishes the proof.
\end{proof}


\section{Determination of the sign of the slope}
\label{sec:sign}

In this section, we determine for each possible values of $a_p$, $a_q$, $p$ and $q$ the sign of $J(\omega,p,q)$. Combined with the stability criteria of Section \ref{sec:stab-crit}, this will prove Theorem \ref{thm:main}.
The general strategy of our proofs is the following.
Recall from Lemma \ref{lem:J=CF} that $J(\omega,p,q)=C(\phi_0)F(\phi_0),$ where 
\[
F(\phi_0)=\int_0^1 \frac{(5-p)\Phi_p+(5-q)\Phi_q}{(\Phi_p+\Phi_q)^{\frac{3}{2}}}s ds,
\]
and $C(\phi_0)>0$. Moreover, $\omega$ and $\phi_0$ are in an increasing one to one correspondence.
Hence, to determine the sign of $J$, it is sufficient to determine the sign of $F(\phi_0)$. To do this, we have two ingredients at our disposal. First, it is usually not difficult to establish that $F$ has a constant sign on intervals of the type $(\phi_*,\phi_{0,1})$ or $(\phi_{0,1}, \phi^*)$. On the other hand, the expression for $\partial_{\phi_0}F(\phi_0)$ given in Lemma \ref{lem:derivative_of_I} allows us to show that $\partial_{\phi_0}F(\phi_0)$ has a constant sign on intervals of the type $(\phi_*,\phi_{0,2})$ or $(\phi_{0,2}, \phi^*)$. If the intervals of the two ingredients overlap and if the signs are matching, the conclusion will follow. For example, if $F(\phi_0)>0$ on $(\phi_*,\phi_{0,1})$, and $\partial_{\phi_0}F(\phi_0)>0$ on $(\phi_{0,2}, \phi^*)$ and $\phi_{0,1}>\phi_{0,2}$, then $F(\phi_0)>0$ on $(\phi_*, \phi^*)$.  The detail of each case is given in the following sections.

\subsection{The focusing-focusing case}

In this section, we consider the case $a_p>0$, $a_q > 0$. In this case we have $\Phi_p>0$ and $\Phi_q>0$.

\begin{lemma}
  Let $a_p>0$, $a_q > 0$ and $q\leq 5$. Then for all $\omega \in(0,\infty)$ we have
  \[
    J(\omega,p,q)>0,
  \]
  and the family of standing waves is of type S. 
\end{lemma}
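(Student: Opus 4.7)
The plan is extremely short because this case is essentially a sign inspection inside the integrand of $F(\phi_0)$. First I would invoke Lemma \ref{lem:J=CF} to reduce the question to determining the sign of
\[
F(\phi_0)=\int_0^1 \frac{(5-p)\Phi_p+(5-q)\Phi_q}{(\Phi_p+\Phi_q)^{3/2}}\, s\, ds,
\]
since $C(\phi_0)>0$ and the map $\omega \mapsto \phi_0$ is a strictly increasing bijection from $(0,\omega^*)=(0,\infty)$ onto $(0,\infty)$ by the lemma in Section \ref{sec:preliminaries}.

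Next I would check that every factor in the integrand has the right sign. Because $a_p,a_q>0$ and $0<s<1$ with $1<p<q$, both quantities
\[
\Phi_p=\frac{2a_p}{p+1}\phi_0^{p+1}(1-s^{p-1}),\qquad \Phi_q=\frac{2a_q}{q+1}\phi_0^{q+1}(1-s^{q-1})
\]
are strictly positive on $(0,1)$, so in particular $(\Phi_p+\Phi_q)^{3/2}>0$. Under the hypothesis $q\leq 5$ and $p<q$, we have $5-p>0$ and $5-q\geq 0$, so the numerator $(5-p)\Phi_p+(5-q)\Phi_q$ is strictly positive on $(0,1)$ as well (the first term alone is already positive). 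Hence the integrand is pointwise strictly positive, and $F(\phi_0)>0$ for every $\phi_0\in(0,\infty)$.

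Combining this with $C(\phi_0)>0$ gives $J(\omega,p,q)>0$ for all $\omega\in(0,\infty)$. The stability criterion recalled in Section \ref{sec:stab-crit} (the slope condition of Grillakis--Shatah--Strauss, always applicable here since $\omega>0$) then immediately implies that every member of the family $(\phi_\omega)_{\omega\in(0,\infty)}$ is orbitally stable, i.e.\ the family is of type $S$. There is no real obstacle: unlike the mixed-sign cases where the two terms in the numerator fight each other and one has to combine the endpoint analysis of Section \ref{sec:endpoints} with the monotonicity computation of Lemma \ref{lem:derivative_of_I}, here the integrand has a definite sign and no further analysis is needed.
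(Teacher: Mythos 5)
Your proposal is correct and follows exactly the paper's argument: reduce to the sign of $F(\phi_0)$ via Lemma \ref{lem:J=CF}, note that $\Phi_p,\Phi_q>0$ in the focusing-focusing case, and observe that $5-p>0$ and $5-q\geq 0$ make the integrand pointwise positive. No differences worth noting.
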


\begin{proof}
  If $q \leq 5 $, then $5-p>0$ and $ 5-q\geq 0$. Therefore for any $\phi_0\in(0,\infty)$  we have
  \[
    F(\phi_0)>0,
  \]
  which gives the desired conclusion.
\end{proof}

\begin{lemma}
Let $a_p>0$, $a_q > 0$ and $p \geq 5$. Then for all $\omega \in(0,\infty)$ we have
  \[
    J(\omega,p,q)<0,
  \]
  and the family of standing waves is of type U. 
\end{lemma}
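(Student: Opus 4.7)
The plan is to mirror the previous lemma, since the hypotheses make the sign of every term in $F(\phi_0)$ immediate. Because $a_p>0$ and $a_q>0$, formula \eqref{eq:phi_p,phi_q} gives $\Phi_p>0$ and $\Phi_q>0$ for every $\phi_0\in(0,\infty)$ and every $s\in(0,1)$, so the denominator $(\Phi_p+\Phi_q)^{3/2}$ in \eqref{eq:F(phi_0)} is strictly positive on the domain of integration.

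For the numerator, I would use the hypothesis $p\geq 5$ together with $q>p$ to write $5-p\leq 0$ and $5-q<0$ (strictly, since $q>p\geq 5$). Thus $(5-p)\Phi_p+(5-q)\Phi_q$ is the sum of a non-positive and a strictly negative quantity, hence strictly negative pointwise in $s$. Integrating the (strictly negative, pointwise bounded away from $0$ on compact subsets of $(0,1)$) integrand against $s\,ds$ yields $F(\phi_0)<0$ for every $\phi_0\in(0,\infty)$.

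By Lemma \ref{lem:J=CF} we have $J(\omega,p,q)=C(\phi_0)F(\phi_0)$ with $C(\phi_0)>0$, so $J(\omega,p,q)<0$ for all $\omega\in(0,\infty)$. The stability criterion recalled in Section \ref{sec:stab-crit} then gives instability of $e^{i\omega t}\phi_\omega$ for every $\omega\in(0,\infty)$, i.e.~the family is of type $U$. There is no real obstacle in this case; the conclusion follows from a direct sign inspection, in perfect symmetry with the $q\leq 5$ sub-lemma treated just above.
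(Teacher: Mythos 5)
Your proposal is correct and follows exactly the paper's argument: since $a_p,a_q>0$ give $\Phi_p,\Phi_q>0$ and $p\geq 5<q$ gives $5-p\leq 0$, $5-q<0$, the integrand of $F$ is negative pointwise, so $F(\phi_0)<0$ and $J=C F<0$ with $C>0$. Nothing to add.
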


\begin{proof}
  If $p \geq 5$, then $5-p \leq 0$ and $5-q<0$.  Therefore for any $\phi_0\in(0,\infty)$  we have
  \[
    F(\phi_0)<0,
\]
  which gives the desired conclusion.
\end{proof}

The remaining case $p<5<q$ is a bit more involved to consider.

\begin{lemma} \label{lem:FF:J<0}
Let $a_p>0$, $a_q > 0$ and $p<5<q$. There exists $\phi_{0,1}$ (explicitly given in \eqref{eq:FF:phi_0,1}) such that if $\phi_0^{q-p}>\phi_{0,1}^{q-p}$ then
\[
F(\phi_0)<0.
\]
\end{lemma}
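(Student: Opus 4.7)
My plan for Lemma \ref{lem:FF:J<0} is to isolate $\phi_0^{q-p}$ inside the numerator of the integrand and then use the bound $\tfrac{1-s^{p-1}}{1-s^{q-1}}<1$ coming from Lemma \ref{lem:f_bijective} to make the numerator pointwise negative.

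First, since $a_p,a_q>0$, both $\Phi_p$ and $\Phi_q$ are positive on $(0,1)$, so the denominator $(\Phi_p+\Phi_q)^{3/2}$ is positive. Hence the sign of $F(\phi_0)$ is governed by the sign of the numerator $N(s,\phi_0):=(5-p)\Phi_p+(5-q)\Phi_q$. Substituting the definitions \eqref{eq:phi_p,phi_q} and pulling out the common factor $\tfrac{2}{p+1}\phi_0^{p+1}$ (with $5-p>0$ and $5-q<0$), the condition $N(s,\phi_0)<0$ is equivalent, for $s\in(0,1)$, to
\[
\phi_0^{q-p}\;>\;\frac{(5-p)\,a_p\,(q+1)}{(q-5)\,a_q\,(p+1)}\cdot\frac{1-s^{p-1}}{1-s^{q-1}}.
\]

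Next, I would set
\begin{equation}\label{eq:FF:phi_0,1}
\phi_{0,1}^{q-p}:=\frac{(5-p)\,a_p\,(q+1)}{(q-5)\,a_q\,(p+1)},
\end{equation}
and observe that by Lemma \ref{lem:f_bijective}, the map $s\mapsto \tfrac{1-s^{q-1}}{1-s^{p-1}}$ is an increasing bijection from $(0,1)$ onto $(1,\tfrac{q-1}{p-1})$; taking reciprocals, $\tfrac{1-s^{p-1}}{1-s^{q-1}}\in(\tfrac{p-1}{q-1},1)$ for every $s\in(0,1)$. Consequently, whenever $\phi_0^{q-p}>\phi_{0,1}^{q-p}$, the displayed inequality above holds for \emph{every} $s\in(0,1)$, so $N(s,\phi_0)<0$ pointwise, and therefore $F(\phi_0)<0$.

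I don't anticipate any genuine obstacle here: the whole argument is a one-variable scaling observation followed by a direct application of the already-established Lemma \ref{lem:f_bijective}. The only minor point to be careful about is that the supremum $1$ of $\tfrac{1-s^{p-1}}{1-s^{q-1}}$ is only attained in the limit $s\to 0^+$ and not on $(0,1)$, which is exactly why the strict hypothesis $\phi_0^{q-p}>\phi_{0,1}^{q-p}$ in \eqref{eq:FF:phi_0,1} yields a strict (uniform) inequality on the integrand, and not merely a non-strict one. No monotonicity analysis of $F$ or computation involving $\partial_{\phi_0}I_\gamma$ is needed for this particular statement; those tools will be required only in the next step, where one has to propagate the sign of $F$ from $\phi_0^{q-p}>\phi_{0,1}^{q-p}$ down to smaller $\phi_0$ to locate the unique stability-change frequency.
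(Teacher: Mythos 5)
Your argument is correct and does prove the lemma as stated, but it takes a genuinely different and more elementary route than the paper, and the difference is not merely cosmetic. You force the numerator $(5-p)\Phi_p+(5-q)\Phi_q$ to be negative pointwise in $s$, using $\tfrac{1-s^{p-1}}{1-s^{q-1}}<1$ from Lemma \ref{lem:f_bijective}, which yields the threshold $\phi_{0,1}^{q-p}=\tfrac{a_p(5-p)(q+1)}{a_q(q-5)(p+1)}$. The paper instead only makes a weighted integral of the numerator negative: it writes $F=\int_0^1 l(s)/k(s)\,ds$ with $l$ the numerator (times $s$) and $k=(\Phi_p+\Phi_q)^{3/2}$, shows that $l$ changes sign exactly once from positive to negative once $\phi_0$ exceeds a preliminary threshold $\phi_{0,0}$, and exploits that $k$ is positive and decreasing to get $F(\phi_0)< k(s^*)^{-1}\int_0^1 l(s)\,ds$, whose sign is then decided by an explicit computation. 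This averaging trick buys a strictly smaller constant,
\[
\phi_{0,1}^{q-p}=\frac{a_p(5-p)(p-1)(q+1)^2}{a_q(q-5)(q-1)(p+1)^2}
=\frac{(p-1)(q+1)}{(q-1)(p+1)}\cdot\frac{a_p(5-p)(q+1)}{a_q(q-5)(p+1)},
\]
the prefactor being $<1$ since $p<q$. The size of this constant matters downstream: Lemma \ref{lem:FF:F_gamma_has_at_most_one_zero} needs $\phi_{0,1}^{q-p}<\phi_{0,2}^{q-p}$, with $\phi_{0,2}$ given by \eqref{eq:FF:phi_0,2}, so that the region where $F_\gamma<0$ overlaps the region where $F_\gamma$ is decreasing. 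With the paper's constant this reduces to $(5-p)(q+1)<(p+1)(5+2q-3p)$, equivalent to $3(p-1)(q-p)>0$, which always holds; with your constant it reduces to $(5-p)(q-1)<(5+2q-3p)(p-1)$, whose two sides differ by $(q-p)(3p-7)$, so it fails whenever $p<7/3$. In short, your proof is a valid and shorter proof of this particular statement, but if you carry your value of $\phi_{0,1}$ into the next step, the ``at most one zero'' argument breaks for $1<p<7/3$; the paper's more elaborate rearrangement-type argument exists precisely to produce a threshold small enough to close that gap.
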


\begin{proof} 
Using the formula \eqref{eq:F(phi_0)} of $F(\phi_0)$ and replacing in the numerator of the integrand $\Phi_p$ and $\Phi_q$ by their expressions \eqref{eq:phi_p,phi_q}, we obtain 
\[
F(\phi_0)= \int_0^1 \frac{(5-p)\frac{2a_p}{p+1}\phi_0^{p+1}(s-s^{p})+(5-q)\frac{2a_q}{q+1}\phi_0^{q+1}(s-s^{q})}{(\Phi_p+\Phi_q)^{\frac{3}{2}}}ds.
\]
Let 
\[
l(s)=(5-p)\frac{2a_p}{p+1}\phi_0^{p+1}(s-s^{p})+(5-q)\frac{2a_q}{q+1}\phi_0^{q+1}(s-s^{q}),
\]
and 
\[
k(s)=(\Phi_p+\Phi_q)^{\frac{3}{2}}.
\]
We may reformulate $l(s)$ in the following way:
\[
l(s)=\left((5-p)\frac{2a_p}{p+1}\phi_0^{p+1}+(5-q)\frac{2a_q}{q+1}\phi_0^{q+1}\frac{1-s^{q-1}}{1-s^{p-1}}\right) (s-s^{p}).
\]
From Lemma \ref{lem:f_bijective}, we know that the function $s\to \frac{1-s^{q-1}}{1-s^{p-1}}$ is increasing from $1$ to $\frac{q-1}{p-1}$ when $s$ goes from $0$ to $1$. Let  $\phi_{0,0}$ be given by
\[
\phi_{0,0}^{q-p}=-\frac{a_p(5-p)(q+1)(p-1)}{a_q(5-q)(p+1)(q-1)},
\]
and assume from now on that
$\phi_0>\phi_{0,0}$. Then
\[
\lim_{s\to 1}\frac{l(s)}{s-s^p}<0,
\]
and there exists $s^*\in[0,1)$ such that 
$l(s)>0$ for $s \in (0,s^*)$ and $l(s)<0$ for $s \in (s^*,1)$. 

Define $\tilde k$ by $\tilde k(s)=\frac{k(s)}{k (s^*)}$. Then $\tilde k(s^*)=1$.
As $k$ and therefore $\tilde k$ is a  positive decreasing function of $s$, for all $s \in (0,1)$ we have
\[
\frac{l(s)}{\tilde k(s)}<l(s).
\]
Integrating over $(0,1)$, we obtain
\[
F(\phi_0)< \frac{1}{k (s^*)}\int_0^1l(s)ds,
\]
and $F(\phi_0)$ will be negative if the integral in the right member is. 
Define $\phi_{0,1}>\phi_{0,0}$ by
\begin{equation} \label{eq:FF:phi_0,1}
\phi_{0,1}^{q-p}=-\frac{a_p(5-p)(p-1)(q+1)^2}{a_q(5-q)(q-1)(p+1)^2}.
\end{equation}
If $\phi_0>\phi_{0,1}$, then
\[
\int_0^1 l(s)ds=(5-p)\frac{a_p}{p+1}\phi_0^{p+1}\left( \frac{p-1}{p+1}\right)+(5-q)\frac{a_q}{q+1}\phi_0^{q+1}\left( \frac{q-1}{q+1}\right)
<0.
\]
Hence for any $\phi_0>\phi_{0,1}$ we have $F(\phi_0)<0$. This concludes the proof.
\end{proof}

\begin{lemma} \label{lem:FF:gamma=(p+1)/2}
  Let $a_p>0$, $a_q > 0$ and $p<5<q$.
  Let $\gamma=\frac{p+1}{2}$.
There exists $\phi_{0,2}$ (explicitly given in \eqref{eq:FF:phi_0,2}) such that the integrand $I_\gamma$ of $F_\gamma$  defined in \eqref{eq:def-I-gamma} verifies
\[
  \frac{\partial I_{\gamma}}{\partial \phi_0} <0
\]
for all $\phi_0 \in (0, \phi_{0,2}).$
\end{lemma}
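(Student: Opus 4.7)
The plan is to exploit the particular value $\gamma=(p+1)/2$ in the formula for $\partial I_\gamma/\partial\phi_0$ provided by Lemma \ref{lem:derivative_of_I}. With this choice, $2\gamma-(p+1)=0$, so the coefficient of $\Phi_p$ in the bracket $\bigl((5-p)(2\gamma-(p+1))\Phi_p+(5-q)(2\gamma-(q+1))\Phi_q\bigr)$ vanishes. The remaining coefficient is $(5-q)(p-q)=(q-5)(q-p)$, which is strictly positive in the regime $p<5<q$. This is precisely the point where the judicious choice of $\gamma$ pays off: it turns one of the two competing terms off, leaving a much more tractable expression.

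After this simplification, the numerator of $\partial I_\gamma/\partial\phi_0$ becomes $(q-5)(q-p)\Phi_q(\Phi_p+\Phi_q)-3(q-p)^2\Phi_p\Phi_q$. Since $\Phi_p,\Phi_q>0$ and $\Phi_p+\Phi_q>0$ (because $a_p,a_q>0$), one may divide by $(q-p)\Phi_q>0$ and reduce the sign condition $\partial I_\gamma/\partial\phi_0<0$ to
\[
(q-5)\Phi_q<\bigl(3(q-p)-(q-5)\bigr)\Phi_p=(2q-3p+5)\Phi_p,
\]
the prefactor $2q-3p+5$ being positive since $p<5<q$.

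Substituting the explicit expressions \eqref{eq:phi_p,phi_q} for $\Phi_p$ and $\Phi_q$ and dividing by the positive quantity $\phi_0^{p+1}(1-s^{p-1})$, the inequality becomes
\[
(q-5)\frac{2a_q}{q+1}\phi_0^{q-p}\frac{1-s^{q-1}}{1-s^{p-1}}<(2q-3p+5)\frac{2a_p}{p+1}.
\]
Here the key technical tool is Lemma \ref{lem:f_bijective}, which bounds $\frac{1-s^{q-1}}{1-s^{p-1}}<\frac{q-1}{p-1}$ uniformly in $s\in(0,1)$. Replacing the ratio by this bound yields a sufficient condition depending only on $\phi_0$, namely $\phi_0^{q-p}\leq \phi_{0,2}^{q-p}$ with
\begin{equation}
\label{eq:FF:phi_0,2}
\phi_{0,2}^{q-p}=\frac{(2q-3p+5)(p-1)(q+1)a_p}{(q-5)(q-1)(p+1)a_q}.
\end{equation}

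The main ``obstacle'' is really just keeping the algebra clean; there is no analytic difficulty once the cancellation induced by $\gamma=(p+1)/2$ is exploited and Lemma \ref{lem:f_bijective} is invoked. As a sanity check, one can verify that $\phi_{0,2}>\phi_{0,1}$ (with $\phi_{0,1}$ from \eqref{eq:FF:phi_0,1}), since the ratio $\phi_{0,2}^{q-p}/\phi_{0,1}^{q-p}=(2q-3p+5)(p+1)/[(5-p)(q+1)]$ exceeds $1$ by an elementary manipulation equivalent to $3(p-1)(q-p)>0$. This overlap is what will be needed to match the sign of $F$ for large $\phi_0$ (Lemma \ref{lem:FF:J<0}) with the sign information near $\phi_0=0$ via the monotonicity here, following the general strategy outlined at the start of Section \ref{sec:sign}.
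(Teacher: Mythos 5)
Your proof is correct and follows essentially the same route as the paper's: you use the cancellation $2\gamma-(p+1)=0$ in Lemma \ref{lem:derivative_of_I}, factor out the positive quantity $(q-p)\Phi_q$, substitute the explicit forms \eqref{eq:phi_p,phi_q}, and invoke Lemma \ref{lem:f_bijective} to obtain a uniform-in-$s$ threshold, arriving at exactly the paper's value of $\phi_{0,2}$ in \eqref{eq:FF:phi_0,2}. The closing comparison $\phi_{0,2}>\phi_{0,1}$ is not needed for this lemma but correctly anticipates the argument of Lemma \ref{lem:FF:F_gamma_has_at_most_one_zero}.
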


\begin{proof}
As $\gamma=\frac{p+1}{2}$, from Lemma \ref{lem:derivative_of_I} we have
\begin{align*}
\frac{\partial I_{\gamma}}{\partial \phi_0}&=\frac12\phi_0^{\gamma-1}
\left( \frac{\left((5-q)(p-q)\Phi_q \right)(\Phi_p+\Phi_q)-3(p-q)^2\Phi_p\Phi_q
      }{\left(   \Phi_p+\Phi_q
        \right)^{\frac52}}\right ),\\
&=\frac12\phi_0^{\gamma-1} \Phi_q (p-q)
\left( \frac{ (5+2q-3p)\Phi_p+(5-q)\Phi_q}{\left(   \Phi_p+\Phi_q
        \right)^{\frac52}}\right ).\\       
\end{align*}
As a consequence $\partial_{\phi_0}I_{\gamma}<0$ if 
\[
(5+2q-3p)\Phi_p+(5-q)\Phi_q>0.
\]
Replacing $\Phi_p$ and $\Phi_q$ by their expressions \eqref{eq:phi_p,phi_q}, this is equivalent to
\begin{equation} \label{eq:FF:5+2q-3p<0}
(5+2q-3p)\frac{a_p}{p+1}\phi_0^{p+1}(1-s^{p-1}) +(5-q)\frac{a_q}{q+1}\phi_0^{q+1}(1-s^{q-1})>0.
\end{equation}
Since $p<5<q$, we have $5+2q-3p>0$, and therefore \eqref{eq:FF:5+2q-3p<0} becomes 
\begin{equation} \label{eq:z}
\phi_0^{q-p}<-\frac{a_p}{a_q}\frac{(5+2q-3p)}{(5-q)}\frac{(q+1)}{(p+1)}\frac{(1-s^{p-1})}{(1-s^{q-1})}.
\end{equation}
We know from Lemma \ref{lem:f_bijective} that 
\[
\frac{p-1}{q-1}<\frac{1-s^{p-1}}{1-s^{q-1}}.
\]
Define 
\begin{equation} \label{eq:FF:phi_0,2}
\phi_{0,2}^{q-p}=-\frac{a_p}{a_q}\frac{(5+2q-3p)}{(5-q)}\frac{(q+1)}{(p+1)}\frac{(p-1)}{(q-1)}.
\end{equation}
If $\phi_0<\phi_{0,2}$
then \eqref{eq:z} is verified, which
concludes the proof.
\end{proof}

\begin{lemma}
  \label{lem:FF:F_gamma_has_at_most_one_zero}
Let $a_p>0$, $a_q > 0$ and $p<5<q$.
The function $F_{\gamma}(\phi_0)$ has at most one zero in $(0,\infty)$.
\end{lemma}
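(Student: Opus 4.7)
The plan is to combine the two preceding lemmas, using the fact that $F_\gamma$ and $F$ differ only by the strictly positive factor $\phi_0^\gamma$, and therefore share their sign and their zeros. Lemma \ref{lem:FF:J<0} shows $F(\phi_0)<0$ (and hence $F_\gamma(\phi_0)<0$) for $\phi_0>\phi_{0,1}$. Lemma \ref{lem:FF:gamma=(p+1)/2} shows that, with the choice $\gamma=(p+1)/2$, the integrand $I_\gamma$ has $\partial_{\phi_0}I_\gamma<0$ for every $s\in(0,1)$ when $\phi_0\in(0,\phi_{0,2})$. Since the integrand and its $\phi_0$-derivative are continuous in $(\phi_0,s)\in(0,\phi_{0,2})\times(0,1)$ and uniformly controlled near $s=1$, differentiation under the integral sign yields
\[
\partial_{\phi_0}F_\gamma(\phi_0)=\int_0^1 \partial_{\phi_0}I_\gamma(\phi_0)\,s\,ds<0 \quad \text{for all } \phi_0\in(0,\phi_{0,2}),
\]
so $F_\gamma$ is strictly decreasing on $(0,\phi_{0,2})$, and in particular has at most one zero there.

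The crucial step is then to show that $\phi_{0,2}>\phi_{0,1}$, so that the two regimes overlap. Using the explicit definitions \eqref{eq:FF:phi_0,1} and \eqref{eq:FF:phi_0,2}, I would compute
\[
\frac{\phi_{0,2}^{q-p}}{\phi_{0,1}^{q-p}}=\frac{(5+2q-3p)(p+1)}{(5-p)(q+1)},
\]
and verify that the numerator minus the denominator equals $3(p-1)(q-p)$, which is strictly positive under our assumptions $1<p<5<q$. Hence $\phi_{0,1}<\phi_{0,2}$.

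To conclude, note that on $(0,\phi_{0,1}]\subset(0,\phi_{0,2})$ the function $F_\gamma$ is strictly decreasing and therefore has at most one zero, while on $(\phi_{0,1},\infty)$ one has $F_\gamma<0$ by Lemma \ref{lem:FF:J<0}, so there is no zero there. Combining the two ranges gives the desired conclusion that $F_\gamma$ has at most one zero in $(0,\infty)$.

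The main obstacle is the clean algebraic identification $(5+2q-3p)(p+1)-(5-p)(q+1)=3(p-1)(q-p)$, which is what makes the choice $\gamma=(p+1)/2$ in Lemma \ref{lem:FF:gamma=(p+1)/2} compatible with the threshold appearing in Lemma \ref{lem:FF:J<0}; everything else is either a direct application of the two preceding lemmas or standard differentiation under the integral sign.
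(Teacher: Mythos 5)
Your proposal is correct and follows essentially the same route as the paper: the key algebraic step $(5+2q-3p)(p+1)-(5-p)(q+1)=3(p-1)(q-p)>0$ is exactly the inequality the paper uses (written there as $3(p-1)(p-q)<0$, hence $(5-p)(q+1)<(p+1)(5+2q-3p)$) to conclude $\phi_{0,1}<\phi_{0,2}$, after which the combination of Lemmas \ref{lem:FF:J<0} and \ref{lem:FF:gamma=(p+1)/2} is identical. Your added remarks on differentiating under the integral sign and on $F$ and $F_\gamma$ sharing zeros are harmless refinements of details the paper leaves implicit.
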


\begin{proof}
As $p<q$, we have
\[
3(p-1)(p-q)<0,
\]
hence
\[
(5-p)(q+1) < (p+1)(5+2q-3p).
\]
It implies
\[
-\frac{a_p}{a_q}\frac{(5-p)}{(5-q)} \frac {(q+1)^2}{(p+1)^2}\frac{(p-1)}{(q-1)} < -\frac{a_p}{a_q} \frac{(5+2q-3p)}{(5-q)} \frac{(q+1)}{(p+1)}\frac{(p-1)}{(q-1)}.
\]
Therefore, we have 
\[
\phi_{0,1}^{q-p} < \phi_{0,2}^{q-p}.
\]
We know from Lemma \ref{lem:FF:J<0} that $F_\gamma(\phi_0)<0$ if $\phi_0 \in (\phi_{0,1},\infty)$, and from Lemma \ref{lem:FF:gamma=(p+1)/2} that $F_\gamma(\phi_0)$ is decreasing for all $\phi_0 \in (0,\phi_{0,2})$. As $\phi_{0,1}< \phi_{0,2}$, this implies that $F_\gamma(\phi_0)$ has at most one zero.
\end{proof}

\begin{lemma}
  \label{lem:FFfinal}
  Let $a_p>0$, $a_q > 0$ and $p<5<q$. There exists $\omega_1\in(0,\infty)$ such that
  \[
J(\omega,p,q)>0\text{ for }\omega<\omega_1,\quad
J(\omega_1,p,q)=0,\quad
J(\omega,p,q)<0\text{ for }\omega>\omega_1,
    \]
  and the family of standing waves is of type SU. 
\end{lemma}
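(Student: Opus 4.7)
The plan is to assemble the conclusion from three pieces already in place: the sign of $J$ at $\omega\to 0$, the sign of $J$ at $\omega\to\infty$, and the fact that $J$ vanishes at most once on $(0,\infty)$. By Lemma \ref{lem:J=CF}, $J(\omega,p,q)=C(\phi_0)F(\phi_0)$ with $C(\phi_0)>0$, and since $a_p>0$, $a_q>0$ force $\phi_*=0$, $\phi^*=\infty$, the map $\omega\mapsto \phi_0$ is an increasing bijection from $(0,\infty)$ onto $(0,\infty)$. Hence the sign of $J(\omega,p,q)$ along $\omega$ is controlled by the sign of $F(\phi_0)$, equivalently by the sign of $F_\gamma(\phi_0)=\phi_0^\gamma F(\phi_0)$ for any fixed $\gamma>0$.

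First, the endpoint signs. Proposition \ref{prop:sign_J_0_a_p_positive} applied with $a_p>0$ and $1<p<5$ gives $J(\omega,p,q)>0$ for $\omega$ in a right neighborhood of $0$ (the value $J_0$ is either $0^+$, a finite positive number, or $+\infty$, depending on the position of $p$ relative to $7/3$, but in all three subcases $J$ is strictly positive near zero). Proposition \ref{prop:sign_J^*_a_q_positive} applied with $a_q>0$ and $q>5$ yields $J^*(p,q)=-\infty$, so $J(\omega,p,q)<0$ for all sufficiently large $\omega$. Continuity of $F$ on $(0,\infty)$ produces at least one $\omega_1$ with $J(\omega_1,p,q)=0$, and Lemma \ref{lem:FF:F_gamma_has_at_most_one_zero} upgrades existence to uniqueness. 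This delivers the trichotomy $J>0$ on $(0,\omega_1)$, $J(\omega_1,p,q)=0$, $J<0$ on $(\omega_1,\infty)$.

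The slope criterion recalled in Section \ref{sec:stab-crit} then converts the sign of $J$ into stability on $(0,\omega_1)$ and instability on $(\omega_1,\infty)$. The only delicate step — and the main obstacle of the lemma — is the critical frequency $\omega_1$ itself, where the slope vanishes and the Grillakis-Shatah-Strauss criterion is inconclusive. To deal with it, the plan is to verify the hypothesis of the Comech-Pelinovsky criterion \eqref{eq:comech-pelinovsky}, i.e.\ to show $\partial_\omega J(\omega_1,p,q)\neq 0$. Fixing $\gamma=(p+1)/2$ as in Lemma \ref{lem:FF:gamma=(p+1)/2}, I note that $\partial_{\phi_0}F_\gamma<0$ on $(0,\phi_{0,2})$ and that $F_\gamma<0$ on $(\phi_{0,1},\infty)$ by Lemma \ref{lem:FF:J<0}, with $\phi_{0,1}<\phi_{0,2}$ from the proof of Lemma \ref{lem:FF:F_gamma_has_at_most_one_zero}. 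Hence the unique zero $\phi_{0,c}$ of $F_\gamma$ must lie in $(0,\phi_{0,1})\subset(0,\phi_{0,2})$, so $\partial_{\phi_0}F_\gamma(\phi_{0,c})<0$.

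Since $F_\gamma(\phi_{0,c})=0$, differentiating $J=C(\phi_0)\phi_0^{-\gamma}F_\gamma(\phi_0)$ at $\omega_1$ and using $\partial_\omega\phi_0>0$ from \eqref{eq:dphidomega} gives $\partial_\omega J(\omega_1,p,q)=C(\phi_{0,c})\phi_{0,c}^{-\gamma}\partial_{\phi_0}F_\gamma(\phi_{0,c})\partial_\omega\phi_0(\omega_1)<0$, so in particular $\partial_\omega^2 M(\phi_{\omega_1})\neq 0$. Comech-Pelinovsky then yields instability at $\omega_1$, and combining with the previous step shows the family is stable on $(0,\omega_1)$ and unstable on $[\omega_1,\infty)$, which is exactly the SU type from the convention in the introduction.
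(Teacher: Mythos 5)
Your proof is correct and follows essentially the same route as the paper: positivity near $\omega=0$ from Proposition \ref{prop:sign_J_0_a_p_positive}, negativity for large $\omega$, uniqueness of the zero from Lemma \ref{lem:FF:F_gamma_has_at_most_one_zero}, and the Comech--Pelinovsky criterion at $\omega_1$ via $\partial_{\phi_0}F_\gamma(\phi_{0,c})<0$, which is exactly the argument the paper gives in its subsection on the critical frequency. The only cosmetic differences are that you invoke Proposition \ref{prop:sign_J^*_a_q_positive} for the sign at infinity where the paper reuses Lemma \ref{lem:FF:J<0}, and that you inline the critical-frequency step that the paper defers to a separate subsection.
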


\begin{proof}
  From Proposition \ref{prop:sign_J_0_a_p_positive}, we know that $J(\omega,p,q)>0$ for $\omega$ close to $0$. Combined with Lemmas \ref {lem:FF:J<0} and \ref{lem:FF:F_gamma_has_at_most_one_zero}, this implies the desired result.
\end{proof}

\subsection{The focusing-defocusing case}

In this section, we consider the case $a_p>0$, $a_q < 0$. In this case $\Phi_p>0$ and $\Phi_q<0$.

\begin{lemma}
  \label{lem:FD_sub_sup}
  Let $a_p>0$, $a_q<0$ and $p\leq 5<q $. For any $\omega\in(0,\omega^*)$, we have
  \[
    J(\omega,p,q)>0,
  \]
  and the family of standing waves is of type $S$.
\end{lemma}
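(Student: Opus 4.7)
The plan is to show that in this parameter regime, the sign of $F(\phi_0)$ is positive by a direct sign analysis of the numerator and denominator of the integrand in \eqref{eq:F(phi_0)}, bypassing the more delicate refactorisation arguments used in the focusing-focusing case.

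First I would record the signs of the basic building blocks $\Phi_p$ and $\Phi_q$. Since $a_p>0$ and $1-s^{p-1}>0$ for $s\in(0,1)$, the expression \eqref{eq:phi_p,phi_q} gives $\Phi_p>0$. Similarly $a_q<0$ together with $1-s^{q-1}>0$ yields $\Phi_q<0$. Combining these with the hypothesis $p\leq 5<q$, the weights $(5-p)\geq 0$ and $(5-q)<0$ make both products non-negative and, in fact,
\[
(5-p)\Phi_p+(5-q)\Phi_q\geq (5-q)\Phi_q>0\qquad \text{for all } s\in(0,1).
\]

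Next I would argue that the denominator $(\Phi_p+\Phi_q)^{3/2}$ is a well-defined positive quantity. The first integral \eqref{eq:firstintegral} of the profile equation yields, after the substitution $\phi=\phi_0 s$ and use of \eqref{eq:omega_in_function_of_phi_0},
\[
|\phi_x|^2 = s^2(\Phi_p+\Phi_q),
\]
so $\Phi_p+\Phi_q\geq 0$ with equality only at $s=1$ (where $\phi_x=0$). Hence $\Phi_p+\Phi_q>0$ on $(0,1)$, which also confirms that the integral defining $M(\phi_\omega)$ and its $\omega$-derivative are meaningful on $(\phi_*,\phi^*)$.

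Putting these two observations together, the integrand of $F(\phi_0)$ is strictly positive on $(0,1)$, so $F(\phi_0)>0$ for every $\phi_0\in(\phi_*,\phi^*)=(0,\phi^*)$. Since $C(\phi_0)>0$ by \eqref{newC} and \eqref{eq:dphidomega}, Lemma~\ref{lem:J=CF} yields $J(\omega,p,q)>0$ for every $\omega\in(0,\omega^*)$. Applying the slope criterion recalled in Section~\ref{sec:stab-crit} then gives orbital stability of $\phi_\omega$ for every $\omega\in(0,\omega^*)$, which is the type~S statement. There is essentially no obstacle here: the hypothesis $p\leq 5<q$ is precisely what makes the two terms in the numerator of the integrand of opposite weighting and same-signed contributions, so no competing-signs analysis (as in Lemmas~\ref{lem:FF:J<0}--\ref{lem:FFfinal}) is required, and the only point that needs a brief justification is the positivity of $\Phi_p+\Phi_q$, which follows from the first integral of the ODE.
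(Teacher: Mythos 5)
Your proof is correct and follows essentially the same route as the paper: in the focusing--defocusing regime $\Phi_p>0$ and $\Phi_q<0$, so with $p\leq 5<q$ both terms $(5-p)\Phi_p$ and $(5-q)\Phi_q$ are non-negative (the latter strictly positive), whence $F(\phi_0)>0$ and $J=CF>0$. Your extra verification that $\Phi_p+\Phi_q>0$ via the first integral is a harmless elaboration of a fact the paper takes for granted from the derivation of $F$ in Lemma~\ref{lem:J=CF}.
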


\begin{proof}
We have $5-p\geq 0$, and $5-q<0$. Therefore $F(\phi_0)>0$ for any $\phi_0\in(0,\phi^*)$, which gives the desired result.
\end{proof}

\begin{lemma} \label{lem:FD:gamma=(p+1)/2}
  Let  $a_p>0$, $a_q<0$ and $p<q\leq 5$. Let $\gamma=\frac{p+1}{2}$.
Then the integrand $I_\gamma$ of $F_\gamma$  defined in \eqref{eq:def-I-gamma} verifies
\[
  \frac{\partial I_{\gamma}}{\partial \phi_0} >0
\]
for all $\phi_0 \in (0, \phi^*)$.
\end{lemma}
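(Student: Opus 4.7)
The plan is to apply Lemma \ref{lem:derivative_of_I} with the particular choice $\gamma=\frac{p+1}{2}$, which kills the coefficient of $\Phi_p$ in the first factor of the numerator: we get $2\gamma-(p+1)=0$ and $2\gamma-(q+1)=p-q$. So the numerator of $\partial_{\phi_0}I_\gamma$ reduces to
\[
N=(5-q)(p-q)\Phi_q(\Phi_p+\Phi_q)-3(q-p)^2\Phi_p\Phi_q,
\]
and factoring $-(q-p)\Phi_q$ gives
\[
N=-(q-p)\Phi_q\bigl[(5+2q-3p)\Phi_p+(5-q)\Phi_q\bigr].
\]
In the focusing-defocusing regime ($a_p>0$, $a_q<0$) we have $\Phi_p>0$, $\Phi_q<0$, and $\Phi_p+\Phi_q>0$ (as observed during the derivation of $M(\phi)$ in Lemma \ref{lem:J=CF}). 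Hence $-(q-p)\Phi_q>0$, so the sign of $\partial_{\phi_0}I_\gamma$ coincides with the sign of the bracket $(5+2q-3p)\Phi_p+(5-q)\Phi_q$.

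Next I would substitute the explicit formulas \eqref{eq:phi_p,phi_q} and rearrange so that $\phi_0$ only appears through $\phi_0^{q-p}$. The case $q=5$ is immediate since the bracket reduces to $(5+2q-3p)\Phi_p>0$. For $q<5$, after dividing by the positive quantity $2\phi_0^{p+1}(1-s^{q-1})$, positivity of the bracket is equivalent to
\[
(5+2q-3p)\,\frac{a_p}{p+1}\,\frac{1-s^{p-1}}{1-s^{q-1}}>(q-5)\,\frac{a_q}{q+1}\,\phi_0^{q-p},
\]
where the right-hand side is non-negative because $q-5<0$ and $a_q<0$.

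The final step is a pair of matched bounds. Lemma \ref{lem:f_bijective} gives $\frac{1-s^{p-1}}{1-s^{q-1}}>\frac{p-1}{q-1}$ for $s\in(0,1)$, which bounds the left-hand side from below by $(5+2q-3p)\frac{a_p}{p+1}\frac{p-1}{q-1}$. On the other hand, for $\phi_0<\phi^*$ the explicit formula \eqref{eq:formula_of_phi*} yields
\[
(q-5)\frac{a_q}{q+1}\phi_0^{q-p}<(q-5)\frac{a_q}{q+1}\Bigl(-\frac{a_p}{a_q}\frac{p-1}{q-1}\frac{q+1}{p+1}\Bigr)=(5-q)\frac{a_p}{p+1}\frac{p-1}{q-1}.
\]
The inequality then collapses to $5+2q-3p>5-q$, i.e.\ to $q>p$, which is the standing hypothesis. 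Thus $\partial_{\phi_0}I_\gamma>0$ throughout $(0,\phi^*)$.

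The computation is almost mechanical once the right $\gamma$ is selected; the only genuine subtlety is the choice $\gamma=(p+1)/2$, which is exactly what removes the obstructing $\Phi_p^2$ term so that $\Phi_q$ (the term with the "wrong" sign) can be factored out cleanly. Matching the universal lower bound coming from Lemma \ref{lem:f_bijective} against the upper bound coming from $\phi_0<\phi^*$ is what makes the endpoint coefficients collide on the single harmless inequality $q>p$.
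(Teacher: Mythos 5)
Your proof is correct, but it takes a longer route than necessary, and the paper's own proof is essentially a two-line observation. Both start from Lemma \ref{lem:derivative_of_I} with $\gamma=\frac{p+1}{2}$, which reduces the numerator to $N=(5-q)(p-q)\Phi_q(\Phi_p+\Phi_q)-3(p-q)^2\Phi_p\Phi_q$. At that point the paper simply notes that in the focusing--defocusing regime each of the two terms already has a definite sign: since $5-q\geq 0$, $p-q<0$, $\Phi_q<0$ and $\Phi_p+\Phi_q>0$, the first term is $\geq 0$, and since $\Phi_p\Phi_q<0$ the second term is $>0$; hence $N>0$ and the lemma follows immediately. You instead factor $N=-(q-p)\Phi_q\bigl[(5+2q-3p)\Phi_p+(5-q)\Phi_q\bigr]$ and then prove positivity of the bracket by combining the lower bound $\frac{1-s^{p-1}}{1-s^{q-1}}>\frac{p-1}{q-1}$ from Lemma \ref{lem:f_bijective} with the upper bound $\phi_0^{q-p}<(\phi^*)^{q-p}$ from \eqref{eq:formula_of_phi*}, the two endpoint constants colliding on $q>p$. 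Every step of this checks out (the factorization, the sign of $-(q-p)\Phi_q$, and the matched bounds are all right), and it is the same technique the paper deploys where it is actually needed (e.g.\ Lemma \ref{lem:DF:gamma=(q+1)/2} and the $\phi_{0,2}$ threshold in Lemma \ref{lem:FF:gamma=(p+1)/2}); what it buys here is a slightly sharper statement (positivity of the bracket itself, uniformly on $(0,\phi^*)$), but that information is not used, and the direct sign inspection of the unfactored numerator makes the extra machinery superfluous in this particular sign configuration.
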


\begin{proof}
From Lemma \ref{lem:derivative_of_I} with $\gamma=\frac{p+1}{2}$, we have
\[
\frac{\partial I_{\gamma}}{\partial \phi_0}=\frac12\phi_0^{\gamma-1}
\left( \frac{\left((5-q)(p-q)\Phi_q \right)(\Phi_p+\Phi_q)-3(p-q)^2\Phi_p\Phi_q
      }{\left(   \Phi_p+\Phi_q
        \right)^{\frac52}}\right ).
        \]
Since $ 5-q\geq 0$,  $p-q<0$ and $\Phi_q<0$, we have $\frac{\partial I_{\gamma}}{\partial \phi_0}>0$ for any $\phi_0 \in (0, \phi^*)$.
\end{proof}

\begin{lemma} \label{lem:FD:gamma=p-q+3}
  Let  $a_p>0$, $a_q<0$ and $5<p<q$. Let $\gamma=p-q+3$.
Then the integrand $I_\gamma$ of $F_\gamma$  defined in \eqref{eq:def-I-gamma} verifies
\[
  \frac{\partial I_{\gamma}}{\partial \phi_0} >0
\]
for all $\phi_0 \in (0, \phi^*)$.
\end{lemma}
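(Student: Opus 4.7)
The plan is to apply Lemma~\ref{lem:derivative_of_I} with $\gamma = p-q+3$ and show that the resulting numerator of $\partial_{\phi_0} I_\gamma$ is a positive definite quadratic form in $(\Phi_p, \Phi_q)$. First I would compute $2\gamma - (p+1) = p - 2q + 5$ and $2\gamma - (q+1) = 2p - 3q + 5$, so that expanding the bracket
\[
\bigl((5-p)(p-2q+5)\Phi_p + (5-q)(2p-3q+5)\Phi_q\bigr)(\Phi_p + \Phi_q) - 3(q-p)^2 \Phi_p \Phi_q
\]
produces $A \Phi_p^2 + B \Phi_q^2 + C \Phi_p \Phi_q$ with
\[
A = (p-5)(2q-p-5), \qquad B = (q-5)(3q-2p-5), \qquad C = 2(p-5)(3q-2p-5).
\]
The factorization of $C$ is the only nontrivial identification: grouping all the cross terms yields $-2(2p^2 - 3pq - 5p + 15q - 25)$, which factors as $2(p-5)(3q-2p-5)$ since $p=5$ is a root.

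Under the hypothesis $5 < p < q$ one reads off $A, B, C > 0$ directly; in particular $3q - 2p - 5 > p - 5 > 0$. Moreover $a_p > 0$ and $a_q < 0$ give $\Phi_p > 0 > \Phi_q$, so the cross term $C \Phi_p \Phi_q$ is strictly negative and the content of the lemma reduces to showing that the quadratic form $AX^2 + BY^2 + CXY$ is positive definite.

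For this I would verify that the discriminant satisfies $4AB > C^2$ by direct computation:
\[
4AB - C^2 = 4(p-5)(3q-2p-5)\bigl[(2q-p-5)(q-5) - (p-5)(3q-2p-5)\bigr] = 8(p-5)(3q-2p-5)(q-p)^2,
\]
where the inner bracket simplifies to $2(q-p)^2$ after expansion. This is strictly positive on our parameter range, so combined with $A > 0$ the quadratic form is positive definite. Consequently the numerator is strictly positive, and since $\phi_0^{\gamma-1} > 0$ and $(\Phi_p + \Phi_q)^{5/2} > 0$ (the latter being proportional to $|\phi_x|^2$ from the first integral \eqref{eq:firstintegral}), we conclude $\partial_{\phi_0} I_\gamma > 0$ on the whole interval $(0, \phi^*)$.

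The main obstacle is purely algebraic bookkeeping: identifying the clean factored form $C = 2(p-5)(3q-2p-5)$ and the discriminant identity $4AB - C^2 = 8(p-5)(3q-2p-5)(q-p)^2$. The specific choice $\gamma = p-q+3$ is motivated retrospectively precisely by this: it is the value that makes all three coefficients $A, B, C$ share a common factor of $(p-5)$ (together with $(3q-2p-5)$ for $B$ and $C$), rendering the sign analysis transparent.
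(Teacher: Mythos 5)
Your proposal is correct and follows essentially the same route as the paper: apply Lemma~\ref{lem:derivative_of_I} with $\gamma=p-q+3$, recognize the numerator as a quadratic form in $(\Phi_p,\Phi_q)$ (the paper factors out $\Phi_p^2$ and treats it as a polynomial in $\Phi_q/\Phi_p$, which is the same thing), and conclude via positivity of the leading coefficient together with the discriminant identity $4AB-C^2=8(p-5)(3q-2p-5)(q-p)^2>0$, which matches the paper's $\Delta=-8(5-p)(2p-3q+5)(p-q)^2<0$. All the algebraic identifications you flag (the factored cross coefficient and the inner bracket $2(q-p)^2$) check out.
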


\begin{proof}
Let $\gamma=p-q+3$.
\[
\frac{\partial I_{\gamma}}{\partial \phi_0}=\frac12\phi_0^{\gamma-1}
\left( \frac{\left((5-p)(p-2q+5)\Phi_p+(5-q)(2p-3q+5)\Phi_q \right)(\Phi_p+\Phi_q)-3(p-q)^2\Phi_p\Phi_q
      }{\left(   \Phi_p+\Phi_q
        \right)^{\frac52}}\right ).
  \]
  The sign of $\frac{\partial I_{\gamma}}{\partial \phi_0}$ is the same as the sign of the numerator of the fraction. Factoring out $\Phi_p^2$, the sign is the same as the one of the
second order polynomial in $\frac{\Phi_q}{\Phi_p}$ given by
\[
    (5-q)(2p-3q+5) \left( \frac{\Phi_q}{\Phi_p}\right)^2 +2(5-p)(2p-3q+5) \left( \frac{\Phi_q}{\Phi_p}\right)+(5-p)(p-2q+5).
\]
As $2p-3q+5<0$ and $5-q<0$,  the coefficient of the term of order $2$ is positive. Therefore to show that the polynomial is positive, it is sufficient to show that the discriminant $\Delta$, given by
\begin{align*}
\Delta&=4(5-p)(2p-3q+5)((5-p)(2p-3q+5)-(5-q)(p-2q+5)),\\
&=-8(5-p)(2p-3q+5)(p-q)^2,
\end{align*}
is negative.
We have $2p-3q+5<0$ and $5-p<0$, therefore $\Delta<0$.
This concludes the proof.
\end{proof}

\begin{lemma}
  \label{lem:FDfinal}
  Let $a_p>0$, $a_q < 0$.
  \begin{itemize}
  \item Let $p<q\leq 5$. Then for any $\omega\in(0,\omega^*)$, we have
  \[
    J(\omega,p,q)>0,
  \]
  and the family of standing waves is of type $S$.
    \item Let $5<p<q$. Then 
    there exist $\omega_1\in(0,\infty)$ such that
    \[
      J(\omega,p,q)<0\text{ for }\omega<\omega_1,\quad J(\omega_1,p,q)=0,\quad J(\omega,p,q)>0\text{ for }\omega>\omega_1,
    \]
    and the family of standing waves is of type US.
  \end{itemize}
\end{lemma}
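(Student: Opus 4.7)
The plan is to exploit the decomposition $J(\omega,p,q)=C(\phi_0)\phi_0^{-\gamma}F_\gamma(\phi_0)$ established just before Lemma \ref{lem:derivative_of_I}, together with the fact, from Lemma \ref{lem:J=CF} and \eqref{eq:dphidomega}, that $\omega\mapsto\phi_0$ is a $C^1$ increasing bijection from $(0,\omega^*)$ onto $(0,\phi^*)$. Since $C(\phi_0)>0$ and $\phi_0^{-\gamma}>0$, the sign of $J$ coincides with the sign of $F_\gamma$ for any admissible $\gamma$. The task then reduces to choosing $\gamma$ in each subcase so that the pointwise-in-$s$ inequalities of Lemmas \ref{lem:FD:gamma=(p+1)/2} and \ref{lem:FD:gamma=p-q+3} upgrade, by integration against the positive measure $s\,ds$, to strict monotonicity of $F_\gamma$ on all of $(0,\phi^*)$, and then to pin down the sign using the boundary information obtained in Section \ref{sec:endpoints}.

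For the subcase $p<q\leq 5$ (which forces $p<5$), I would take $\gamma=\tfrac{p+1}{2}$. Lemma \ref{lem:FD:gamma=(p+1)/2} provides $\partial_{\phi_0}I_\gamma>0$ pointwise in $s\in(0,1)$, so integrating in $s$ yields $\partial_{\phi_0}F_\gamma>0$ on $(0,\phi^*)$. In the range $1<p<5$, every applicable case of Proposition \ref{prop:sign_J_0_a_p_positive} (items (1), (2), or (3)) gives $J(\omega,p,q)>0$ for $\omega$ close to $0$, hence $F_\gamma(\phi_0)>0$ for $\phi_0$ near $0$. Strict monotonicity then propagates this positivity across $(0,\phi^*)$, giving $J>0$ on $(0,\omega^*)$ and the type S classification.

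For the subcase $5<p<q$, I would instead take $\gamma=p-q+3$ and apply Lemma \ref{lem:FD:gamma=p-q+3} in exactly the same way to obtain $\partial_{\phi_0}F_\gamma>0$ on $(0,\phi^*)$. Here the two endpoints contribute opposite signs: Proposition \ref{prop:sign_J_0_a_p_positive}(5) gives $J_0=-\infty$, so $F_\gamma<0$ near $0$, while Proposition \ref{prop:sign_J^*_a_q_negative} (whose hypothesis $5\leq p$ is satisfied) gives $J^*=+\infty$, so $F_\gamma>0$ near $\phi^*$. By continuity and strict monotonicity, $F_\gamma$ has a unique zero $\phi_{0,1}\in(0,\phi^*)$, corresponding to a unique $\omega_1\in(0,\omega^*)$ with $J(\omega_1,p,q)=0$, $J<0$ on $(0,\omega_1)$, and $J>0$ on $(\omega_1,\omega^*)$. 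To handle the critical frequency itself, I would invoke the Comech--Pelinovsky criterion \eqref{eq:comech-pelinovsky}: at $\phi_{0,1}$ we have $F_\gamma(\phi_{0,1})=0$ but $\partial_{\phi_0}F_\gamma(\phi_{0,1})>0$, so by the chain rule $\partial_\omega^2 M(\phi_{\omega_1})=\partial_\omega J(\omega_1)>0\neq 0$, and the standing wave at $\omega_1$ is unstable, yielding the type US classification.

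The main obstacle has in fact already been dispatched in the preceding lemmas: the crux is selecting an auxiliary exponent $\gamma$ so that the sign of the numerator of $\partial_{\phi_0}I_\gamma$ can be determined uniformly in $s\in(0,1)$ and uniformly in $\phi_0\in(0,\phi^*)$. Once this uniform sign is in hand for each subcase, as in Lemmas \ref{lem:FD:gamma=(p+1)/2} and \ref{lem:FD:gamma=p-q+3}, the remainder is a routine combination of monotonicity and endpoint sign analysis, with the Comech--Pelinovsky criterion taking care of the behavior at the unique critical frequency in the US case.
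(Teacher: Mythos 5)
Your proposal is correct and follows essentially the same route as the paper: monotonicity of $F_\gamma$ via Lemmas \ref{lem:FD:gamma=(p+1)/2} and \ref{lem:FD:gamma=p-q+3} (with the same choices of $\gamma$), combined with the endpoint signs from Propositions \ref{prop:sign_J_0_a_p_positive} and \ref{prop:sign_J^*_a_q_negative}, and the Comech--Pelinovsky criterion at the critical frequency (which the paper defers to a separate subsection). No gaps.
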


\begin{proof}
  In both cases, we infer from Lemmas \ref{lem:FD:gamma=(p+1)/2} and \ref{lem:FD:gamma=p-q+3} that for any $\omega\in(0,\omega^*)$, the function
  $\omega\to J(\omega,p,q)$ changes sign (from negative to positive) at most once on $\omega\in(0,\omega^*)$.

To establish the desired conclusion, we consider the values of $J$ close to the endpoints. As $\omega\to0$, we have established in Proposition \ref{prop:sign_J_0_a_p_positive} that for $\omega$ close to $0$, we have
   \[
      J(\omega,p,q)>0\text{ for }p\leq 5, \quad J(\omega,p,q)<0\text{ for }p>5.
    \]
    As $J$ is increasing, this gives the conclusion for the first part of the Lemma.

    For the second part of the Lemma, we look at the limit $\omega\to \omega^*$ (i.e.~$\phi_0\to\phi^*$). From Proposition \ref{prop:sign_J^*_a_q_negative}, for $5<p<q$ and for $\omega$ close to $\omega^*$ we have
     \[
      J(\omega,p,q)>0,
    \]
  which gives the second part of the Lemma.
\end{proof}

\subsection{The defocusing-focusing case}
In this section, we consider the case $a_p<0$, $a_q > 0$. In this case $\Phi_p<0$ and $\Phi_q>0$.

\begin{lemma} \label{lem:DF:gamma=(q+1)/2}
Let  $a_p<0$, $a_q >0$ and $p<q<5$. Let $\gamma=\frac{q+1}{2}$.
If $3q \geq 2p+5$, then the integrand $I_\gamma$ of $F_\gamma$  defined in \eqref{eq:def-I-gamma} verifies
\[
  \frac{\partial I_{\gamma}}{\partial \phi_0} >0
\]
for all $\phi_0 \in (\phi_*, \infty)$.
\end{lemma}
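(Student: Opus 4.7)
The plan is to substitute $\gamma=\frac{q+1}{2}$ into the formula for $\partial I_{\gamma}/\partial \phi_0$ provided by Lemma~\ref{lem:derivative_of_I} and exploit the fact that with this choice the coefficient $2\gamma-(q+1)$ vanishes. Thus the bracket $(5-p)(2\gamma-(p+1))\Phi_p+(5-q)(2\gamma-(q+1))\Phi_q$ collapses to $(5-p)(q-p)\Phi_p$, and the full numerator reduces to
\[
(q-p)\Phi_p\bigl[(5-p)(\Phi_p+\Phi_q)-3(q-p)\Phi_q\bigr].
\]
Expanding the inner bracket and grouping terms, I get $(5-p)\Phi_p+(5+2p-3q)\Phi_q$, so up to the positive factor $\tfrac12\phi_0^{\gamma-1}$ it suffices to determine the sign of
\[
\frac{(q-p)\Phi_p\bigl[(5-p)\Phi_p+(5+2p-3q)\Phi_q\bigr]}{(\Phi_p+\Phi_q)^{5/2}}.
\]

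Next I would carry out the sign analysis. Since $a_p<0$ and $0<s<1$, the definition \eqref{eq:phi_p,phi_q} gives $\Phi_p<0$, whereas $a_q>0$ gives $\Phi_q>0$. Hence $(q-p)\Phi_p<0$ since $q>p$. For the inner bracket, $(5-p)\Phi_p<0$ because $p<q<5$ means $5-p>0$, while the hypothesis $3q\geq 2p+5$ gives $5+2p-3q\leq 0$, so $(5+2p-3q)\Phi_q\leq 0$. The first summand is strictly negative, so the bracket is strictly negative regardless of whether the second summand vanishes. The denominator is positive for $s\in(0,1)$: indeed, from the first integral \eqref{eq:firstintegral} and the change of variables $t=\phi_0 s$ one checks that $s^2(\Phi_p+\Phi_q)=|\phi_x(\phi_0 s)|^2\geq 0$, with strict positivity on $(0,1)$ since $\phi$ is strictly decreasing on $(0,\infty)$. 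Multiplying two strictly negative factors and dividing by a strictly positive one yields $\partial I_{\gamma}/\partial\phi_0>0$ for each fixed $s\in(0,1)$ and every $\phi_0\in(\phi_*,\infty)$.

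There is no real obstacle in this lemma: the choice $\gamma=\frac{q+1}{2}$ is engineered precisely to kill the $\Phi_q$-contribution in the leading bracket, and after that simplification the argument is just tracking signs. The only subtlety worth emphasizing in the write-up is the positivity of $\Phi_p+\Phi_q$ (needed to make the $5/2$-power well-defined), which follows immediately from the first integral of the profile equation.
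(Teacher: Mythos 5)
Your proof is correct and follows essentially the same route as the paper: substituting $\gamma=\frac{q+1}{2}$ to kill the $\Phi_q$-term in the leading bracket, factoring the numerator as $(q-p)\Phi_p\left[(5-p)\Phi_p+(5+2p-3q)\Phi_q\right]$, and concluding by the same sign analysis. Your explicit verification that $\Phi_p+\Phi_q>0$ via the first integral is a small addition the paper leaves implicit, but otherwise the arguments coincide.
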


\begin{proof}
As $\gamma=\frac{q+1}{2}$, from Lemma \ref{lem:derivative_of_I} we have
\begin{align*}
\frac{\partial I_{\gamma}}{\partial \phi_0}&=\frac12\phi_0^{\gamma-1}
 \left( \frac{\left((5-p)(q-p)\Phi_p \right)(\Phi_p+\Phi_q)-3(p-q)^2\Phi_p\Phi_q
       }{\left(   \Phi_p+\Phi_q
         \right)^{\frac52}}\right ),\\
&=\frac12\phi_0^{\gamma-1} \Phi_p (q-p)
 \left( \frac{ (5-p)\Phi_p+(5-p)\Phi_q+3(p-q)\Phi_q}{\left(   \Phi_p+\Phi_q
         \right)^{\frac52}}\right ),\\
&=\frac12\phi_0^{\gamma-1} \Phi_p (q-p)
\left( \frac{ (5-p)\Phi_p+(5+2p-3q)\Phi_q}{\left(   \Phi_p+\Phi_q
         \right)^{\frac52}}\right ).\\       
\end{align*}
As $0<5-p$ and $2p+5-3q \leq 0$ we have
\begin{align*}
(5-p)\Phi_p+(5+2p-3q)\Phi_q &<0.
\end{align*}
As a consequence $\frac{\partial I_{\gamma}}{\phi_0}>0$ for all $\phi_0 \in (\phi_*, \infty)$ when $5+2p-3q\leq 0$, which is the desired conclusion.
\end{proof}

\begin{lemma} \label{lem:DF:gamma=p-q+3}
  Let  $a_p<0$, $a_q > 0$ and  $p<q<5$. Let $\gamma=p-q+3$. If $3q<2p+5$ then the integrand $I_\gamma$ of $F_\gamma$  defined in \eqref{eq:def-I-gamma} verifies
  \[
    \frac{\partial I_{\gamma}}{\partial \phi_0} >0
  \]
  for all $\phi_0 \in (\phi_*, \infty).$
\end{lemma}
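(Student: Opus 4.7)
The plan is to follow the same discriminant-based template as the previous Lemma \ref{lem:FD:gamma=p-q+3}, adapting it to the different sign situation. Applying Lemma \ref{lem:derivative_of_I} with $\gamma = p-q+3$, the quantities $2\gamma-(p+1)$ and $2\gamma-(q+1)$ simplify to $p-2q+5$ and $2p-3q+5$ respectively, so
\[
\frac{\partial I_\gamma}{\partial \phi_0} = \frac12 \phi_0^{\gamma-1} \cdot \frac{N}{(\Phi_p+\Phi_q)^{5/2}},
\]
with
\[
N = \bigl((5-p)(p-2q+5)\Phi_p + (5-q)(2p-3q+5)\Phi_q\bigr)(\Phi_p+\Phi_q) - 3(p-q)^2 \Phi_p\Phi_q.
\]
Since $\Phi_p+\Phi_q>0$ for any admissible profile (otherwise $M(\phi)$ would not be defined), the sign of the derivative coincides with the sign of $N$.

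Next, I would factor out $\Phi_p^2$ and view $N/\Phi_p^2$ as a quadratic in $r = \Phi_q/\Phi_p$. As in the proof of Lemma \ref{lem:FD:gamma=p-q+3}, its coefficients are
\[
(5-q)(2p-3q+5)\, r^2 + 2(5-p)(2p-3q+5)\, r + (5-p)(p-2q+5),
\]
the identification of the middle coefficient being the one algebraic step to verify (both $A+C-3(p-q)^2$ and $2(5-p)(2p-3q+5)$ equal $-4p^2+6pq+10p-30q+50$). The discriminant of this quadratic is therefore exactly the one computed in the previous lemma:
\[
\Delta = -8(5-p)(2p-3q+5)(p-q)^2.
\]

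The key difference with the previous lemma lies in the signs under the current hypotheses $a_p<0$, $a_q>0$, $p<q<5$ and $3q\le 2p+5$. First, $5-p>0$, $5-q>0$, $2p-3q+5\ge 0$, and from $p-2q+5 = (2p-3q+5)+(q-p)$ together with $q>p$ one also gets $p-2q+5>0$. Consequently the leading and constant coefficients of the quadratic are both positive, and $\Delta < 0$ (with strict inequality once one treats the boundary case $3q=2p+5$ by continuity or by noting that $(p-q)^2>0$ still forces strict positivity through the constant term). Hence the quadratic has no real roots and is strictly positive, which yields $N>0$ and therefore $\partial_{\phi_0} I_\gamma > 0$ on $(\phi_*,\infty)$.

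The main obstacle is bookkeeping: verifying that the middle-coefficient identity carries over verbatim from the focusing-defocusing case and, above all, tracking that every one of the four linear factors $5-p$, $5-q$, $p-2q+5$, $2p-3q+5$ has the sign needed to make both the quadratic's end coefficients positive and the discriminant negative, so that the sign of $N$ is the opposite of what it was in Lemma \ref{lem:FD:gamma=p-q+3} despite the formula for $\Delta$ being identical. Once these signs are read off, the conclusion is immediate.
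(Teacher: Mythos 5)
Your proposal is correct and follows essentially the same route as the paper: apply Lemma \ref{lem:derivative_of_I} with $\gamma=p-q+3$, factor $\Phi_p^2$ out of the numerator to obtain the quadratic $(5-q)(2p-3q+5)r^2+2(5-p)(2p-3q+5)r+(5-p)(p-2q+5)$ in $r=\Phi_q/\Phi_p$, check that its leading coefficient is positive, and conclude from $\Delta=-8(5-p)(2p-3q+5)(p-q)^2<0$ that the quadratic, hence the numerator, is positive. Your sign bookkeeping (including $p-2q+5=(2p-3q+5)+(q-p)>0$) and the middle-coefficient identity are verified correctly, so nothing is missing.
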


\begin{proof}
  As $\gamma=p-q+3$, from Lemma \ref{lem:derivative_of_I} we have

\[
\frac{\partial I_{\gamma}}{\partial \phi_0}=\frac12\phi_0^{\gamma-1}
 \left( \frac{\left((5-p)(p-2q+5)\Phi_p+(5-q)(2p-3q+5)\Phi_q \right)(\Phi_p+\Phi_q)-3(p-q)^2\Phi_p\Phi_q
       }{\left(   \Phi_p+\Phi_q
         \right)^{\frac52}}\right ).
\]
If the numerator of the fraction is positive then the derivative is positive. Factorizing out $\Phi_p^2$, the sign of the numerator is the same as the one of the quadratic polynomial in $\frac{\Phi_q}{\Phi_p}$ given by
\[
  (5-q)(2p-3q+5) \left( \frac{\Phi_q}{\Phi_p}\right)^2 +2(5-p)(2p-3q+5) \left( \frac{\Phi_q}{\Phi_p}\right)+(5-p)(p-2q+5).
\]
As $2p-3q+5>0$ and $5-q>0$, the coefficient of the term of order $2$ is positive. Therefore to show that the polynomial is positive, it is sufficient to show that the discriminant $\Delta$, given by
\begin{align*}
\Delta&=4(5-p)(2p-3q+5)((5-p)(2p-3q+5)-(5-q)(p-2q+5)),\\
&=-8(5-p)(2p-3q+5)(p-q)^2,
\end{align*}
is negative.
We have $2p-3q+5>0$ and $5-p>0$, therefore $\Delta<0$.
This concludes the proof.
\end{proof}

\begin{lemma}
  \label{lem:DFfinal}
  Let $a_p<0$, $a_q > 0$ and $p<q<5$.
  \begin{itemize}
  \item If $q\leq 7-2p$,  then for any $\omega\in(0,\infty)$, we have
  \[
    J(\omega,p,q)>0,
  \]
  and the family of standing waves is of type $S$.
    \item If $q> 7-2p$,  then
    there exist $\omega_1\in(0,\infty)$ such that
    \[
      J(\omega,p,q)<0\text{ for }\omega<\omega_1,\quad J(\omega_1,p,q)=0,\quad J(\omega,p,q)>0\text{ for }\omega>\omega_1,
    \]
    and the family of standing waves is of type US.
  \end{itemize}
\end{lemma}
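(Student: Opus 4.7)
The plan is to combine the monotonicity information already packaged in Lemmas \ref{lem:DF:gamma=(q+1)/2} and \ref{lem:DF:gamma=p-q+3} with the endpoint calculations of $J$ provided by Propositions \ref{propJ0} and \ref{prop:sign_J^*_a_q_positive}. The first step is to notice that the two subcases $3q\geq 2p+5$ and $3q<2p+5$ exhaust the hypothesis $p<q<5$, and in each of them one of the two preceding lemmas supplies a value of $\gamma$ (either $\gamma=(q+1)/2$ or $\gamma=p-q+3$) for which $\partial_{\phi_0}I_\gamma(\phi_0,s)>0$ for every $s\in(0,1)$ and every $\phi_0\in(\phi_*,\infty)$. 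Differentiating under the integral sign then gives $\partial_{\phi_0}F_\gamma>0$ on $(\phi_*,\infty)$. Since $J(\omega,p,q)=C(\phi_0)\phi_0^{-\gamma}F_\gamma(\phi_0)$ with $C(\phi_0)>0$ and $\phi_0>0$, and since $\omega\mapsto\phi_0$ is an increasing bijection by \eqref{eq:dphidomega}, the function $J$ shares the sign of $F_\gamma$, and the strict monotonicity of $F_\gamma$ forces $J$ to cross zero at most once, necessarily from negative to positive.

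Next I would read off the behavior of $J$ at the endpoints of the admissible range. Near $\omega=\omega^*=\infty$, the hypothesis $q<5$ together with $a_q>0$ places us in cases (1)--(3) of Proposition \ref{prop:sign_J^*_a_q_positive}, each of which gives $\lim_{\omega\to\infty}J(\omega,p,q)\in(0,\infty]$, so $J>0$ for large $\omega$. Near $\omega=0$ I would use Proposition \ref{propJ0}, observing that the constraint $p<q\leq 7-2p$ automatically forces $3p<7$, so only case (1) of that proposition is relevant when $q\leq 7-2p$: it delivers $J_0(p,q)\geq 0$, with strict inequality precisely when $2p+q<7$ and equality at the boundary $q=7-2p$. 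Conversely, whenever $q>7-2p$, Proposition \ref{propJ0} gives $J_0(p,q)<0$: by case (1)(c) if $p<7/3$, and by case (2) if $p\geq 7/3$.

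All that remains is to assemble these ingredients. When $q\leq 7-2p$, $F_\gamma$ has nonnegative limit at $\phi_*$ and is strictly increasing, so $F_\gamma>0$ on $(\phi_*,\infty)$, hence $J>0$ throughout $(0,\infty)$ and stability follows from the slope criterion of Section \ref{sec:stab-crit}. When $q>7-2p$, $F_\gamma$ is negative near $\phi_*$ and positive near $\infty$, so strict monotonicity produces a unique zero $\phi_{0,1}\in(\phi_*,\infty)$, corresponding to a unique critical frequency $\omega_1\in(0,\infty)$ with the sign pattern announced. Instability on $(0,\omega_1)$ and stability on $(\omega_1,\infty)$ follow directly from the slope criterion. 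At $\omega=\omega_1$ itself, the chain rule $\partial_\omega F_\gamma=\partial_{\phi_0}F_\gamma\cdot\partial_\omega\phi_0$ together with \eqref{eq:dphidomega} yields $\partial_\omega J(\omega_1)>0$, so $\partial^2_\omega M(\phi_{\omega_1})\neq 0$ and the Comech--Pelinovsky criterion \eqref{eq:comech-pelinovsky} upgrades this to instability at the critical frequency, which gives the type $US$ classification. I do not anticipate a serious obstacle here: the argument is essentially an assembly of pieces already in place, and the only subtle bookkeeping is the boundary case $q=7-2p$, where $J_0=0$ combines with the strict increase of $F_\gamma$ to still yield strict positivity on the open interval.
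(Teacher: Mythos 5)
Your proposal is correct and follows essentially the same route as the paper: monotonicity of $F_\gamma$ from Lemmas \ref{lem:DF:gamma=(q+1)/2} and \ref{lem:DF:gamma=p-q+3} (choosing $\gamma$ according to the sign of $3q-2p-5$), combined with the endpoint signs from Propositions \ref{propJ0} and \ref{prop:sign_J^*_a_q_positive}. Your treatment is in fact slightly more explicit than the paper's on two points it defers or glosses over: the boundary case $q=7-2p$ (where $J_0=0$ but strict monotonicity still gives positivity on the open interval) and the Comech--Pelinovsky degeneracy check at $\omega_1$, which the paper handles in a separate subsection.
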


\begin{proof}
  Lemmas \ref{lem:DF:gamma=(q+1)/2} and \ref{lem:DF:gamma=p-q+3} implies $F_\gamma$ changes sign only once on $(0,\omega^*)$. From Proposition \ref{propJ0}, we know that as $\omega\to0$, we have
    \(
    J(\omega,p,q)>0
  \)
when $q\leq 7-2p$, which gives the conclusion for the first part of the Lemma. When $q> 7-2p$, from Proposition \ref{propJ0}, we know that as $\omega\to0$, we have
    \(
    J(\omega,p,q)<0.
  \)
  Since, from Proposition \ref{prop:sign_J^*_a_q_positive}, we know that
  \(
    J(\omega,p,q)>0
  \)
  for large $\omega$, the conclusion follows for the second part of the Lemma.
\end{proof}

\begin{lemma}
  Let $a_p<0$, $a_q > 0$ and $p<5\leq q $.
  For any $\omega\in(0,\infty)$, we have
  \[
    J(\omega,p,q)<0,
  \]
  and the family of standing waves is of type $U$.
\end{lemma}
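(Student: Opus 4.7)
The plan is to read off the sign of the integrand in $F(\phi_0)$ directly from the hypotheses, without needing any monotonicity argument or auxiliary parameter $\gamma$. This is the easiest of the three sub-cases of the defocusing-focusing regime, because the two terms in the numerator already have the same sign.

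First, I would invoke Lemma \ref{lem:J=CF} to reduce the problem to showing $F(\phi_0)<0$ for every $\phi_0\in(\phi_*,\infty)$, since $C(\phi_0)>0$. In this regime the assumptions $a_p<0$ and $a_q>0$ give $\Phi_p<0$ and $\Phi_q>0$ for $s\in(0,1)$, and moreover $\Phi_p+\Phi_q>0$ on $(0,1)$: indeed this sum equals $\phi_0^2\phi_x^2$ evaluated at the point $x$ where $\phi(x)=\phi_0 s$ (after multiplication by $\phi_0^2$ and using \eqref{eq:firstintegral} together with \eqref{eq:omega_in_function_of_phi_0}), and $\phi_x$ cannot vanish strictly between $0$ and $\phi_0$ since $\phi$ is strictly decreasing on $(0,\infty)$. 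Hence the denominator $(\Phi_p+\Phi_q)^{3/2}$ is real and positive on $(0,1)$.

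Second, I would analyse the numerator $(5-p)\Phi_p+(5-q)\Phi_q$. Since $p<5$, the factor $5-p$ is positive, so $(5-p)\Phi_p<0$ on $(0,1)$. Since $q\geq 5$, the factor $5-q$ is non-positive, so $(5-q)\Phi_q\leq 0$ on $(0,1)$, with strict inequality whenever $q>5$. In either case the sum is strictly negative on $(0,1)$, so the integrand of $F(\phi_0)$ is strictly negative on $(0,1)$ and therefore $F(\phi_0)<0$ for every $\phi_0\in(\phi_*,\infty)$. Combining with $C(\phi_0)>0$, we obtain $J(\omega,p,q)<0$ for every $\omega\in(0,\infty)$.

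Finally, by the stability criterion recalled in Section \ref{sec:stab-crit}, the standing wave $e^{i\omega t}\phi_\omega$ is orbitally unstable for every $\omega\in(0,\infty)$, so the family is of type $U$, as claimed. There is no genuine obstacle here: the sign of both summands in the numerator is fixed by the assumptions $a_p<0$, $a_q>0$, $p<5\leq q$, and the strict positivity of $\Phi_p+\Phi_q$ on $(0,1)$ comes for free from the existence of the profile.
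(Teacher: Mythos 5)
Your proof is correct and follows essentially the same route as the paper's: the paper's argument for this case is exactly the one-line observation that $5-p>0$, $5-q\leq 0$, $\Phi_p<0$, $\Phi_q>0$ force the numerator of the integrand of $F(\phi_0)$ to be negative, hence $F(\phi_0)<0$ and $J<0$. Your additional verification that $\Phi_p+\Phi_q>0$ (so the denominator is well-defined and positive) is a point the paper leaves implicit from the derivation of Lemma \ref{lem:J=CF}, where $\Phi_p+\Phi_q$ arises as $\phi_x^2/s^2$ along the profile.
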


\begin{proof}
We have $ 5-p>0$, $5-q\leq 0$, $\Phi_p<0$ and $\Phi_q>0$. Therefore we directly see on the expression \eqref{eq:F(phi_0)} of $F(\phi_0)$ that $F(\phi_0)<0$, which gives the desired result.
\end{proof}

\begin{lemma}
  Let  $a_p<0$, $a_q > 0$ and $5\leq p<q$.
  For any $\omega\in(0,\infty)$, we have
  \[
    J(\omega,p,q)<0,
  \]
  and the family of standing waves is of type $U$.
\end{lemma}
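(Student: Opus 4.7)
The plan is to establish $F(\phi_0)<0$ directly from the integral representation \eqref{eq:F(phi_0)} by showing that the numerator of the integrand is strictly negative for every $s\in(0,1)$, without needing the derivative/$F_\gamma$ machinery used in the more delicate cases. With $5\leq p<q$, $a_p<0$, $a_q>0$ and $\Phi_p,\Phi_q$ as in \eqref{eq:phi_p,phi_q}, I would introduce
\[
A_p=(5-p)\frac{2a_p}{p+1}\geq 0,\qquad A_q=(5-q)\frac{2a_q}{q+1}<0,
\]
so that the numerator of the integrand reads
\[
N(s)=A_p\,\phi_0^{p+1}(1-s^{p-1})+A_q\,\phi_0^{q+1}(1-s^{q-1}).
\]
When $p=5$ we have $A_p=0$ and $N(s)<0$ is immediate; I would then treat the remaining case $p>5$.

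For $p>5$, the inequality $N(s)<0$ is equivalent, after dividing by $\phi_0^{p+1}$ and by $(1-s^{q-1})>0$, to
\[
\phi_0^{q-p}>\frac{A_p}{-A_q}\cdot\frac{1-s^{p-1}}{1-s^{q-1}}=\frac{(p-5)|a_p|(q+1)}{(q-5)\,a_q(p+1)}\cdot\frac{1-s^{p-1}}{1-s^{q-1}}.
\]
Lemma \ref{lem:f_bijective} tells us $\frac{1-s^{q-1}}{1-s^{p-1}}\in\left(1,\frac{q-1}{p-1}\right)$, hence $\frac{1-s^{p-1}}{1-s^{q-1}}<1$, so it is enough to verify the uniform bound
\[
\phi_0^{q-p}>\frac{(p-5)|a_p|(q+1)}{(q-5)\,a_q(p+1)}.
\]
This I would get for free from admissibility: the constraint $\omega>0$ is equivalent to $\phi_0>\phi_*$, and \eqref{eq:formula_of_phi*} gives $\phi_*^{q-p}=\frac{|a_p|(q+1)}{a_q(p+1)}$. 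Since $5\leq p<q$ implies $\frac{p-5}{q-5}<1$, one obtains
\[
\phi_0^{q-p}>\phi_*^{q-p}=\frac{|a_p|(q+1)}{a_q(p+1)}>\frac{p-5}{q-5}\cdot\frac{|a_p|(q+1)}{a_q(p+1)},
\]
which is precisely the required inequality.

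Consequently $N(s)<0$ for every $s\in(0,1)$. Since the first integral \eqref{eq:firstintegral} identifies $s^{2}(\Phi_p+\Phi_q)$ with $\phi_x^{2}$ at the point where $\phi=\phi_0 s$, the denominator $(\Phi_p+\Phi_q)^{3/2}$ is well-defined and strictly positive, so the integrand in \eqref{eq:F(phi_0)} is strictly negative, $F(\phi_0)<0$, and therefore $J(\omega,p,q)=C(\phi_0)F(\phi_0)<0$ by Lemma \ref{lem:J=CF}; the family is of type $U$. I do not foresee a genuine obstacle: contrary to the borderline focusing-focusing case $p<5<q$, the mere admissibility threshold $\phi_*$ already dominates the critical value beyond which the numerator has a constant sign, so a direct pointwise estimate suffices. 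The only subtlety is the boundary case $p=5$, which collapses the argument trivially.
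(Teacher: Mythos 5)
Your proof is correct and follows essentially the same route as the paper's: both reduce to showing the numerator $(5-p)\Phi_p+(5-q)\Phi_q$ is pointwise negative by combining the admissibility bound $\phi_0^{q-p}>\phi_*^{q-p}=-\frac{a_p}{a_q}\frac{q+1}{p+1}$, the elementary inequality $\frac{p-5}{q-5}<1$, and the bound $\frac{1-s^{p-1}}{1-s^{q-1}}<1$ from Lemma \ref{lem:f_bijective}. The only cosmetic difference is that you split off the trivial case $p=5$, which the paper's chain of inequalities absorbs uniformly.
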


\begin{proof}
We know that $\phi_*<\phi_0$, therefore $-\frac{a_p}{a_q}\frac{q+1}{p+1}<\phi_0^{q-p}$.
As $\frac{5-p}{5-q}<1$, we have $-\frac{a_p}{a_q}\frac{q+1}{p+1}\frac{5-p}{5-q}<\phi_0^{q-p} $.
From Lemma \ref{lem:f_bijective} we know that $\frac{1-s^{p-1}}{1-s^{q-1}}<1$, hence 
\[
-\frac{a_p}{a_q}\frac{(q+1)}{(p+1)}\frac{(5-p)}{(5-q)}\frac{1-s^{p-1}}{1-s^{q-1}}<\phi_0^{q-p},
\]
which is equivalent to
\[
\frac{(5-p)}{(5-q)}<-\frac{\Phi_q}{\Phi_p},
\]
which implies 
\[
(5-p)\Phi_p+(5-q)\Phi_q<0.
\]
This implies that $F(\phi_0)<0$ which gives the desired result.
\end{proof}

\subsection{The critical frequency}

Observe that, as a by-product of the analysis of the previous sections, we always have instability at the critical frequency when there is a stability change. Indeed, we have
\[
  \partial_\omega^2M(\phi_\omega)=\partial_\omega\left(C(\phi_0)\phi_0^{-\gamma}F_\gamma(\phi_0)\right)=
  \partial_\omega\phi_0\left(\partial_{\phi_0}\left(C(\phi_0)\phi_0^{-\gamma}\right) F_\gamma(\phi_0)+C(\phi_0)\phi_0^{-\gamma}\partial_{\phi_0}F_\gamma(\phi_0)\right).
  \]
  At the stability change, we have $F(\phi_0)=0$. Therefore, at the stability change,
  \[
\partial_\omega^2M(\phi_\omega)=(\partial_\omega\phi_0)C(\phi_0)\phi_0^{-\gamma}\partial_{\phi_0}F_\gamma(\phi_0).
\]
As we have shown that in this case $\partial_{\phi_0}F_\gamma(\phi_0)\neq 0$, the criterion \eqref{eq:comech-pelinovsky} holds.

\section{Numerical experiments}
\label{sec:numerics}
To explore further the stability/instability of standing waves, we have performed a series of numerical experiments in the case $a_p<0$, $a_q>0$, $1<p<q<5$.

The Python language and the specific libraries Numpy, Scipy and Matplotlib have been used to perform the experiments. The code is made available in \cite{KfLeTs-github}.

\subsection{The critical surface for stability/instability}

We first analyzed the critical surface in $(\omega,p,q)$ separating instability from stability. To this aim, we first have implemented the calculation of $J(\omega,p,q)$. The function \verb+integrate.quad+ has been used to perform the integration. While the results are overall satisfactory, in some cases the function returned incorrect results, with problems increasing as $\omega$ was taken closer to $0$.

To estimate the critical $\omega $ at given $(p,q)$, we have used the classical bisection method, which has the advantage of being very robust. The algorithm is divided into two parts.

First, we find an initial interval $[\omega_0,\omega_1]$ in which we are sure that $\omega\to J(\omega,p,q)$ changes sign. A natural choice for $\omega_0$ is $0$. To find a suitable $\omega_1$, we simply start with $\omega_1=1$ and test if $J(\omega_1,p,q)>0$. If not, we replace $\omega_1$ by $2\omega_1$ and repeat until $J(\omega_1,p,q)>0$. To avoid running an infinite loop, we break it when $\omega_1>10^{10}$ and do not search for $\omega_c$ in these cases. Second, we apply the bisection method to search for a root of $J(\omega,p,q)$ inside $[\omega_0,\omega_1]$. As this approach, while being efficient, is also relatively slow, we took advantage of the computer power of our department to run computations in parallel on the $(p,q)\in[1,5]\times [1,5]$ grid with $dp=dq=0.01$.

We have represented the critical surface
\[
  \{(p,q,\omega_c(p,q))\}
\]
for $a_p=-1$ and three different values of $a_q=1/2,1,2$ in Figure \ref{fig:critical_surface}. 
\begin{figure}[htpb!]
    \centering \includegraphics[width=.3\textwidth]{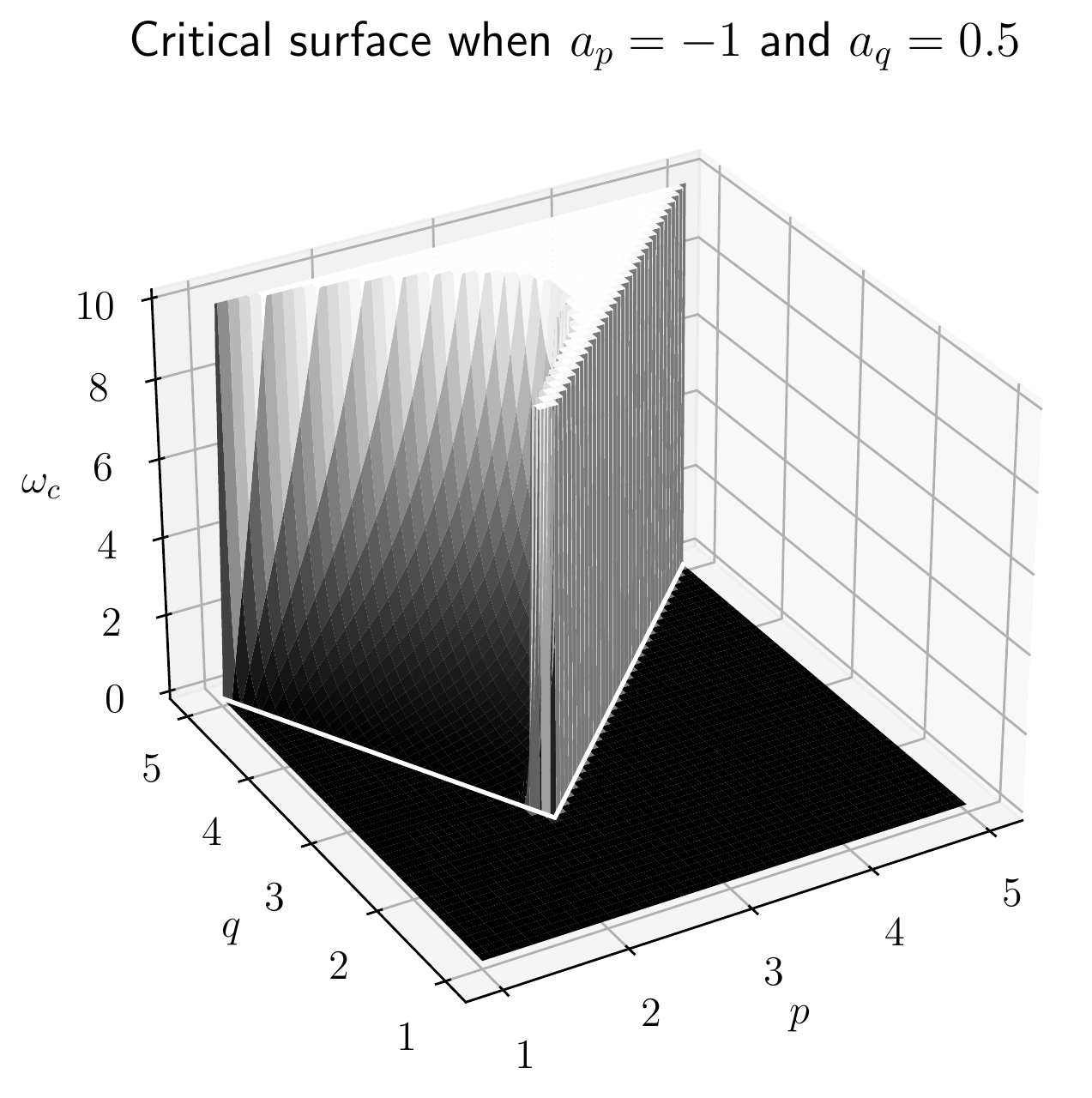}~\includegraphics[width=.3\textwidth]{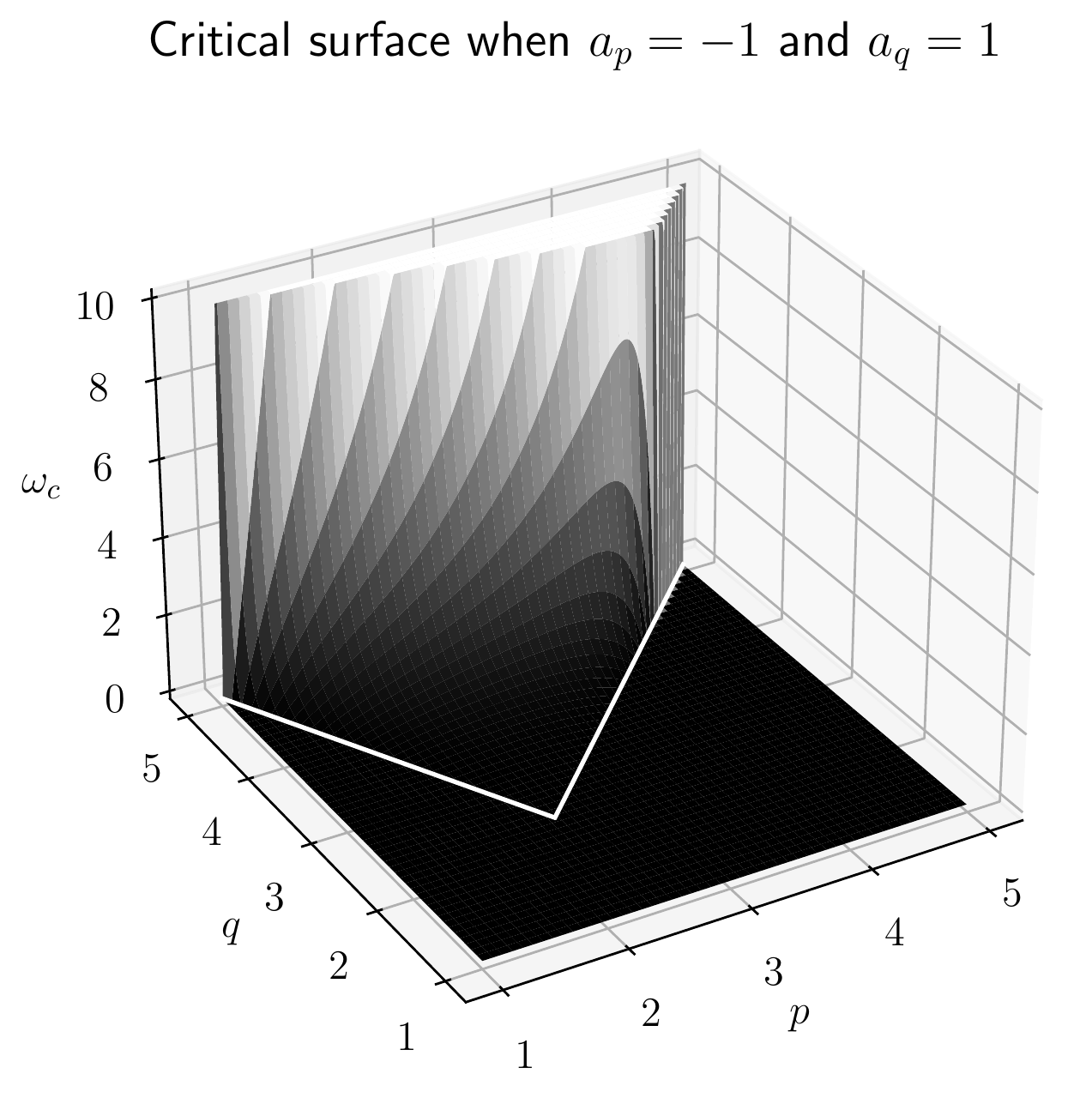}~\includegraphics[width=.3\textwidth]{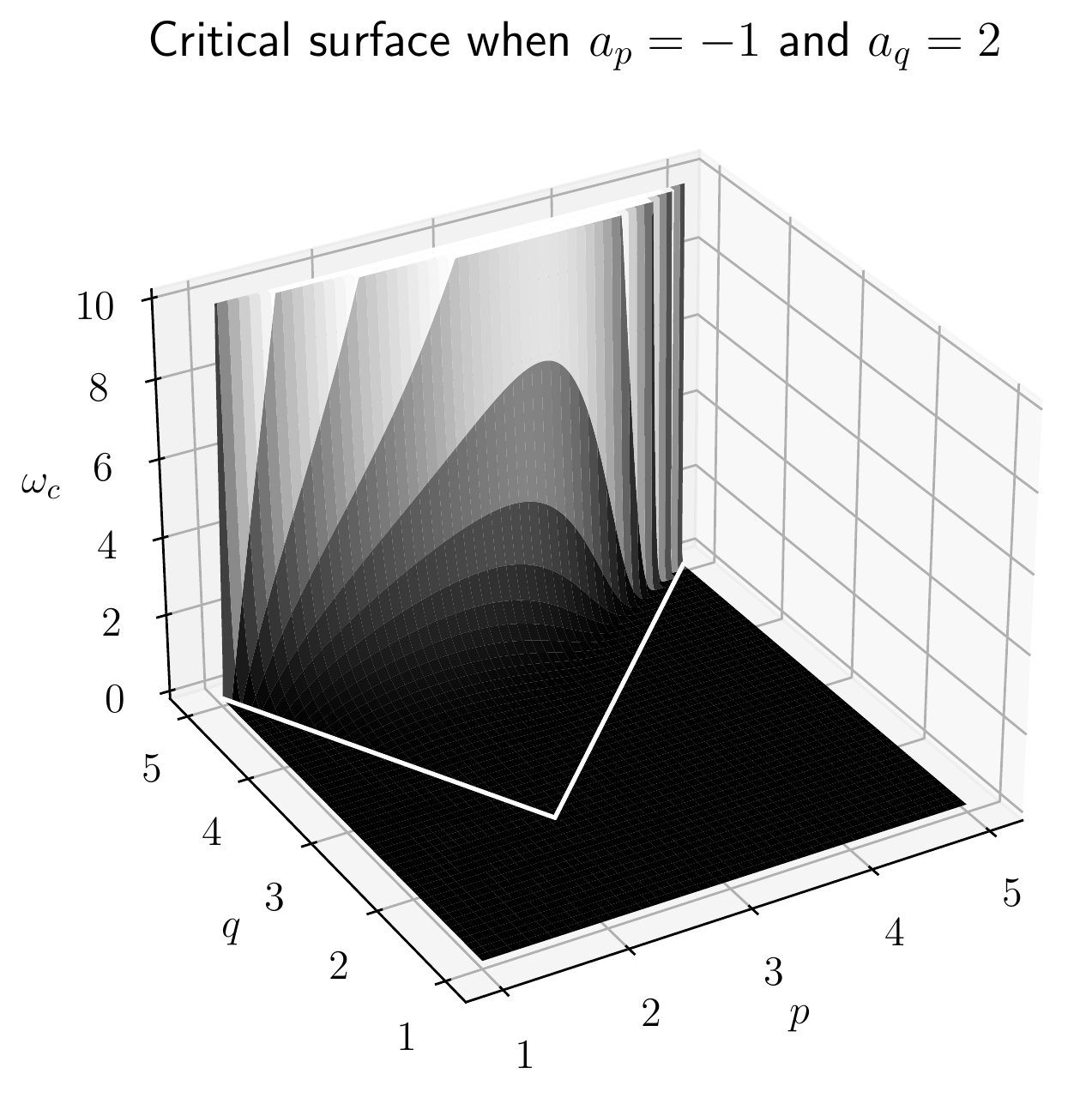}
  \caption[Frequency Critical Surface]{Critical surface $\{(p,q,\omega_c(p,q))\}$ for $a_p=-1$ and $a_q=1/2,1,2$. The white lines represent $q=7-2p$ and $q=p$, where the transition from $\omega_c=0$ to $\omega_c>0$ occurs.}
  \label{fig:critical_surface}
\end{figure}

Several observations can be made on the critical surface. As $(p,q)$ approaches the line $q=5$, we have $\omega_c(p,q)\to \infty$, which is consistent with the fact that standing waves are all unstable on this line.

It can be observed that on the line $q=7-2p$ the transition is continuous, no matter the value of $a_q$. To the contrary, the transition is continuous on the line $p=q$ when $a_q\geq 1$, whereas it becomes discontinuous when $a_q<1$, in which case $\omega_c(p,q)\to \infty$ as $q\to p$.

To investigate more the transition close to the lines $q=7-2p$ and $q=p$, we plot slices of the critical surface for a fixed value of $q$ in Figure \ref{fig:slices}. We chose to present the results when $q=4$, but similar results are obtained with other values of $q$. On Figure \ref{fig:slices}, we observe that when $a_q=|a_p|=1$, the transition between $\omega_c(p,q)=0$ and $\omega_c(p,q)>0$ at $q=p$ and $q=7-2p$ is Lipschitz. When $a_q=2>|a_p|=1$, the transition seems smoother (but closer observations will reveal otherwise) when $q=p$, whereas it remains Lipchitz when $q=7-2p$. To the contrary, when $a_q=1/2<|a_p|=1$,  the transition is discontinuous when $q=p$, whereas it seems smoother at $q=7-2p$.
\begin{figure}[htpb!]
  \centering
  \includegraphics[width=.3\textwidth]{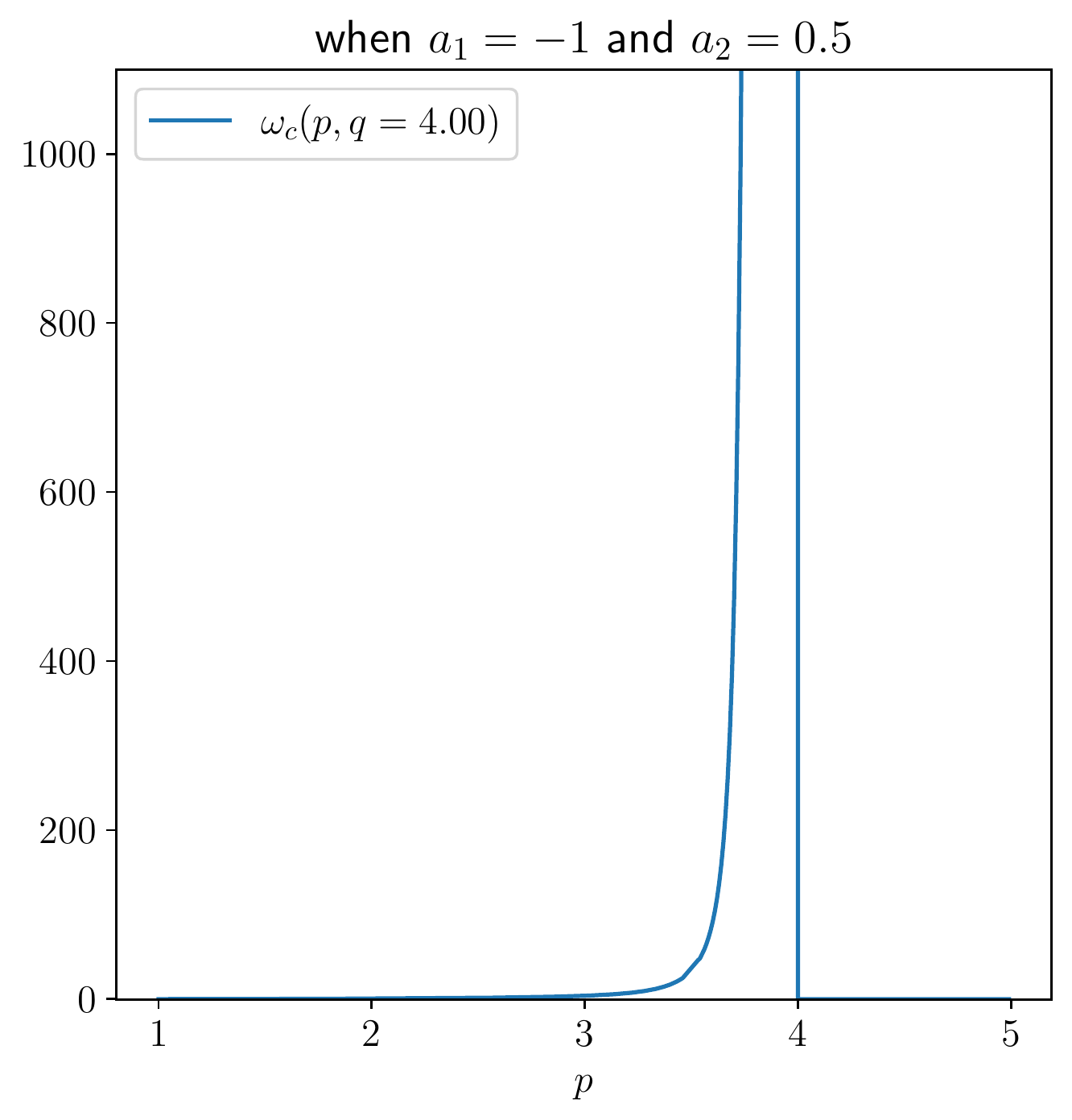}~
  \includegraphics[width=.3\textwidth]{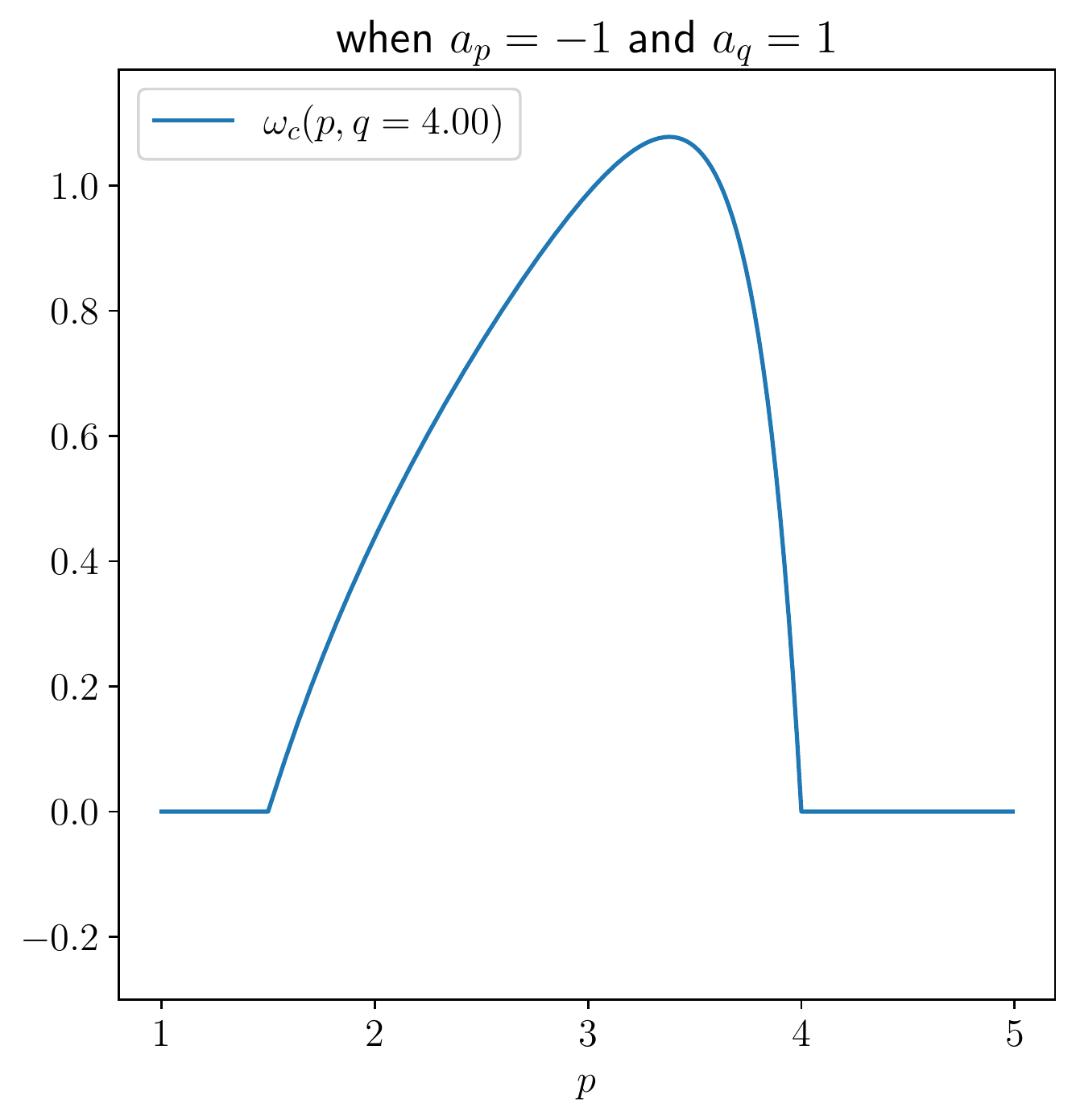}~
  \includegraphics[width=.3\textwidth]{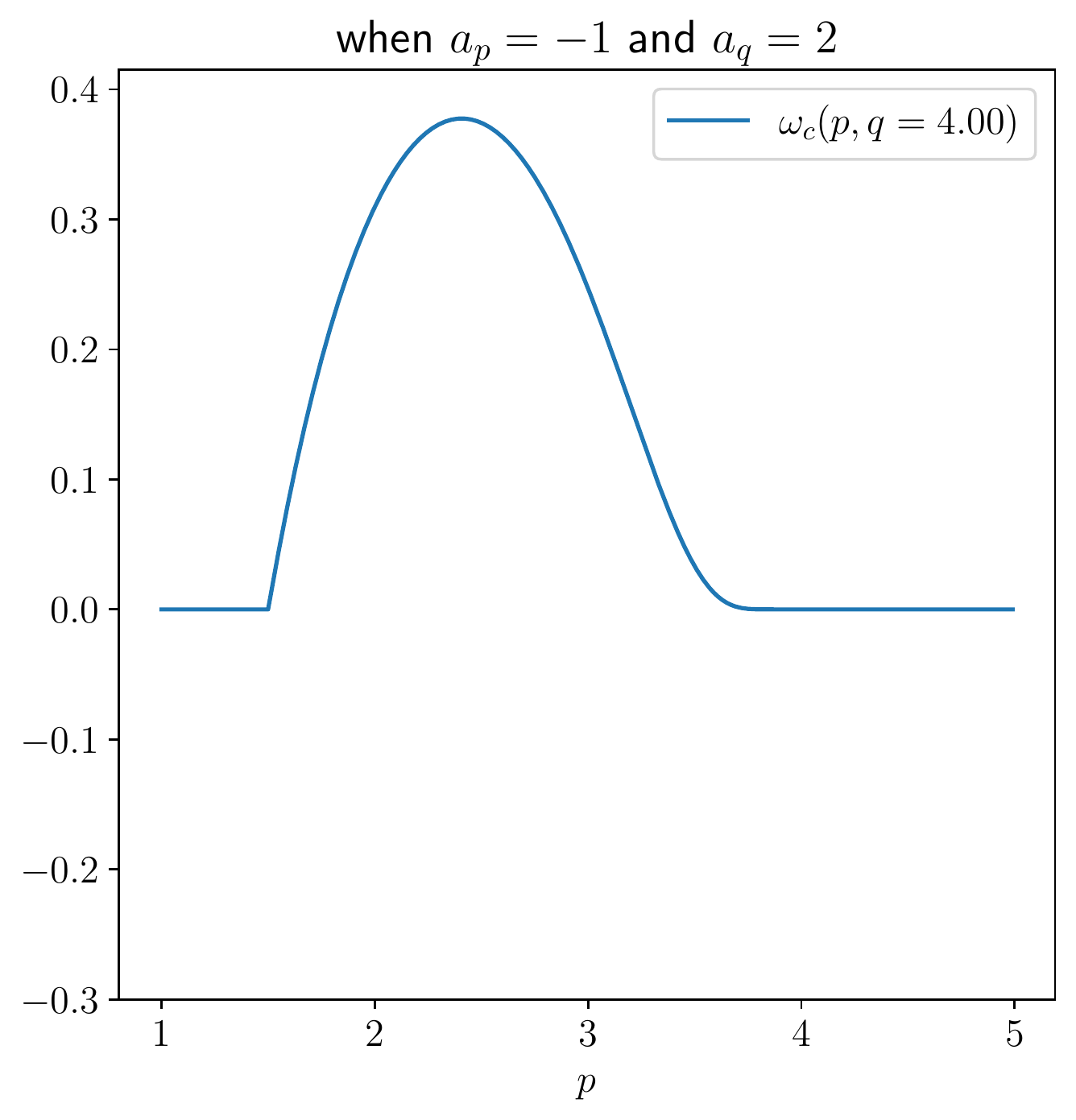}
  \caption[Slices of the critical surface]{Slices of the critical surface for fixed value of $q=4$}
  \label{fig:slices}
\end{figure}

To confirm our previous observations, we zoomed on the slices of Figure \ref{fig:slices} and obtained the results presented in Figure \ref{fig:slices_zoom}.
\begin{figure}[htpb!]
  \centering
  \includegraphics[width=.3\textwidth]{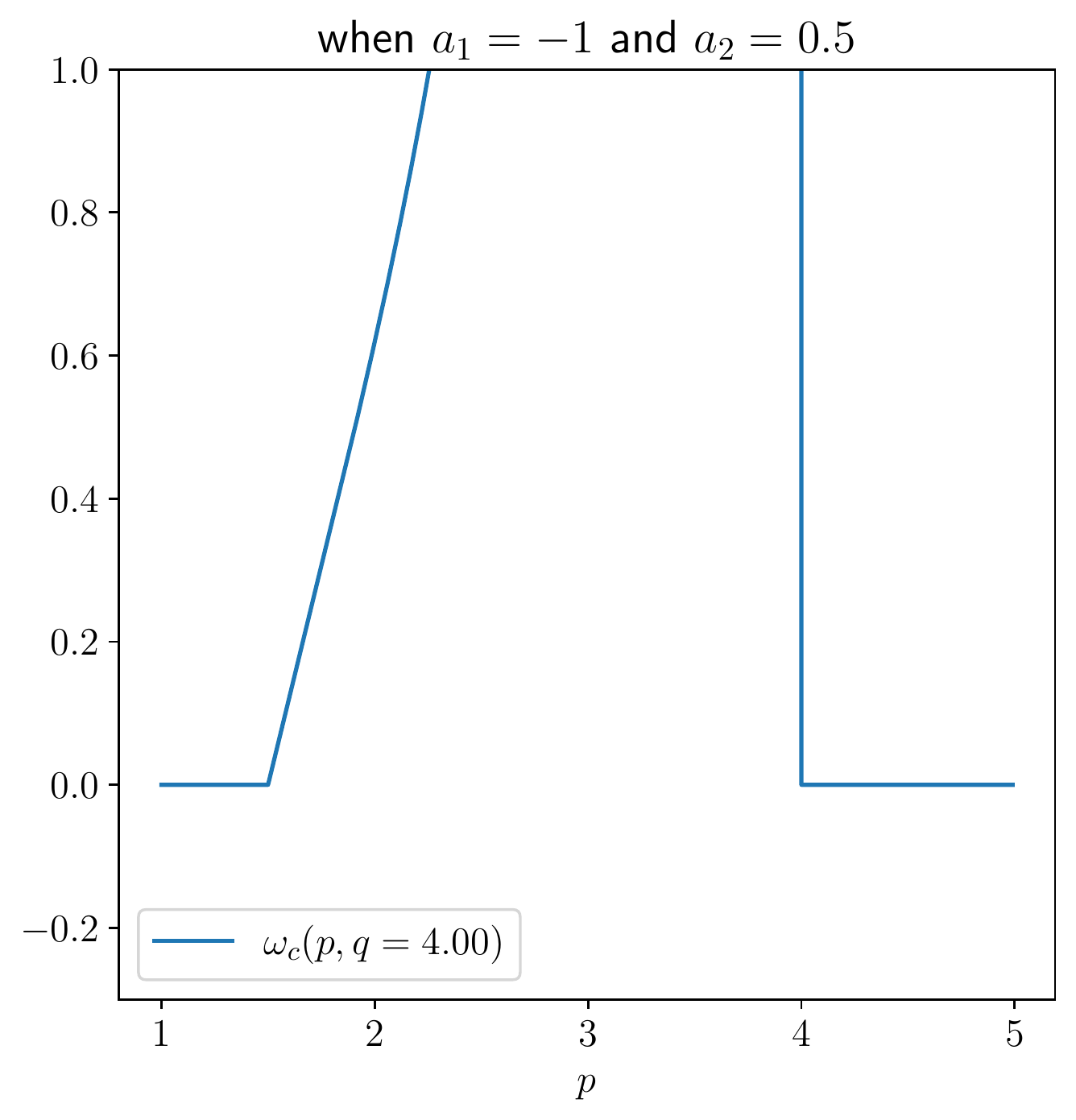}~
  \includegraphics[width=.3\textwidth]{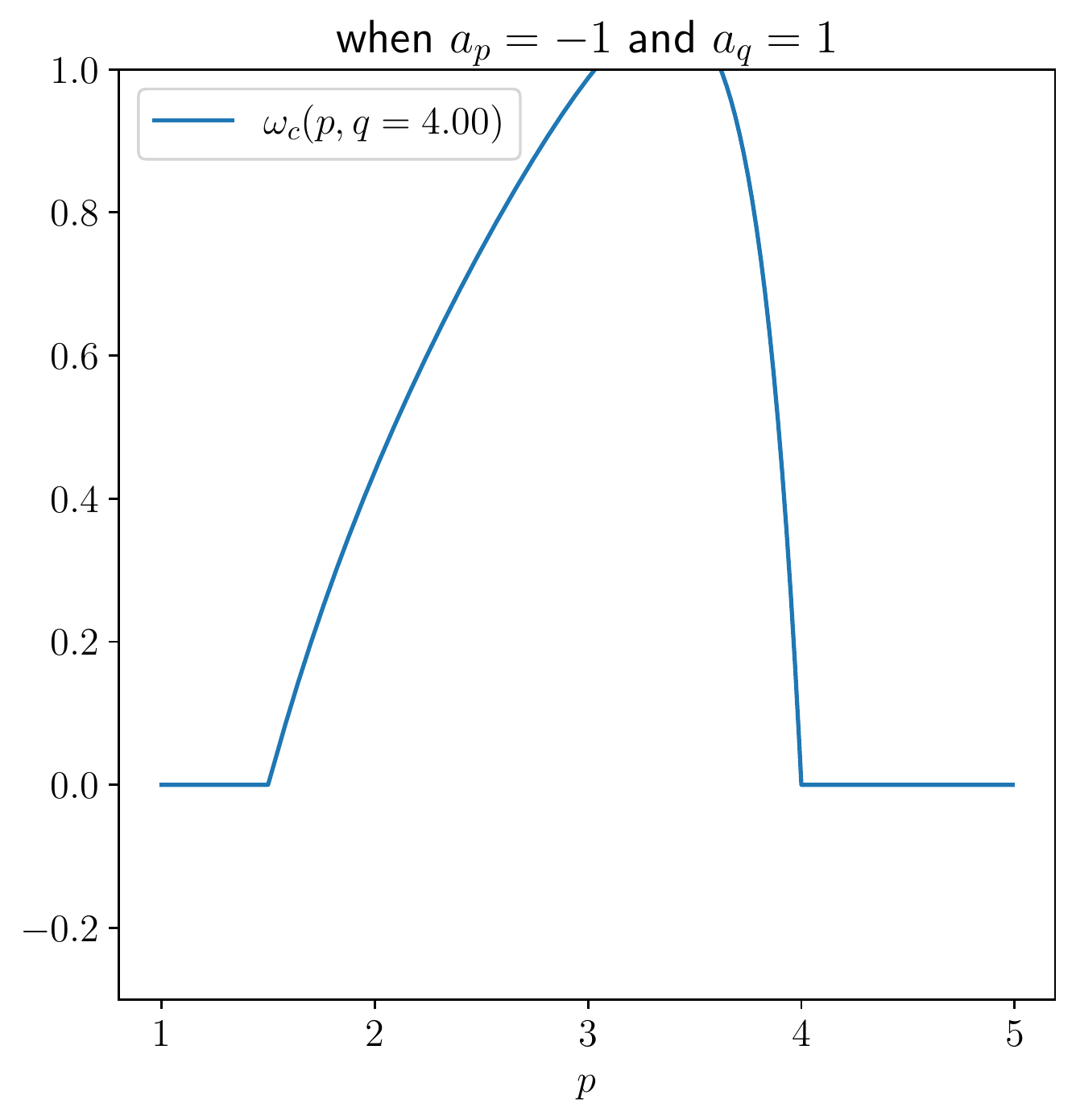}~
  \includegraphics[width=.3\textwidth]{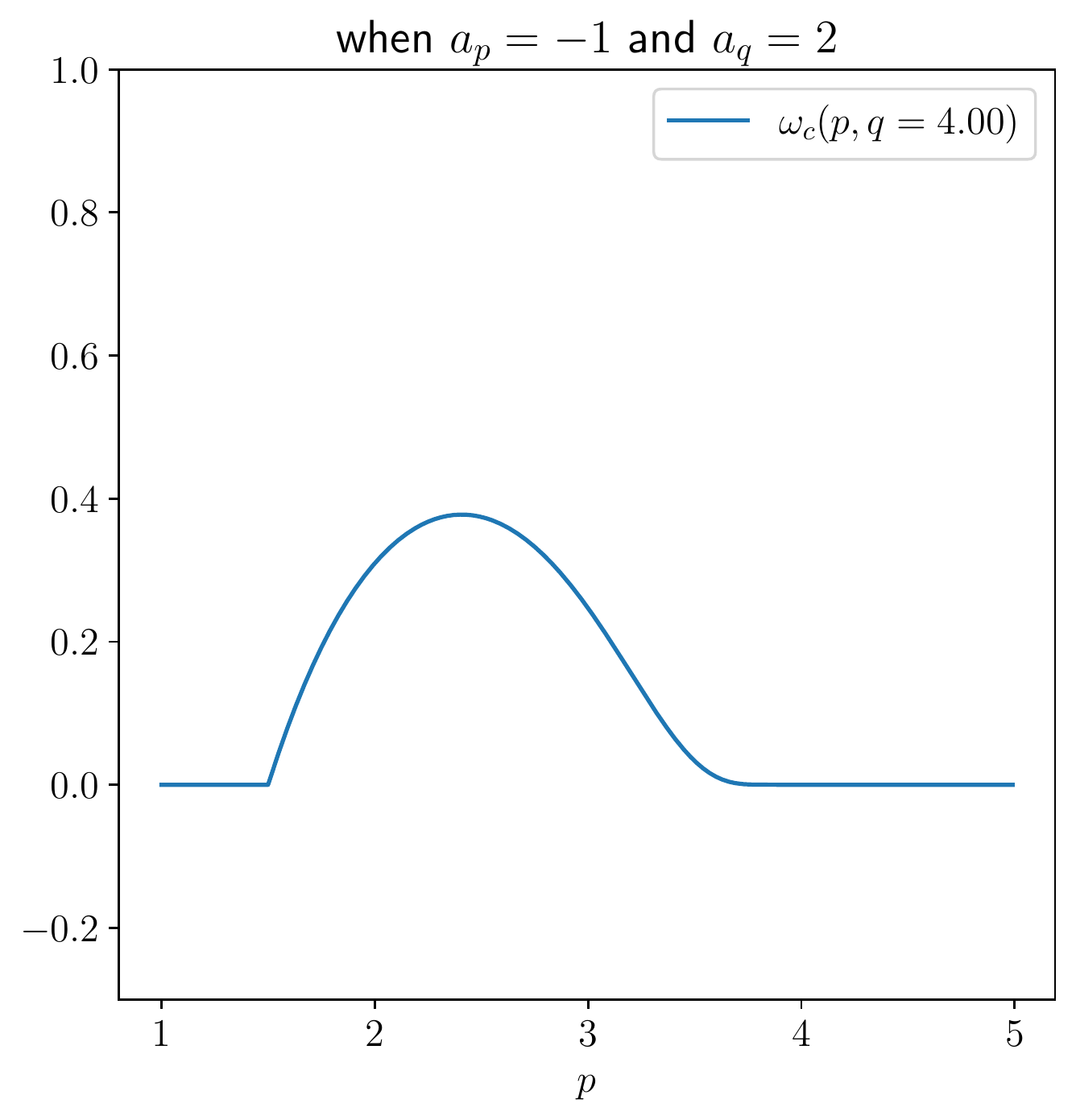}
  \caption[Slices of the critical surface]{Zoom on slices of the critical surface for fixed value of $q=4$}
  \label{fig:slices_zoom}
\end{figure}
Observing closer the transition from $\omega_c>0$ to $\omega_c=0$ on fixed $q$ slices of Figure \ref{fig:slices_zoom}, we realize that the transition on the left ($q=7-2p$) seems to be always only Lipschitz, contrary to what could be inferred from the previous observation. On the other hand, the previous observation when $p=q$ is confirmed:  the transition seems smooth when $a_q=2$, Lipschitz when $a_q=1$, and discontinuous when $a_q=1/2$. This is reflecting the fact that when $p\to q$, the family of soliton profiles has a different behavior for different values of $a_q$. When $a_q>|a_p|$, soliton profiles for $p=q$ exist and are stable (hence $\omega_c(p,q=p)=0$), whereas for $a_q=|a_p|$ the two nonlinearities exactly compensate and for $|a_p|>a_q$ the defocusing nonlinearity becomes the dominant one (and solitary waves do not even exist). 

From the previous observations, we know that at fixed $q$ the map $p\to \omega_c(p,q)$ has a unique maximum if $a_q=1$ or $a_q=2$ (if $a_q=1/2$, we have seen that the map increases towards infinity as $p$ approaches $q$). Denote by $p_{\max}(q)$ the value realizing this maximum, i.e.
\begin{equation*}
\omega_c(p_{\max}(q),q)=\max_{1<p<5}
\omega_c(p,q).
\end{equation*}
The line $\{(p_{\max}(q),q), q>7/3\}$ is represented in Picture \ref{fig:argmax}.
\begin{figure}[htpb!]
  \centering
  \includegraphics[width=.49\textwidth]{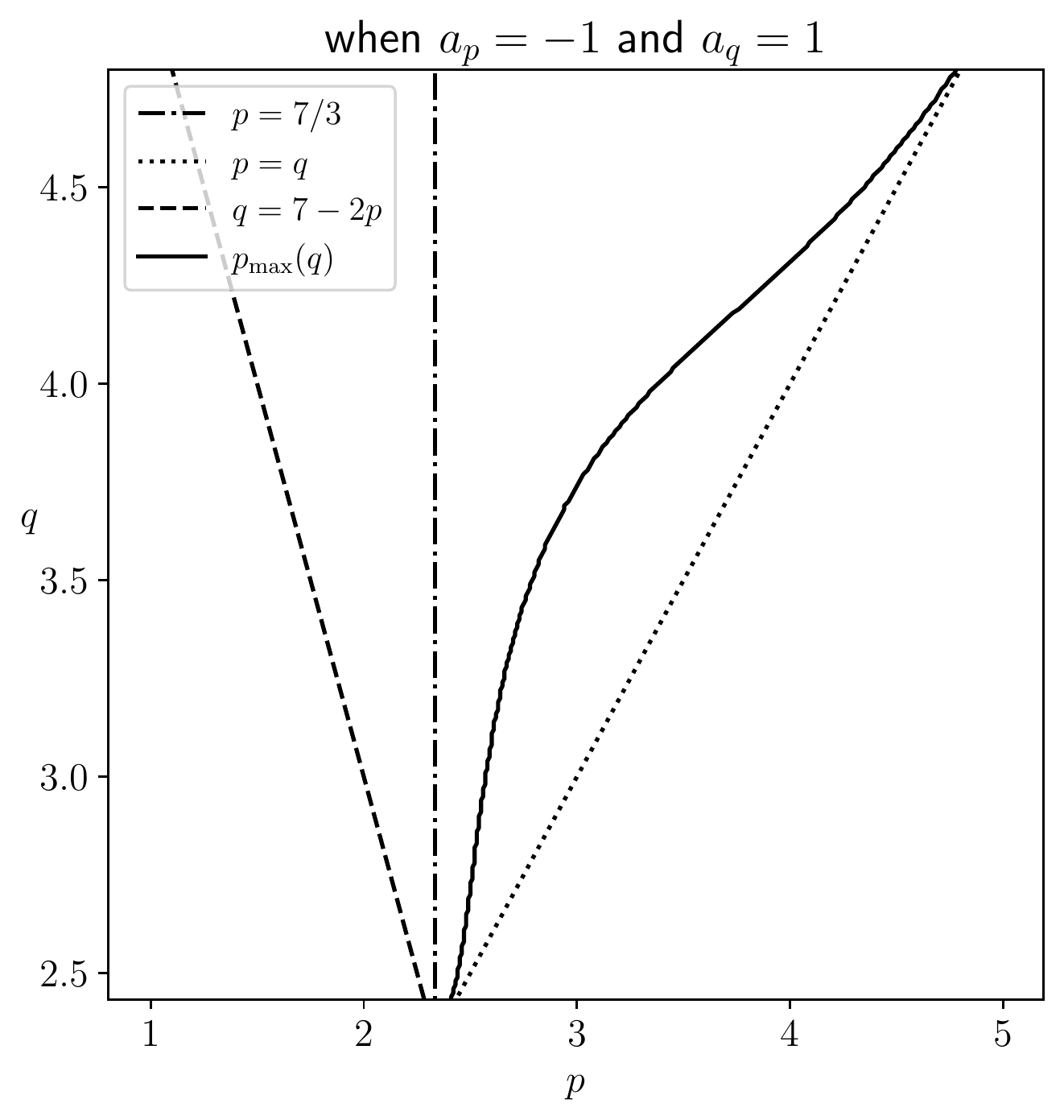}~
  \includegraphics[width=.49\textwidth]{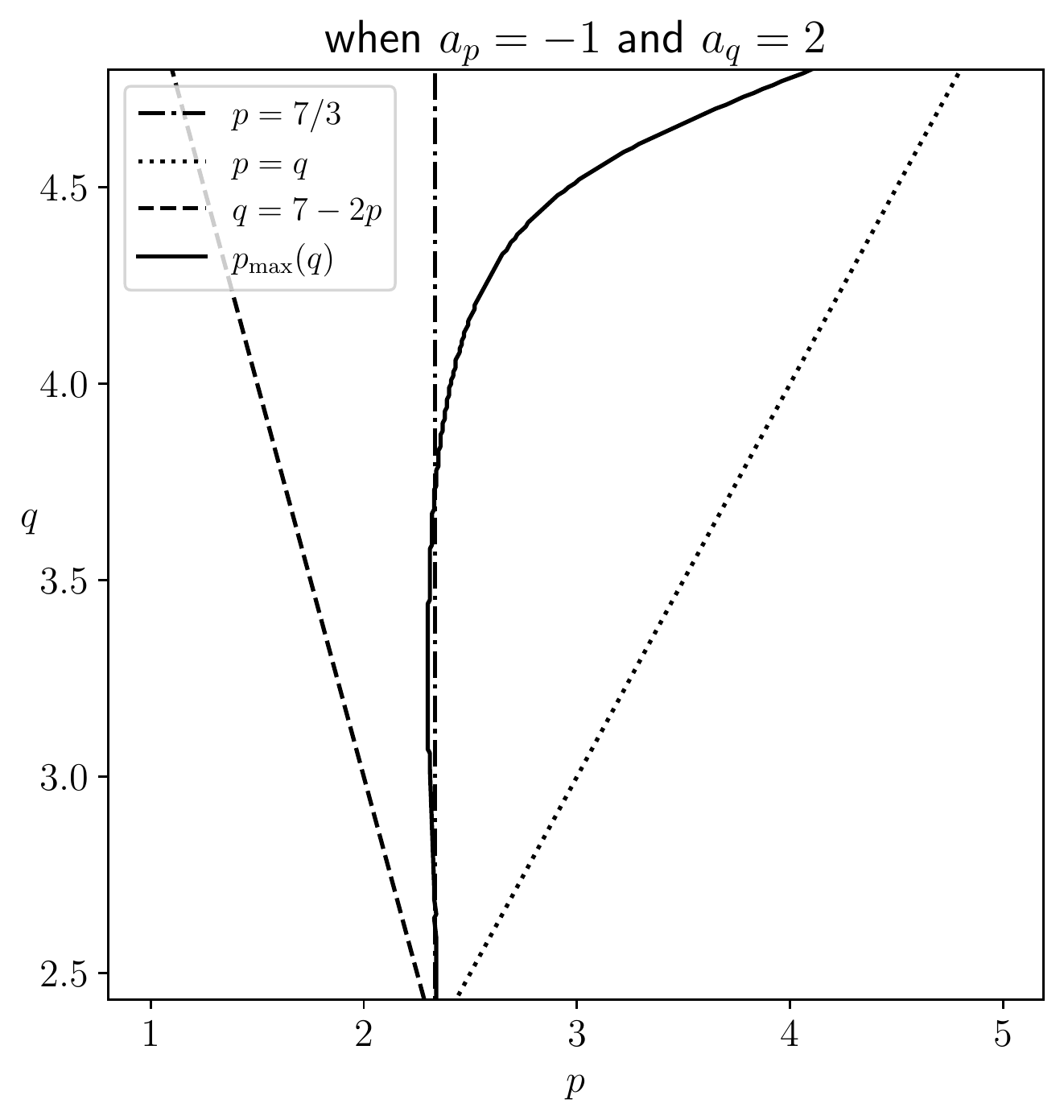}~
  \caption[Argmax curve]{Curve of the argument of $\max_p\omega_c(p,q)$ in terms of $q$}
  \label{fig:argmax}
\end{figure}
When $a_q=1$, we observe that the line is tangent to the line $p=q$ when $q$ is close to $7/3$ or close to $5$. On the other hand, when $a_q=2$, the line seems to be tangent to the line $p=7/3$ when $q$ is close to $7/3$. It approaches the point $(5,5)$ as $q$ goes to $5$, but does not seem to be tangent to the line $p=q$ (it was however not possible to obtain numerically a relevant picture closer to $q=5$, which leaves open the question of the behavior when $q$ is close to $5$).

\subsection{Evolution for initial data close to standing waves}

We now turn to numerical experiments for the stability/instability of solitary waves for the flow of \eqref{eq:nls}. For the experiments, we have used the Crank-Nicolson scheme with relaxation presented in \cite{Be04} which has been proved to be efficient for the numerical simulation of the Schr\"odinger flow (see e.g.~\cite{AnBaBe13} for the comparison of various schemes used for the dynamical simulations of the nonlinear Schr\"odinger flow).

For a time discretization step $\delta_t$ (typically $\delta_t=10^{-3}$), denote by $u^n$ the approximation of $u$ at time $t_n=n\delta_t$. The semi-discrete (in time) relaxation scheme is then given by
\[
  \begin{cases}
    \frac{\phi^{n+\frac12}+\phi^{n-\frac12}}{2}=a_p|u^n|^p+a_q|u^n|^q,\\
    i\frac{u^{n+1}-u^n}{\delta_t}+\partial_{xx}\left(\frac{u^{n+1}+u^n}{2}\right)=-\left(\frac{u^{n+1}+u^n}{2}\right)\phi^{n+\frac12},
  \end{cases}
\]
with the understanding that $u^0=u_0$ and $\phi^{-\frac12}=a_p|u^0|^p+a_q|u^0|^q$.
For the implementation, the scheme is further discretized in space with second order finite differences for the second derivative operator, with Dirichlet boundary conditions.

We have performed simulations for $(p,q)$ on the line $q=2p-1$, as for this range of exponents explicit formulas are available for solitary wave profiles (see e.g.~\cite{LiTsZw21}) and can be used easily to construct initial data. Considering other ranges of $(p,q)$ would have been possible, to the extend of additional computations to first obtain numerically solitary waves. As we do not expect different behavior to occur for other values of  $(p,q)$, the restriction to the line $q=2p-1$ is harmless.

The initial data that we construct are all based on a solitary wave profile $\phi_\omega$. They are of the form
\[
u_0=\phi_\omega+\eps\psi,
\]
where $0<\eps\ll 1$ is used to adjust the size of the perturbation and $\psi$ is the direction of perturbation, which can be for example
\[
\psi=\phi_\omega,\quad \psi=\phi_\omega \cos, \quad \psi=\phi_\omega  \tanh, \quad \psi= \phi_\omega (\cdot-3).
\]
As our numerical scheme uses Dirichlet conditions at the bounds of the space interval, we have chosen to work with well-localized perturbation in order to avoid possible numerical reflections due to the boundary conditions. 
Our experiments consisted in taking one of the previous possibility as initial data, running the simulation of the nonlinear Schr\"odinger flow, and observe the pattern of the outcome. It turns out that after running numerous simulations, we have observed only three possible types of behavior:
\begin{itemize}
\item Stability;
\item Growth followed by slightly decreasing oscillations;
  \item Dispersion.
  \end{itemize}
    Observe that our numerical results are in part similar to the ones obtained and discussed in further details  in \cite[Section 4]{CaKlSp20} in the case of the $2d$ cubic-quintic (focusing-defocusing) nonlinear Schr\"odinger equation.

  Stability means that the solution does not leave the neighborhood of $\phi_\omega$ (up to phase shift and translations). We obviously expect to see this behavior in the cases where the values of the parameters $p$, $q$, and $\omega$ ensure that the solitary wave will be stable. However, one thing which is not easily decided by the theory is the size of the basin of stability of the solitary wave. In other words, finding a perturbation of the solitary wave sufficiently large to be visible, but small enough so that the corresponding solution remains in the vicinity of the solitary wave requires delicate adjustments. 

  An example of a stable behavior is provided in Figure \ref{fig:stable}. Observe that while on the global scale the solution seems to be behave exactly as a solitary wave (left picture), when getting a closer look at the maximum value (right picture) we observe small oscillations (with an amplitude of order $0.03$).
 \begin{figure}[htpb!]
   \centering \includegraphics[width=.48\textwidth]{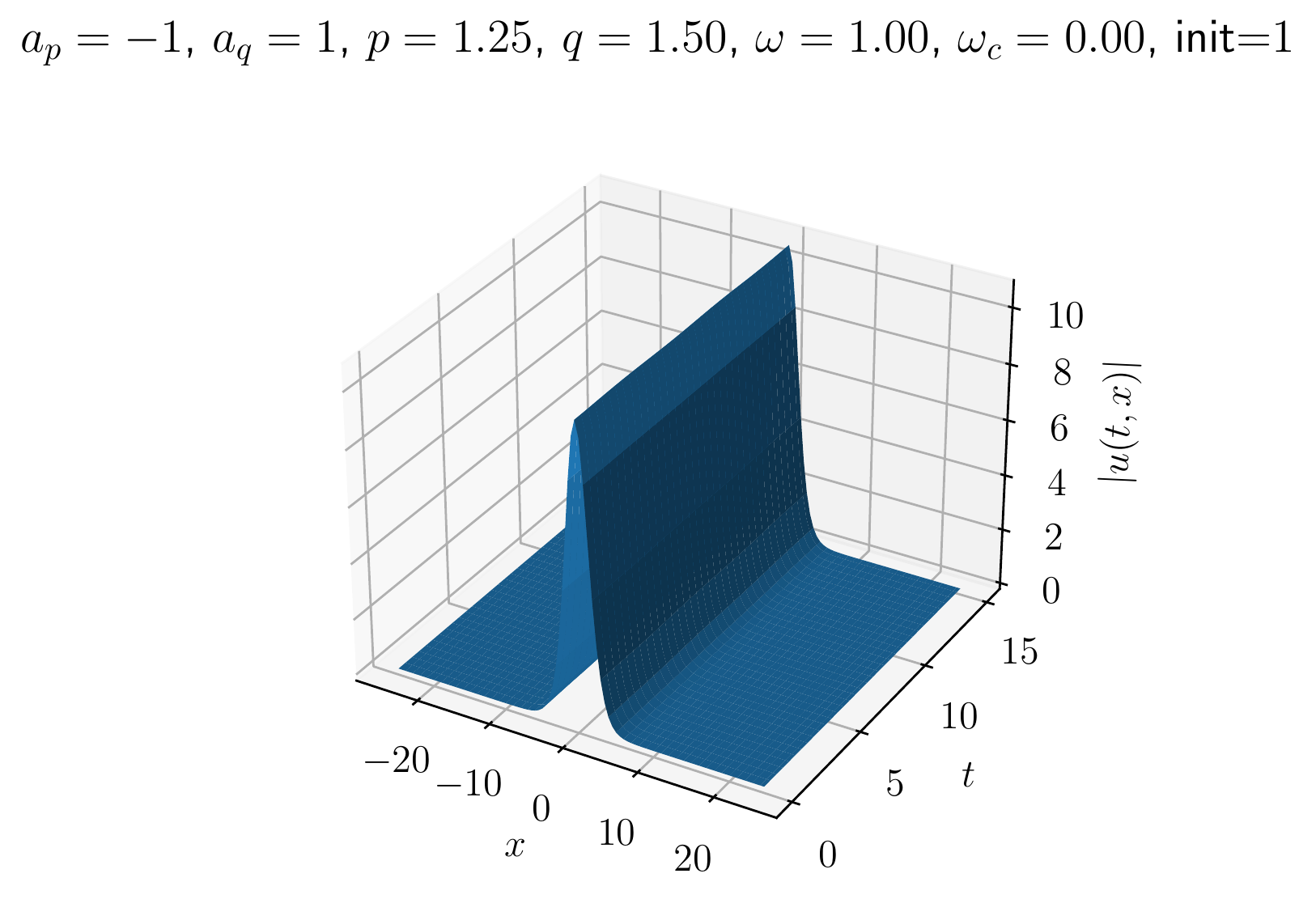}
   ~\includegraphics[width=.38\textwidth]{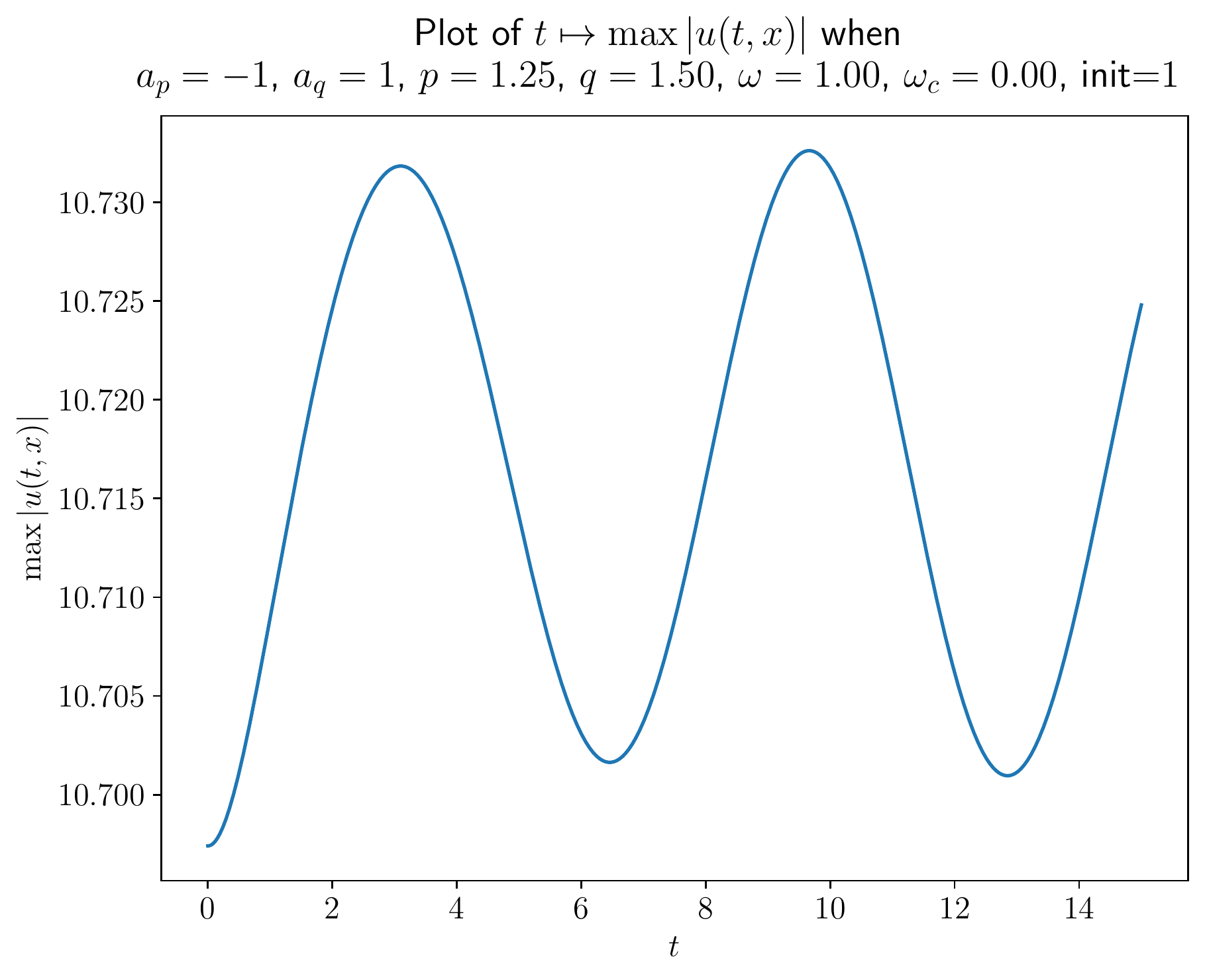}
   \caption{Example of a stable numerical solution. The initial data is $u^0=(1+\eps)\phi_\omega$, $\eps=10^{-2}$.}
  \label{fig:stable}
\end{figure}

The second behavior consists in a first phase of focusing growth of the profile, which is similar to what can be observed when instability of solitons is by blow-up (e.g.~for power-type supercritical nonlinearities. However, after a certain time, the focusing phase stops and is followed by a phase in which the solution seems to oscillate  around another profile. The size of the oscillation is decaying, but at a slow pace, and we have not run the simulation long enough to observe convergence toward a final state.  An example of such a behavior is presented in Figure \ref{fig:growth}.
 \begin{figure}[htpb!]
   \centering \includegraphics[width=.48\textwidth]{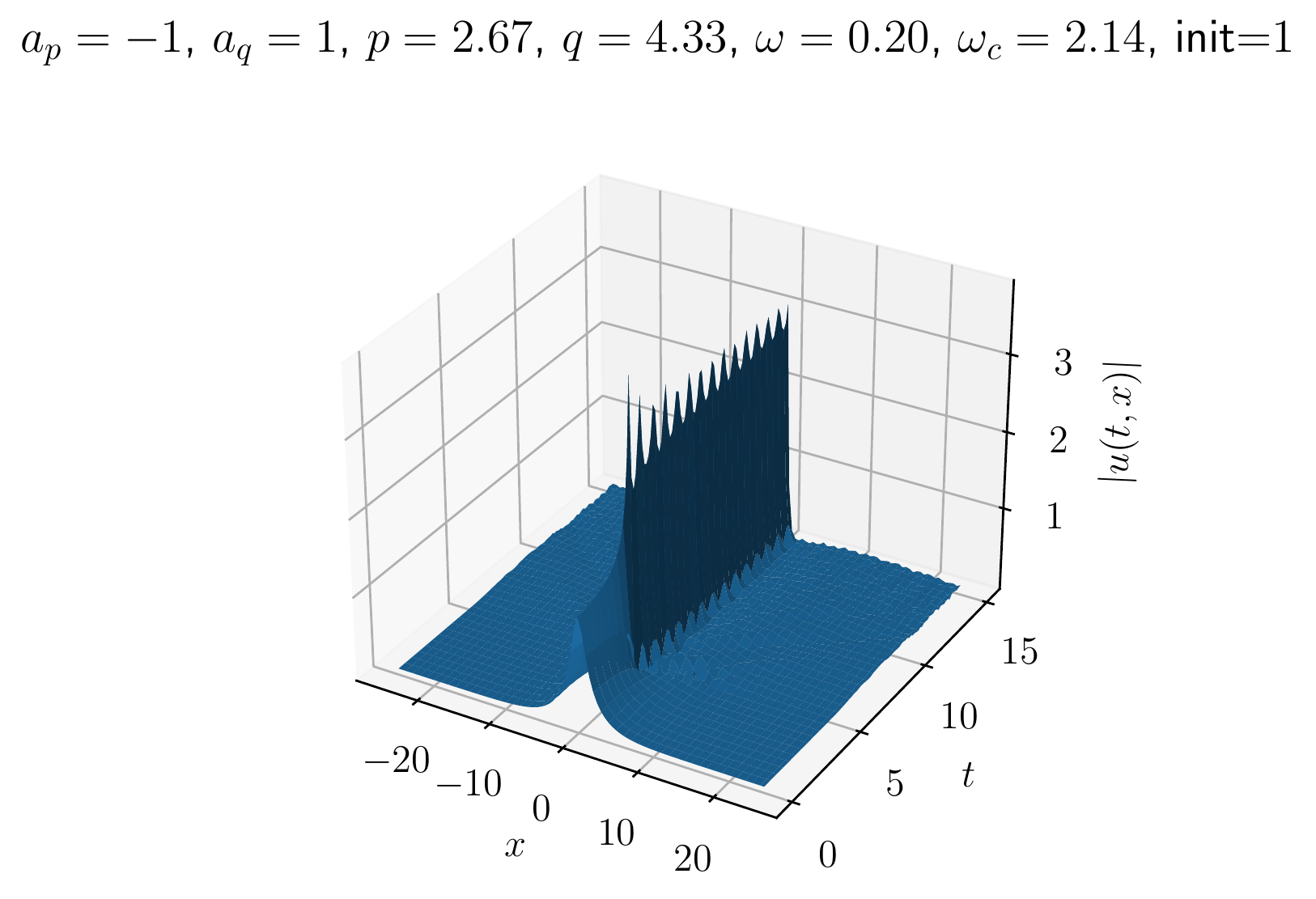}
   ~\includegraphics[width=.38\textwidth]{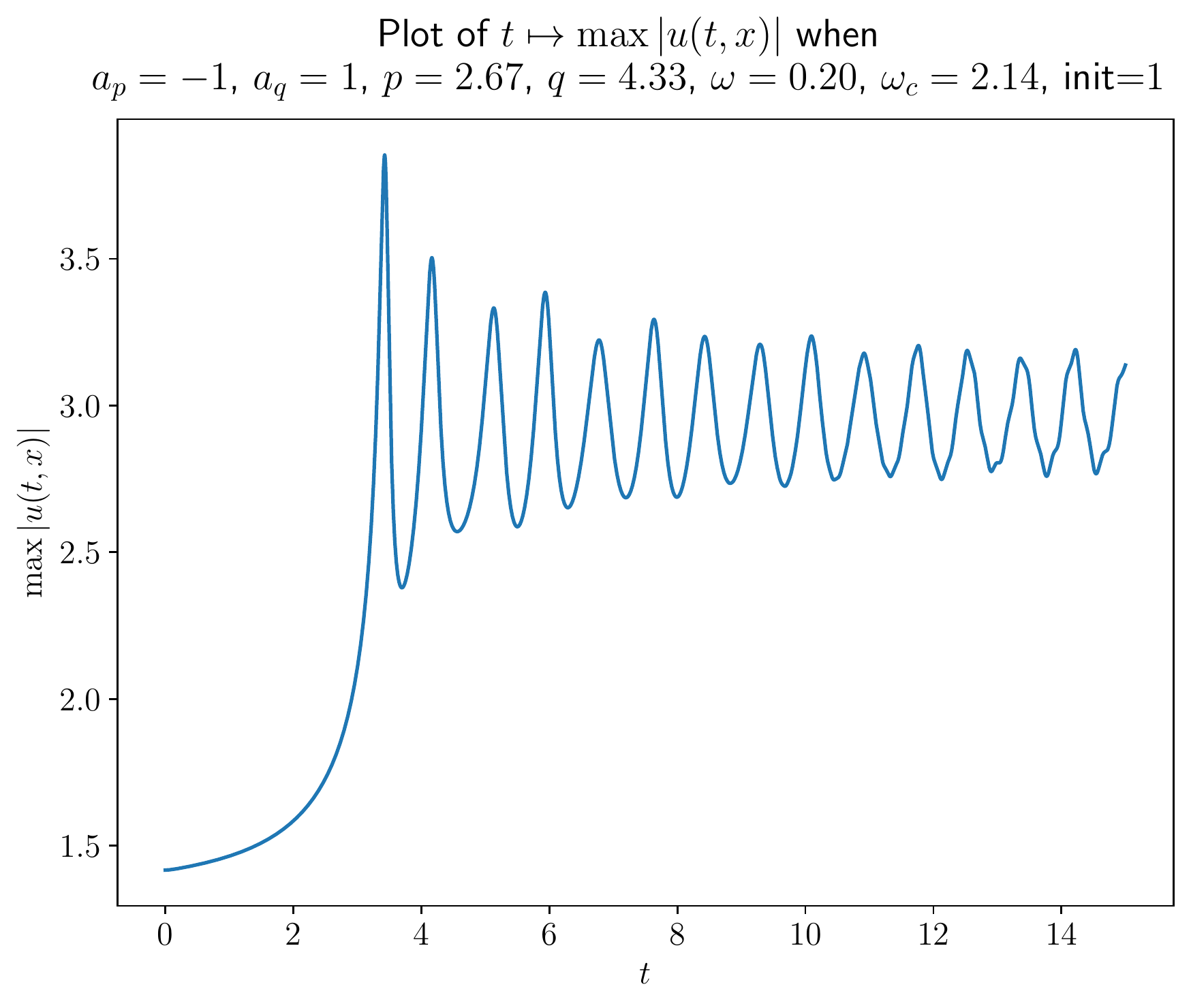}
   \caption{Example of a growing/oscillating numerical solution. The initial data is $u^0=(1+\eps)\phi_\omega$, $\eps=10^{-2}$.}
  \label{fig:growth}
\end{figure}

Finally, the third behavior that we have observed could be characterized as scattering, as the profile of the solution is simultaneously decreasing in height while spreading over the whole line. As before,  the decay is rather slow and we have not run the simulation long enough for the solution to converge to $0$.  An example of such a behavior is presented in Figure \ref{fig:scatter}. Observe that the domain of calculation is $[-50,50]$, but the solution is represented only on $[-20,20]$, which explains the non-zero values observed at the boundaries on the left figure. 
 \begin{figure}[htpb!]
   \centering \includegraphics[width=.48\textwidth]{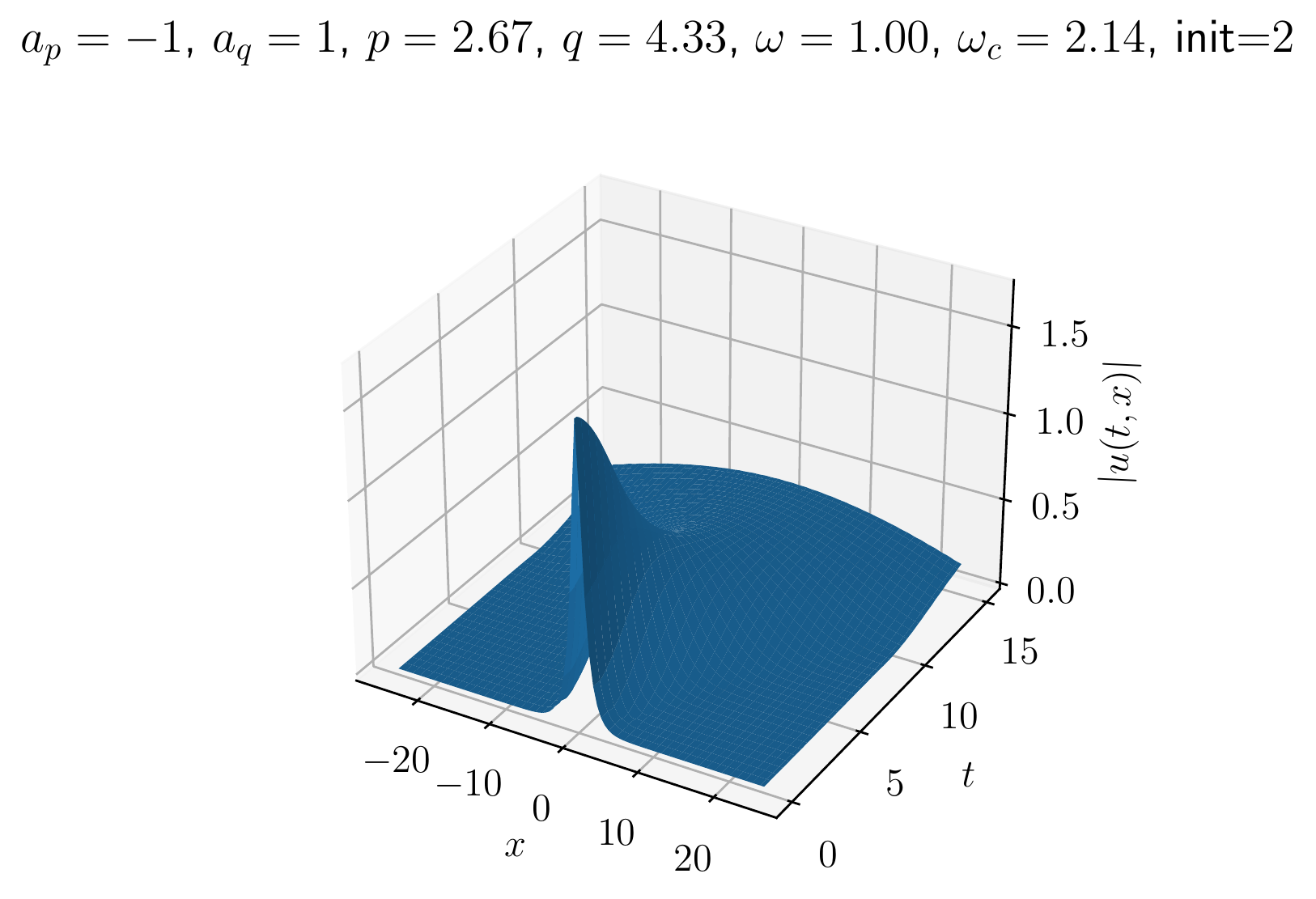}
   ~\includegraphics[width=.38\textwidth]{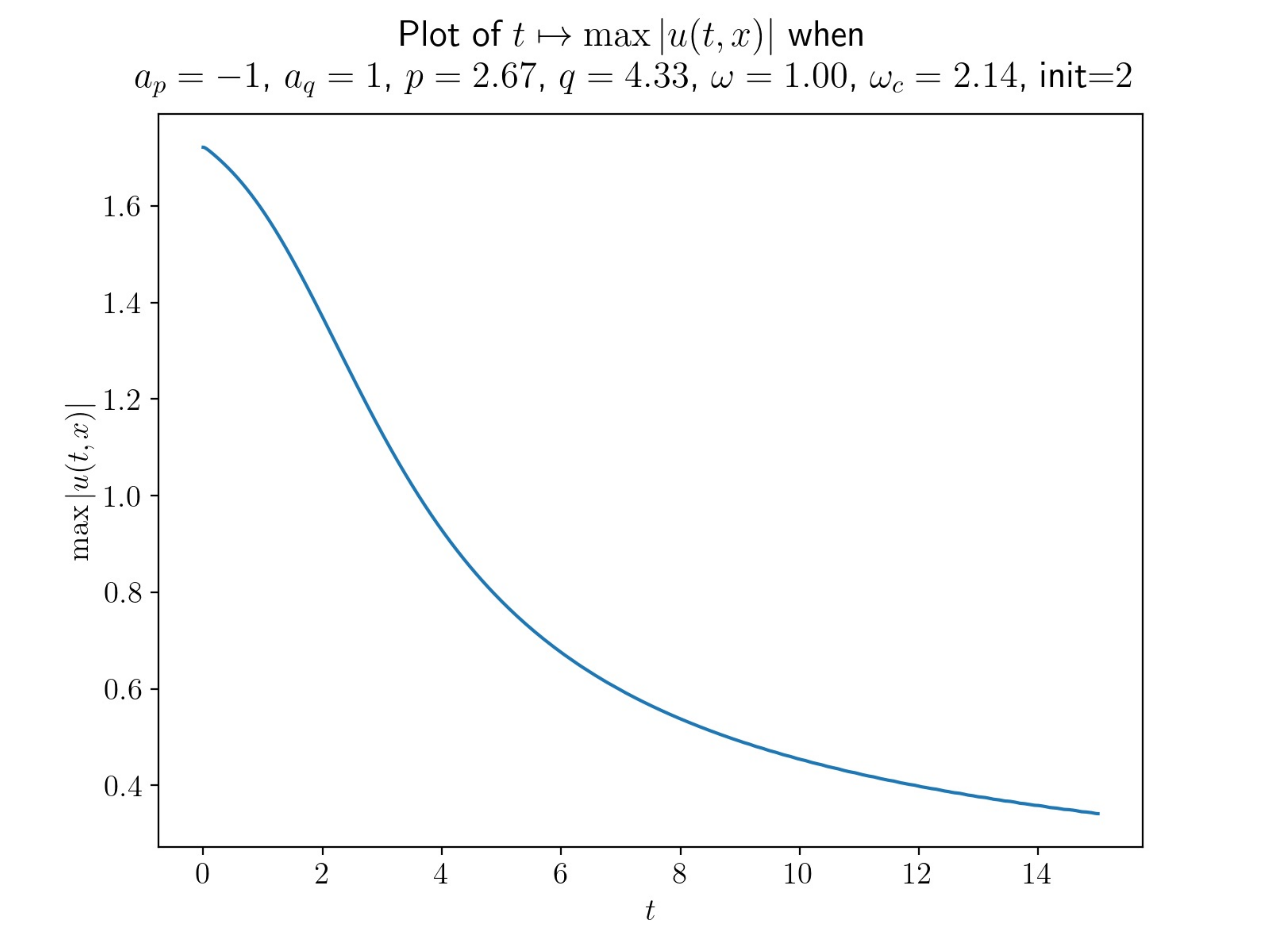}
   \caption{Example of a scattering numerical solution. The initial data is $u^0=(1-\eps)\phi_\omega$, $\eps=10^{-2}$.}
  \label{fig:scatter}
\end{figure}

\FloatBarrier

\bibliographystyle{abbrv}
\bibliography{kfoury-lecoz-tsai}

\end{document}